\newtheorem{theorem}{Theorem}[section]
\newtheorem{proposition}[theorem]{Proposition}
\newtheorem{corollary}[theorem]{Corollary}
\newtheorem{lemma}[theorem]{Lemma}
\theoremstyle{definition}
\newtheorem{remark}[theorem]{Remark}
\newtheorem{conjecture/question}[theorem]{Conjecture/Question}
\newtheorem{remark/definition}[theorem]{Remark/Definition}
\newtheorem{terminology/notation}[theorem]{Terminology/Notation}
\def\GG{{\textbf G}}
\def\PP{{\textbf P}}
\def\FF{{\textbf F}}
\def\OO{\mathcal{O}}
\def\cA{\mathcal{A}}
\def\F{\mathcal{F}}
\def\E{\mathcal{E}}
\def\G{\mathcal{G}}
\def\X{\mathcal{X}}
\def\I{\mathcal{I}}
\def\cM{\mathcal{M}}
\def\cC{\mathcal{C}}
\def\mm{\overline{\mathcal{M}}}
\DeclareMathOperator{\codim}{codim}
\DeclareFontFamily{OT1}{pzc}{}
\DeclareFontShape{OT1}{pzc}{m}{it}{<-> s * [1.10] pzcmi7t}{}
\DeclareMathAlphabet{\mathpzc}{OT1}{pzc}{m}{it}
\begin{document}
\title{\small The universal K3 surface of genus 14 via cubic fourfolds}

\author[G. Farkas]{Gavril Farkas}

\address{Humboldt-Universit\"at zu Berlin, Institut f\"ur Mathematik,  Unter den Linden 6
\hfill \newline\texttt{}
 \indent 10099 Berlin, Germany} \email{{\tt farkas@math.hu-berlin.de}}
\thanks{}

\author[A. Verra]{Alessandro Verra}
\address{Universit\`a Roma Tre, Dipartimento di Matematica, Largo San Leonardo Murialdo \hfill
 \newline \indent 1-00146 Roma, Italy}
 \email{{\tt
verra@mat.uniroma3.it}}

\begin{abstract}
Using Hassett's isomorphism between the Noether-Lefschetz moduli space $\cC_{26}$ of special cubic fourfolds $X\subset \PP^5$ of discriminant $26$ and the moduli space
$\F_{14}$ of polarized $K3$ surfaces of genus $14$, we use the family of 3-nodal scrolls of degree seven in $X$ to show that the universal $K3$ surface over $\F_{14}$ is rational.
\end{abstract}

\maketitle

\section{Introduction}

For a very general cubic fourfold $X\subseteq \PP^5$, the lattice $A(X):=H^{2,2}(X)\cap H^4(X, \mathbb Z)$ of middle Hodge classes contains only classes of complete intersection surfaces, so
$A(X)=\langle h^2 \rangle$, where $h\in \mbox{Pic}(X)$ is the hyperplane class (see \cite{V}). Hassett, in his influential paper \cite{H1}, initiated the study of Noether-Lefschetz special cubic fourfolds.  If $\cC$ is the $20$-dimensional coarse moduli space of smooth cubic fourfolds $X\subseteq \PP^5$, let $\cC_d$ be the locus of \emph{special} cubic fourfolds $X$ characterized by the existence of an embedding of a saturated rank $2$ lattice
$$L:=\langle h^2, [S]\rangle \hookrightarrow A(X),$$
of discriminant $\mbox{disc}(L)=d$, where $S\subseteq X$ is an algebraic surface not homologous to a complete intersection. Hassett \cite{H1} showed that $\cC_d\subseteq \cC$ is an irreducible divisor, which is nonempty if and only if $d>6$ and $d\equiv 0,2 (\mbox{mod } 6)$. The study of the divisors $\cC_d$ for small $d$ has received considerable attention. For instance, $\cC_8$ consists of cubic fourfolds containing a plane, whereas $\cC_{14}$ corresponds to cubic fourfolds containing a quintic del Pezzo surface, see \cite{H2}. Relying on Fano's work \cite{Fa}, recently Bolognesi and Russo \cite{BR} have shown that all fourfolds $[X]\in \cC_{14}$ are rational.

\vskip 4pt

For every $[X] \in \cC$, we denote by $F(X):=\bigl\{\ell\in \GG(1,5):\ell\subseteq X\bigr\}$ the Hilbert scheme of the lines contained in $X$. It is well known \cite{BD} that $F(X)$ is a hyperk\"ahler fourfold deformation equivalent to the Hilbert square of a $K3$ surface. For discriminant $d=2(n^2+n+1)$, where $n\geq 2$, it is shown in \cite{H1} that $F(X)$ is \emph{isomorphic} to the Hilbert scheme $S^{[2]}$ of a polarized $K3$ surface $(S,H)$ with  $H^2=d$.
If $\F_g$ denotes the moduli space of polarized $K3$ surfaces of genus $g$, the previous assignment induces a rational map $$\F_{\frac{d}{2}+1}\dashrightarrow \cC_d,$$ which is a birational isomorphism for $d\equiv 2 (\mbox{mod } 6)$ and a degree $2$ cover for $d\equiv 0 (\mbox{mod } 6)$. This map, though non-explicit for it is defined at the level of moduli spaces of weight-$2$ Hodge structures, opens the way to the study of $\F_{n^2+n+2}$ via the concrete geometry of cubic fourfolds, without making a direct reference to $K3$ surfaces! The main result of this paper concerns the universal $K3$ surface $\F_{g,1}\rightarrow \F_g$.

\begin{theorem}\label{main1}
The universal $K3$ surface $\F_{14,1}$ of genus $14$ is rational.
\end{theorem}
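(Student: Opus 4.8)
The plan is to transport the problem to the moduli space $\cC_{26}$ of special cubic fourfolds through Hassett's birational isomorphism $\F_{14}\dashrightarrow\cC_{26}$, which is available precisely because $26\equiv 2\,(\mathrm{mod}\ 6)$ and $26=2(n^2+n+1)$ with $n=3$, so that $F(X)\cong S^{[2]}$ for the polarized $K3$ surface $(S,H)$ of genus $14$ associated to a general $[X]\in\cC_{26}$. The geometric heart of the argument is the claim that, for such a general $X$, the $3$-nodal septic scrolls $\Sigma\subset X$ sweep out an irreducible, reduced, two-dimensional family that is birational to $S$ itself. Granting this, the incidence variety $\mathcal{P}:=\{(X,\Sigma):\Sigma\subset X\text{ a }3\text{-nodal septic scroll}\}$ fibers over $\cC_{26}$ with general fiber birational to $S$, so that the assignment $(X,\Sigma)\mapsto(S,p)$ — where $p\in S$ records the scroll $\Sigma$ under the identification of the fiber with $S$ — realizes $\mathcal{P}$ as a variety birational to the universal $K3$ surface $\F_{14,1}$. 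It therefore suffices to prove that $\mathcal{P}$ is rational.

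To parametrize $\mathcal{P}$ I would first study the $3$-nodal septic scrolls in $\PP^5$ in their own right, describing them concretely through their ruling data (equivalently, as projections of smooth rational normal septic scrolls, which span $\PP^8$ and form a single transparent $\PGL_9$-family) and establishing that the parameter space $\mathcal{H}$ of such scrolls is irreducible and rational, with the expected dimension. For a fixed scroll $\Sigma$ the cubic fourfolds containing it form the linear system $\PP\bigl(H^0(\PP^5,\I_{\Sigma}(3))\bigr)$; here I would compute its dimension via Riemann--Roch and the relevant vanishing, check that a general member is smooth and lies in $\cC_{26}$, and deduce that the total space $\widetilde{\mathcal{P}}$ of pairs $(\Sigma,X)$ with $\Sigma\subset X$ is a projective bundle over $\mathcal{H}$, hence rational.

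Finally, two scrolls yield the same point of $\cC_{26}$ only through a projective automorphism, so $\mathcal{P}$ is birational to the quotient $\widetilde{\mathcal{P}}/\PGL_6$. To descend rationality through this quotient I would verify that $\PGL_6$ acts on the pairs $(\Sigma,X)$ with trivial generic stabilizer and then, via a no-name or slice argument, exhibit $\widetilde{\mathcal{P}}/\PGL_6$ as a projective bundle over the rational base $\mathcal{H}/\PGL_6$ of scrolls modulo projectivities; rationality of $\widetilde{\mathcal{P}}/\PGL_6$ follows, and with it the rationality of $\F_{14,1}$.

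The hard part will be the geometric identification underlying the first paragraph: proving that the $3$-nodal septic scrolls in a general $X\in\cC_{26}$ really do form a two-dimensional family birational to $S$. This requires exhibiting a natural map from this family to $S$ — for instance through the classes the scrolls and their rulings cut out in $F(X)\cong S^{[2]}$ — and checking it has degree one, as well as ensuring that the identification varies algebraically with $X$, so that $\mathcal{P}\dashrightarrow\F_{14,1}$ is genuinely birational and not merely dominant. A secondary difficulty is upgrading the unirationality that is manifest from $\widetilde{\mathcal{P}}$ to rationality of the $\PGL_6$-quotient, which demands precise control of stabilizers and of the bundle structure over the moduli of scrolls.
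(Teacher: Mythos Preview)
Your architecture matches the paper's exactly: Hassett's isomorphism $\F_{14}\dashrightarrow\cC_{26}$, the identification of the family of $3$-nodal septic scrolls in a general $X$ with the associated $K3$ surface $S$ via the curves $\Delta_p\subset S^{[2]}\cong F(X)$, the incidence variety $\mathfrak{X}$ as a $\PP^{12}$-bundle over the moduli $\mathfrak{H}_{\mathrm{scr}}$ of $3$-nodal septic scrolls, and the reduction to rationality of $\mathfrak{H}_{\mathrm{scr}}$. The paper also parametrizes the scrolls as projections from the smooth scroll $R'=S_{3,4}\subset\PP^8$, obtaining $\mathfrak{H}_{\mathrm{scr}}\cong \mathrm{Sym}^3(\mathrm{Sec}(R'))/\mathrm{Aut}(R')$.

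The gap in your proposal is precisely at the point you flag as a ``secondary difficulty'': you assert that $\mathcal{H}/\PGL_6$ is rational, but the projection description you sketch only yields \emph{unirationality} --- a quotient of $\mathrm{Sym}^3$ of a $5$-fold by a $6$-dimensional group is not visibly rational, and no-name/slice arguments do not help here because the issue is not the bundle over the base but the base itself. The paper's decisive new idea, which you are missing, is a liaison construction inside $\GG(1,5)$: the curve of rulings $\Gamma_R\subset\GG(1,5)$ of $R$, together with its three bisecant lines $L_1,L_2,L_3$ (corresponding to the nodes), spans a $\PP^7$, and by Mukai's theory of genus $8$ canonical curves the intersection $\langle\Gamma_R\rangle\cdot\GG(1,5)$ is a stable curve of genus $8$; the residual component $Q$ is a smooth rational quartic whose associated scroll $R_Q\cong S_{2,2}$ carries three distinguished rulings and points. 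Reversing this residuation, after rigidifying $R_Q$ and the three points via $\PGL_6$, identifies $\mathfrak{H}_{\mathrm{scr}}$ birationally with $\PP^3\times\PP^3\times\PP^3/\mathfrak{S}_3$, which is rational by the theorem on symmetric functions. Without this (or an equivalent device), your outline proves only that $\F_{14,1}$ is unirational.
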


Nuer \cite{Nu} proved that $\cC_{26}$ (and hence $\F_{14}$ as well) is unirational. His proof relies on the fact that a general fourfold $[X]\in \cC_{26}$ contains certain smooth rational surfaces, whose parameter space forms a unirational family. One can also show that $\cC_{44}$ is unirational, for a general $[X]\in \cC_{44}$ contains a Fano embedded Enriques surface and their moduli space is unirational, see \cite{Ve2} and also \cite{Nu}. Recently, Lai \cite{L} showed that $\cC_{42}$ is uniruled.

\vskip 3pt

Mukai in a celebrated series of papers \cite{M1}, \cite{M2}, \cite{M3}, \cite{M4}, \cite{M5}  established structure theorems for polarized $K3$ surfaces of genus $g\leq 12$, as well as $g=13,16,18,20$.  In particular, $\F_g$ is unirational for those value of $g$. No structure theorem for the general $K3$ surface of genus $14$ is known. A quick inspection of Mukai's methods shows that the universal $K3$ surface $\F_{g,1}$ is unirational for $g\leq 11$ as well. On the other hand, Gritsenko, Hulek and Sankaran \cite{GHS} have proved that $\F_g$ is a variety of general type for $g>62$, as well as for $g=47,51,53,55,58,59,61$. In a similar vein, recently it has been established in \cite{TVA} that $\cC_d$ is of general type for all $d$ sufficiently large. As pointed out in Remark \ref{19}, whenever $\F_g$ is of general type, the Kodaira dimension of $\F_{g,1}$ is equal to $19$.

\vskip 5pt

The proof of Theorem \ref{main1} relies on the connection between singular scrolls and special cubic fourfolds. We fix a general point $[X]\in \cC_{26}$ and denote by $S$ the
\emph{associated} $K3$ surface, such that $S^{[2]}\cong F(X)\hookrightarrow \GG(1,5)$. For each $p\in S$, we introduce the rational curve $$\Delta_p:=\bigl\{\xi\in S^{[2]}:\{p\}= \mathrm{supp}(\xi)\bigr\}.$$ Under the Pl\"ucker embedding $\GG(1,5)\subseteq \PP^{14}$, the degree of $\Delta_p\subseteq F(X)$ is equal to $7$, which suggests that each point of $p\in S$ parametrizes a \emph{septic } scroll $R=R_p\subseteq X$. Imposing the condition $\mbox{disc}\langle h^2, [R]\rangle =26$, one obtains $R^2=25$. Assuming $R$ has isolated non-normal nodal singularities, the double point formula implies that $R$ has precisely $3$ non-normal nodes. We shall prove that indeed, a general fourfold $[X]\in \cC_{26}$ carries a $2$-dimensional family of $3$-nodal scrolls $R\subseteq X$ with $\mbox{deg}(R)=7$. Furthermore, this family of scrolls is  parametrized by the $K3$ surface $S$ associated to $X$.

\vskip 4pt

We now describe the moduli space of $3$-nodal septic scrolls. We start with the Hirzebruch surface $\FF_1:=\mbox{Bl}_{\mathpzc{o}}(\PP^2)$, where $\mathpzc{o}\in \PP^2$, and denote by $\ell$ the class of a line and by $E$ the exceptional divisor. The smooth septic scroll $R'=S_{3,4}\subseteq \PP^8$ is the image of the linear system
$$\phi_{|4\ell-3E|}:\FF_1\hookrightarrow \PP^8.$$
We shall show in Section 3 that the secant variety $\mbox{Sec}(R')\subseteq \PP^8$ is as expected  $5$-dimensional. Choose general points $a_1, a_2, a_3\in \mbox{Sec}(R')$ and denote by
$\Lambda:=\langle a_1,a_2,a_3\rangle \in \GG(2,8)$ their linear span. The image $R\subseteq \PP^5$ of the projection with center $\Lambda$
$$\pi_{\Lambda}:R'\rightarrow \PP^5$$ is a $3$-nodal septic scroll. Conversely, up to the action of $PGL(6)$ on the ambient projective space $\PP^5$, each such scroll appears in this way.
We denote by
$\mathfrak{H}_{\mathrm{scr}}$ the moduli space of unparametrized $3$-nodal septic scrolls in $\PP^5$, that is,
the quotient of the corresponding Hilbert scheme under the action of $PGL(6)$. Then as showed in Proposition \ref{isom1}, the space $\mathfrak{H}_{\mathrm{scr}}$ turns out to be  birationally isomorphic to the $9$-dimensional unirational variety
$$\mathfrak{H}_{\mathrm{scr}}\cong \mbox{Sym}^3\bigl(\mbox{Sec}(R')\bigr)/ \mbox{Aut}(R').$$

Fix a general $3$-nodal septic scroll $R\subseteq \PP^5$.  A general  $X\in \PP \bigl(H^0(\mathcal{I}_{R/\PP^5}(3))\bigr)=\PP^{12}$ is a smooth cubic fourfold. Since $R$ has no further singularities apart from the three non-normal nodes, the double point formula implies that $[X]\in \cC_{26}$. One sets up the following incidence correspondence between scrolls and cubic fourfolds of discriminant $26$:
$$\xymatrix{
  & \mathfrak{X}:=\Bigl\{(X,R):R\subseteq X\Bigr\}\big / PGL(6) \ar[dl]_{\pi_1} \ar[dr]^{\pi_2} & \\
   \cC_{26} & & \mathfrak{H}_{\mathrm{scr}}       \\
                 }$$
Thus $\mathfrak{X}$ is birational to a $\PP^{12}$-bundle over the unirational variety $\mathfrak{H}_{\mathrm{scr}}$. We then show that the fibre over a general cubic fourfold $[X]\in \cC_{26}$ of the projection $\pi_1$ is $2$-dimensional and isomorphic to the $K3$ surface $S$ appearing in the identification $F(X)\cong S^{[2]}$. We summarize the discussion above.

\begin{theorem}\label{univk3}
The universal $K3$ surface $\F_{14,1}$ is birational to the $\PP^{12}$-bundle $\mathfrak{X}$ over the moduli space $\mathfrak{H}_{\mathrm{scr}}$ of $3$-nodal septic scrolls $R\subseteq \PP^5$. A general fourfold $[X]\in \cC_{26}$ contains a two-dimensional family of such scrolls $R\subseteq X\subseteq \PP^5$. The space of such scrolls is isomorphic to the $K3$ surface associated to $X$.
\end{theorem}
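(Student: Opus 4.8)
The plan is to exploit the two fibration structures carried by $\mathfrak{X}$. The projection $\pi_2\colon \mathfrak{X}\to \mathfrak{H}_{\mathrm{scr}}$ is essentially tautological: over a fixed $3$-nodal septic scroll $R\subseteq \PP^5$ the fibre is $\PP\bigl(H^0(\mathcal I_{R/\PP^5}(3))\bigr)=\PP^{12}$, and by Proposition \ref{isom1} the base $\mathfrak{H}_{\mathrm{scr}}$ is unirational of dimension $9$; this already yields the $\PP^{12}$-bundle statement and gives $\dim \mathfrak{X}=21$. All the content lies in the other projection $\pi_1\colon \mathfrak{X}\to \cC_{26}$, whose general fibre I must identify with the associated $K3$ surface $S$, and in matching $\pi_1$ with the universal family $\F_{14,1}\to \F_{14}$ through Hassett's isomorphism.

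First I would construct a morphism $S\to \pi_1^{-1}([X])$ for a general $[X]\in \cC_{26}$. Since $26=2(n^2+n+1)$ with $n=3$, Hassett's theorem \cite{H1} gives $F(X)\cong S^{[2]}$ with $H^2=26$, and the Plücker polarization has the form $g=2H-c\,\delta$, where $2\delta$ is the class of the exceptional divisor of the Hilbert--Chow morphism $S^{[2]}\to S^{(2)}$. The Fujiki relation $g^4=3\,q(g)^2=\deg F(X)=108$ forces $q(g)=6$, and since $q(g)=4\cdot 26-2c^2$ this pins down $c=7$. The curves $\Delta_p$ are exactly the fibres of Hilbert--Chow over the diagonal, so $H\cdot\Delta_p=0$ and $\delta\cdot\Delta_p=-1$, whence $g\cdot\Delta_p=7$, recovering the septic degree. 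For each $p$ the lines of $X$ parametrized by $\Delta_p\cong\PP^1$ sweep out a surface $R_p\subseteq X$ of degree $7$; as the rulings are generically distinct, $R_p$ is a scroll, and the double point formula together with $[R_p]^2=25$ and $\mathrm{disc}\langle h^2,[R_p]\rangle=26$ forces exactly three non-normal nodes. This defines $\psi\colon S\to \pi_1^{-1}([X])$, $p\mapsto [R_p]$.

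Next I would prove that $\psi$ is an isomorphism. A septic scroll that is not a cone carries an intrinsic ruling, so $R_p$ determines the rational curve $\Delta_p\subseteq F(X)$, hence the point $p$; thus $\psi$ is injective. For surjectivity, a $3$-nodal septic scroll $R\subseteq X$ has rulings sweeping a rational curve $\gamma_R\subseteq F(X)$ of Plücker degree $7$, and the task is to show $\gamma_R=\Delta_p$ for some $p$, i.e.\ that $\gamma_R$ lies in a Hilbert--Chow fibre. A dimension count frames this: $\pi_1$ is dominant (a general cubic through a general such scroll lies in $\cC_{26}$, as in the discussion preceding the theorem), so its general fibre has dimension $\dim\mathfrak{X}-\dim\cC_{26}=21-19=2$; since $\psi(S)\cong S$ is an irreducible closed surface inside this fibre, it is a full component. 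I expect \emph{this} to be the main obstacle: ruling out further components, equivalently controlling all rational curves of the homology class $[\Delta_p]$ in $S^{[2]}$, or bounding $h^0(N_{R/X})=2$ by a normal-bundle computation, so that no $3$-nodal septic scroll in $X$ escapes the family $\{R_p\}_{p\in S}$.

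Finally I would assemble the birational statement. For $d=26\equiv 2\ (\mathrm{mod}\ 6)$ Hassett's map $\F_{14}\dashrightarrow \cC_{26}$ is a birational isomorphism carrying $[(S,H)]$ to $[X]$ with $S$ the associated $K3$ surface, under which the universal family $\F_{14,1}\to \F_{14}$ corresponds to the family of associated $K3$ surfaces over $\cC_{26}$. The fibrewise isomorphisms $\psi\colon S\xrightarrow{\ \sim\ }\pi_1^{-1}([X])$, realized by $(X,p)\mapsto (X,R_p)$, then glue to a birational map $\F_{14,1}\dashrightarrow \mathfrak{X}$ lying over $\F_{14}\cong \cC_{26}$. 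Combined with the $\PP^{12}$-bundle structure furnished by $\pi_2$, this yields all three assertions of the theorem.
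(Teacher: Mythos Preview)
Your plan mirrors the paper's argument closely, and you have isolated exactly the point where the work lies. The paper dispatches your ``main obstacle'' by the two mechanisms you anticipate. First, Proposition~\ref{scrolls1} shows, via the Bayer--Macr\`i inequality $q(R,R)\geq -\tfrac{5}{2}$ for effective $1$-cycles on hyperk\"ahler fourfolds, that \emph{every} effective curve on $S^{[2]}\cong F(X)$ of Pl\"ucker degree $7$ is already one of the $\Delta_p$; this is precisely the lattice computation controlling all curves in that class. Second, Corollary~\ref{k326} carries out the normal-bundle computation---but for the smooth curve $\Delta_p\subseteq F(X)$ rather than for the singular surface $R\subseteq X$---finding $N_{\Delta_p/F(X)}\cong \OO_{\PP^1}^{\oplus 2}\oplus \OO_{\PP^1}(-1)$, hence $h^0=2$ and $h^1=0$, so that $p\mapsto \Delta_p$ embeds $S$ isomorphically into $\mm_7(X)$. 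One organizational difference worth noting: the paper runs the map in the direction $\theta:\mathfrak{X}\to \F_{14,1}$, sending $(X,R)$ to $(S,p)$ with $\Delta_p=\Gamma_R$. Proposition~\ref{scrolls1} makes $\theta$ well-defined, it is visibly generically injective since $R$ determines $\Gamma_R$, and then birationality follows at once from irreducibility and the equality $\dim\mathfrak{X}=\dim\F_{14,1}=21$---without ever having to argue that the fibre $\pi_1^{-1}([X])$ has no extra components before concluding.
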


\vskip 4pt

Theorem \ref{univk3} allows us to elucidate the structure of $\F_{14,1}$ even further and prove its rationality. We fix a  $3$-nodal septic scroll $R\subseteq \PP^5$ as above and denote its nodes by $p_1,p_2$, $p_3$. The curve $\Gamma_R\subseteq \GG(1,5)$ induced by the rulings of $R$  is a smooth rational septic curve admitting bisecant lines $L_1,L_2$ and $L_3$ in the Pl\"ucker embedding of $\GG(1,5)$. Precisely, $L_i$  parametrizes the lines passing through $p_i$ and contained in the $2$-plane $P_i$ spanned by the two rulings of $R$ that intersect at the node $p_i$, for $i=1,2,3$. Since $\Gamma_R$ spans a $7$-dimensional linear space in projective space $\PP^{14}$ containing $\GG(1,5)$, using Mukai's work \cite{M6} on realizing canonical genus $8$ curves as linear sections of the Grassmannian $\GG(1,5)$, it follows that the intersection
$\GG(1,5)\cdot \bigl\langle \Gamma_R \bigr\rangle$ is a semi-stable curve of genus $8$. We denote by $Q\subseteq \langle \Gamma_R \rangle=\PP^7$ the residual curve defined by the following equality:

\begin{equation}\label{liaison}
\GG(1,5)\cdot \bigl\langle \Gamma_R \bigr\rangle =\Gamma_R+L_1+L_2+L_3+Q.
\end{equation}

We shall establish in Lemmas \ref{intnumb} and \ref{quarticzero} that $Q$ is a smooth rational quartic curve and
$Q\cdot L_i=1$  for  $i=1,2,3$, as well as $Q\cdot \Gamma_R=3$. Therefore $Q$ is the curve of rulings of a quartic scroll $R_Q\subseteq \PP^5$, which contains three
rulings $\ell_1,\ell_2,\ell_3$, such that that $p_i\in \ell_i$ and $\ell_i\in P_i$ for $i=1,2,3$. In particular, $R_Q$ contains the three nodes of the septic scroll $R$.
We can show furthermore that $R_Q$ is smooth and isomorphic to $\FF_0$, see Theorem \ref{scrolltypes2}.

\vskip 5pt

The construction above can be reversed. Using the automorphism group  of the scroll $R_Q\subseteq \PP^5$, we fix three of its rulings $\ell_1, \ell_2, \ell_3\in \GG(1,5)$,
as well as points $p_i\in \ell_i$. We set $$\PP_i^3:=\bigl\{P_i\in \GG(2,5):\ell_i\subseteq P_i\bigr\},$$
for $i=1,2,3$, then define a map
$$\varkappa:\PP_1^3\times \PP_2^3\times \PP_3^3/\mathfrak{S}_3\dashrightarrow \mathfrak{H}_{\mathrm{scr}},$$
by reversing the above construction and using the decomposition (\ref{liaison}). Along with the fixed point $p_i$, each  $2$-plane $P_i\in \PP^3_i$ defines a line
$L_i\subseteq \GG(1,5)$ meeting the curve $Q$ at the point $\ell_i$. Precisely, $L_i$ is the line of lines in $P_i$ passing through the point $p_i$.
To the triple $(P_1,P_2,P_3)$ we associate the scroll $R\subseteq \PP^5$ whose associated curve of rulings $\Gamma_R$ is defined by the formula (\ref{liaison}). The above discussion indicates that $\varkappa$ is dominant. In fact more can be proved:

\begin{theorem}\label{rational2}
The moduli space of scrolls $\mathfrak{H}_{\mathrm{scr}}$ is birational to $\PP_1^3\times \PP_2^3 \times \PP^3_3/\mathfrak{S}_3$ and is thus rational.
\end{theorem}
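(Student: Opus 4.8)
The plan is to construct an explicit rational inverse to the dominant map $\varkappa$, which at once yields dominance and generic injectivity. Both the source $\PP_1^3\times\PP_2^3\times\PP_3^3/\mathfrak S_3$ and the target $\mathfrak H_{\mathrm{scr}}$ are irreducible of dimension $9$: each $\PP_i^3=\{P\in\GG(2,5):\ell_i\subseteq P\}$ is a projective $3$-space, so the quotient by the finite group $\mathfrak S_3$ has dimension $9$, while $\dim\mathfrak H_{\mathrm{scr}}=9$ by Proposition \ref{isom1}. A birational $\varkappa$ therefore forces $\mathfrak H_{\mathrm{scr}}$ to be birational to the source, and everything reduces to reconstructing the unordered triple $\{P_1,P_2,P_3\}$ from a general scroll $R\in\mathfrak H_{\mathrm{scr}}$.

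First I would reverse the construction. From a general $3$-nodal septic scroll $R\subseteq\PP^5$ one forms the curve of rulings $\Gamma_R\subseteq\GG(1,5)$ and reads off the three nodes $p_1,p_2,p_3$ together with their bisecant lines $L_1,L_2,L_3$. Intersecting $\GG(1,5)$ with $\langle\Gamma_R\rangle=\PP^7$ and applying the liaison relation (\ref{liaison}) isolates the residual quartic $Q$; by Lemmas \ref{intnumb} and \ref{quarticzero} and Theorem \ref{scrolltypes2}, $Q$ is a smooth rational quartic, hence the curve of rulings of a smooth quartic scroll $R_Q\cong\FF_0$ of type $S_{2,2}$, and $\ell_i=Q\cap L_i$ is a ruling of $R_Q$ through the node $p_i$. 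In this way a general $R$ determines, canonically up to $\PGL(6)$, the whole configuration $(R_Q,\ell_1,\ell_2,\ell_3,p_1,p_2,p_3)$.

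It then remains to normalize this configuration to the fixed one defining $\varkappa$ and to extract the planes. Since all smooth quartic scrolls of type $S_{2,2}$ in $\PP^5$ are projectively equivalent, I may move $R_Q$ onto the fixed standard scroll, whose stabilizer in $\PGL(6)$ is $\Aut(R_Q)=\PGL(2)\times\PGL(2)$, with one factor acting $3$-transitively on the $\PP^1$ of rulings and the other $3$-transitively along the fibres. Thus a general ordered flag configuration $\bigl((\ell_1,p_1),(\ell_2,p_2),(\ell_3,p_3)\bigr)$ is carried to the fixed one with trivial residual stabilizer, so that the sole remaining ambiguity is the relabelling of the nodes, namely the diagonal $\mathfrak S_3\subseteq\PGL(2)\times\PGL(2)$ fixing the standard configuration. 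Finally, with $(R_Q,\ell_i,p_i)$ normalized, each $P_i$ is recovered from $L_i$: a line of $\GG(1,5)$ is exactly the pencil of lines contained in a unique plane $P$ and passing through a unique point of $P$, so $L_i$ encodes the flag $p_i\in P_i$ and returns $P_i\in\PP_i^3$. This defines the inverse of $\varkappa$ and shows that $\varkappa$ is birational.

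Rationality of the source is then immediate: the diagonal $\mathfrak S_3$ identifies the three factors through a single element of $\PGL(2)\times\PGL(2)$, so $\PP_1^3\times\PP_2^3\times\PP_3^3/\mathfrak S_3\cong\mbox{Sym}^3(\PP^3)$, and symmetric products of projective spaces are rational. I expect the genuine obstacle to be the normalization of the third paragraph: one must verify that the quartic scroll $R_Q$ and the flags $(\ell_i,p_i)$ extracted from a general $R$ can be transported to a single fixed configuration with no leftover positive-dimensional stabilizer, so that the general fibre of $\varkappa$ is precisely one $\mathfrak S_3$-orbit and nothing larger. Granting this, the pencil reconstruction of $P_i$ from $L_i$ and the rationality of $\mbox{Sym}^3(\PP^3)$ are formal.
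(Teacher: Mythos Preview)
Your proposal is correct and follows essentially the same route as the paper. Both arguments establish that $\varkappa$ is birational by constructing its inverse: from a general $[R]\in\mathfrak{H}_{\mathrm{scr}}$ one extracts $\Gamma_R$, its three bisecant lines $L_i$, and the residual quartic $Q$ via the liaison relation; one then normalizes the resulting configuration $(R_Q,p_1,p_2,p_3)$ to the fixed one using the transitivity with trivial stabilizer (your third paragraph is precisely the content of Lemma~\ref{stab}), and reads off the planes $\Pi_i$ from the lines $L_i$. The paper phrases this as ``$\varkappa$ is well-defined'' plus ``$\varkappa$ is generically injective'', while you package it as an explicit rational inverse, but the geometric content is identical, and your identification of the normalization step as the crux is exactly right.
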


Indeed, using the theorem on symmetric  functions, see \cite{Ma} or \cite{GKZ} Theorem 2.8 for a recent reference, all symmetric products of projective spaces are known to be rational. It is now clear that Theorem \ref{rational2} coupled  with Theorem \ref{univk3} implies that $\F_{14,1}$ is a rational variety.

\vskip 3pt

\noindent {\bf Acknowledgment:} We are most grateful to the referee, who carefully read the paper and whose many suggestions significantly improved the presentation and readability.

\section{$K3$ surfaces and cubic fourfolds}

We begin by setting some notation. Let $U\subseteq |\OO_{\PP^5}(3)|$ be the locus of smooth cubic fourfolds and set $$\cC:=U/PGL(6)$$ to be the
$20$-dimensional moduli space of cubic fourfolds. For an integer $d\equiv 0, 2 \ (\mbox{mod } 6)$, as pointed out in the Introduction, $\cC_d$ denotes the irreducible divisor of $\cC$ consisting of special cubic fourfolds of discriminant $d$. As usual, $\mathcal F_g$ is the irreducible $19$-dimensional moduli space of smooth polarized $K3$ surfaces $(S,H)$  of genus $g$, that is, with $H^2=2g-2$. We denote by $u:\F_{g,1}\rightarrow \F_g$  the universal $K3$ surface of genus $g$ in the sense of stacks. Each fibre $u^{-1}([S,H])$ is identified with the $K3$ surface $S$.

\vskip 4pt

Using the Hodge-theoretic similarity between $K3$ surfaces of genus $g=n^2+n+1$ and special cubic fourfolds of degree $2g-2$, Hassett \cite{H1} constructed a morphism of moduli spaces $$\varphi:\F_{n^2+n+2}\rightarrow \cC_{2(n^2+n+1)},$$ which is birational for $n\equiv 0,2 (\mbox{mod } 3)$, and of degree $2$ for $n\equiv 1  (\mbox{mod } 3)$ respectively. In particular, for $n=3$ there is a birational isomorphism of spaces of weight $2$ Hodge structures
$$\varphi:\F_{14}\stackrel{\cong}\longrightarrow \cC_{26},$$
that will be of use throughout the paper. At the moment, there is no geometric construction of the polarized $K3$ surface $\varphi^{-1}([X])$ associated to a general fourfold $[X]\in \cC_{26}$.

\vskip 4pt

We recall basic facts on Hilbert squares of $K3$ surfaces and refer to \cite{HT1} for a general reference on these matters.
Let $(S,H)$ be a polarized $K3$ surface with $\mbox{Pic}(S)=\mathbb Z\cdot H$ and $H^2=2g-2$. We denote by $S^{[2]}$ the Hilbert scheme of length two $0$-dimensional subschemes on $S$.
Then $H^2(S^{[2]}, \mathbb Z)$ is endowed with the \emph{Beauville-Bogomolov} quadratic form $q$. We denote by $\Delta\subseteq S^{[2]}$ the diagonal divisor consisting of
zero-dimensional subschemes  supported only at a single point and by $\delta:=\frac{[\Delta]}{2}\in H^2(S^{[2]},\mathbb Z)$ the reduced diagonal class. One has $q(\delta, \delta)=-2$. Note the canonical identification $$\Delta=\PP(T_S)=\cup \{\Delta_p:p\in S\},$$ where $\Delta_p$ is the rational curve consisting of those $0$-dimensional subschemes $\xi\in \Delta$ such that
$\mbox{supp}(\xi)=\{p\}$. We set $\delta_p:=[\Delta_p]\in H_2(S^{[2]}, \mathbb Z)$.

\vskip 4pt

For a curve $C\in |H|$ in the polarization class , we introduce the divisor
$$f_C:=\bigl\{\xi\in S^{[2]}:\mbox{supp}(\xi)\cap C\neq \emptyset\bigr\}$$
and set $f:=[f_C]\in H^2(S^{[2]},\mathbb Z)$.
If $p\in S$ is a general point, we also define the curve
$$F_p:=\bigl\{\xi=p+x\in S^{[2]}:x\in C\bigr\}$$
and set $f_p:=[F_p]\in H_2(S^{[2]},\mathbb Z)$.
The Beauville-Bogomolov form can be extended to a quadratic form on  $H_2(S^{[2]}, \mathbb Z)$, by
setting $q(\alpha, \alpha):=q(w_{\alpha}, w_{\alpha})$, with $w_{\alpha}\in H^2(S^{[2]},\mathbb Z)$ being the class characterized by
the property $\alpha\cdot u=q(w_{\alpha},u)$, for every $u\in H^2(S^{[2]},\mathbb Z)$. Here $\alpha \cdot u$ denotes the usual intersection product.

\vskip 4pt
One has the following decompositions, orthogonal with respect to $q$, both for the Picard group and for the group $N_1(S^{[2]},\mathbb Z)$ of $1$-cycles modulo numerical equivalence:
$$\mbox{Pic}(S^{[2]})\cong \mathbb Z\cdot f\oplus \mathbb Z\cdot \delta \  \mbox{ and } \ N_1(S^{[2]},\mathbb Z)\cong \mathbb Z\cdot f_p \oplus \mathbb Z\cdot  \delta_p.$$
We record, the more or less obvious relations:
\begin{equation}\label{intprod}
f\cdot f_p=2g-2, \ \delta\cdot \delta_p=-1, \ f\cdot \delta_p=0 \mbox{ and } \delta\cdot f_p=0.
\end{equation}

\vskip 4pt

Assume now that $X\subseteq \PP^5$ is a general special cubic fourfold of discriminant $26$ and let $[S,H]=\varphi^{-1}([X])\in \F_{14}$ be the associated  polarized $K3$ surface such that
\begin{equation}\label{ident1}
 S^{[2]}\cong F(X)\subseteq \GG(1,5)\hookrightarrow \PP^{14}.
 \end{equation}
Following \cite{BD}, let $\gamma_S:=[\OO_{S^{[2]}}(1)]$  be the hyperplane class of $\GG(1,5)$ restricted to the Hilbert square under the identification (\ref{ident1}).  Since $q(\gamma_S,\gamma_S)=6$, using (\ref{intprod}), it quickly follows that $$\gamma_S=2f-7\delta\in H^2(S^{[2]},\mathbb Z).$$

\begin{proposition}\label{scrolls1}
Suppose $[S,H]\in \F_{26}$ is a general element and let $R\subseteq S^{[2]}$ be an effective $1$-cycle such that $R\cdot \gamma_S=7$.
Then $R$ is one of the rational irreducible curves $\Delta_p$, for $p\in S$. In particular, $R$ is smooth.
\end{proposition}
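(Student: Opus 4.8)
The plan is to first determine the numerical class of $R$ in $N_1(S^{[2]},\mathbb Z)=\mathbb Z f_p\oplus\mathbb Z\delta_p$, and then to identify the cycle geometrically. Writing $[R]=af_p+b\delta_p$ and using the intersection table (\ref{intprod}) together with $\gamma_S=2f-7\delta$, one computes $R\cdot\gamma_S=52a+7b$ and $R\cdot f=26a$. The hypothesis $R\cdot\gamma_S=7$ then becomes the Diophantine equation $52a+7b=7$; since $\gcd(7,52)=1$ this forces $a=7k$ and $b=1-52k$ for some integer $k$. The class $f$ is nef, being the pullback of the ample class under the Hilbert--Chow morphism $\mu\colon S^{[2]}\to\mathrm{Sym}^2 S$; as $R$ is effective, $R\cdot f=182k\ge 0$, so $k\ge 0$. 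It remains to exclude $k\ge 1$.

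To rule out $k\ge 1$ I would exhibit a single nef class on which all of these putative classes are negative. Set $W:=649f-2340\delta$. A direct computation with (\ref{intprod}) gives $W\cdot R=2340-3562k$, which is strictly negative for every $k\ge 1$. Hence, granting that $W$ is nef, the inequality $W\cdot R\ge 0$ forces $k=0$; note that this step applies to the total class of $R$ and is therefore insensitive to whether $R$ is irreducible. The whole content of the argument is thus the nefness of $W$.

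This I would deduce from a determination of the Mori cone of $S^{[2]}$. Since $\mathrm{Pic}(S^{[2]})$ has rank two, $\overline{\mathrm{NE}}(S^{[2]})$ has exactly two extremal rays; one is spanned by $\delta_p$, with $f$ the corresponding nef boundary. For the second ray I invoke the classification of extremal rays on fourfolds of $K3^{[2]}$-type \cite{HT1}: a primitive generator $\rho=a_0 f_p+b_0\delta_p$ satisfies $q(\rho)\ge -\tfrac52$, and, being extremal, it is non-positive for the Beauville--Bogomolov form, so that $b_0^2-52a_0^2\in\{0,1,2,3,4,5\}$. A short congruence analysis eliminates every value but one ($0$ is excluded because $52$ is not a square; $2$ and $5$ are non-residues modulo $13$; $3$ is impossible modulo $4$; and $4$ yields only the non-primitive class $2\rho$), leaving $b_0^2-52a_0^2=1$. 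The minimal solution of this Pell equation is $(a_0,b_0)=(90,-649)$, so $\rho=90f_p-649\delta_p$ with $q(\rho)=-\tfrac12$, and $\overline{\mathrm{NE}}(S^{[2]})=\langle\delta_p,\rho\rangle$. Since $W\cdot\delta_p=2340>0$ and $W\cdot\rho=0$, the class $W$ lies on the boundary of the nef cone and is in particular nef (one checks moreover $q(W)=26>0$, so $W$ lies in the positive cone, as it must).

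Finally, once $k=0$ is established we have $[R]=\delta_p$ and $R\cdot f=0$, so $R$ is contracted by $\mu$. Its support therefore lies in a single fibre $\Delta_p\cong\PP^1$ of $\mu$ over the diagonal, and as $[R]=[\Delta_p]$ is primitive we conclude $R=\Delta_p$, an irreducible smooth rational curve. The main obstacle is the middle step: for a general $[S,H]$ the Picard lattice of $S^{[2]}$ carries no small $(-2)$- or isotropic class, so the second extremal ray is governed not by a spherical class but by the Pell equation $x^2-52y^2=1$; pinning this down through \cite{HT1}, and thereby legitimizing the nef class $W$, is the crux of the proof.
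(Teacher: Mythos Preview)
Your argument is correct and arrives at the same conclusion, but by a substantially more elaborate route than the paper's. The paper dispatches the numerical step in two lines: writing $[R]=7a_1 f_p-(52a_1-1)\delta_p$, it invokes directly the Bayer--Macr\`i bound $q(R,R)\ge -\tfrac{5}{2}$ from \cite{BM}~Proposition~12.6, which unwinds to the quadratic inequality $39a_1^2-26a_1-1\le 0$ whose only integer solution is $a_1=0$. You instead rebuild the Mori cone of $S^{[2]}$ from scratch---eliminating the cases $b_0^2-52a_0^2\in\{0,2,3,4,5\}$ by congruences, solving the Pell equation $b_0^2-52a_0^2=1$, and then producing the boundary nef class $W=649f-2340\delta$ orthogonal to the second extremal ray. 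Both arguments rest on the same deep input (the description of extremal rays on $K3^{[2]}$-type fourfolds, \cite{HT1} and \cite{BM}), so neither is more elementary; the paper's is terse, while yours makes the cone structure and the role of the Pell equation fully explicit and, as you remark, the inequality $W\cdot R\ge 0$ applies transparently to an arbitrary effective $1$-cycle without any irreducibility assumption. For the geometric identification the paper argues via $[R]\cdot\delta=-1\Rightarrow R\subseteq\Delta$ and then excludes a horizontal component inside the projection $\Delta=\PP(T_S)\to S$; your use of the Hilbert--Chow contraction ($R\cdot f=0$) is an equivalent and marginally cleaner way to place $R$ in a single fibre $\Delta_p$.
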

\begin{proof}
Assume that $R$ is an effective $1$-cycle and write $[R]=af_p-b\delta_p\in N_1(S^{[2]},\mathbb Z)$. Since $7=R\cdot \gamma_S=52a-7b$, hence we can write $a=7a_1$, with $a_1\in \mathbb Z$,
and then $b=52a_1-1$. Using \cite{BM} Proposition 12.6, we have $q(R,R)\geq -\frac{5}{2}$. We obtain $39a_1^2-26a_1-1\leq 0$, and the only integer solution of this inequality is $a_1=0$,
therefore $[R]=\delta_p$.

Since $[R]\cdot \delta=-1$, it follows that $R\subseteq \Delta$. We claim that $R$ lies in one of the fibres of the $\PP^1$-bundle $\pi:\Delta=\PP(T_S)\rightarrow S$, which implies that $R=\Delta_p$, for some $p\in S$. Indeed, otherwise
$\pi(R)\equiv mH$, for some $m>0$. Accordingly, we write
$$mH^2=R\cdot \pi^{-1}(H)=R\cdot f=\delta_p\cdot f=0,$$
which is a contradiction.
\end{proof}

\begin{remark}
Unlike degree $26$, for  other values of $d$, a general $[X]\in \cC_d$ may contain several types of scrolls. For instance when $d=14$ and $\gamma_S=2f-5\delta$, the curves $\Delta_p$ with $p\in S$ correspond to quintic scroll, but $X$ also contains quartic scrolls corresponding to rational curves $R\subseteq F(X)$ with $[R]=3f_p-16\delta_p$. Note that $q(R,R)=-2$.
\end{remark}

\vskip 5pt

We now recall the correspondence between scrolls and rational curves in Grassmannians. Following for instance \cite{Dol} 10.4, we define a \emph{rational scroll} to be the image $R\subseteq \PP^n$ of a $\PP^1$-bundle $\pi:R'=\PP(\E)\rightarrow \PP^1$ under a map $\phi:R'\rightarrow \PP^n$ given by a linear subsystem of $|\OO_{\PP(\E)}(1)|$, thus sending the fibres of $\pi$ to lines in $\PP^n$.  Let $f_R:\PP^1\rightarrow \GG(1,n)$ be the map
$$f_R(t):=\bigl[\phi(\pi^{-1}(t))\bigr]$$ and denote by $\Gamma_R$ its image. Conversely, start with a non-degenerate map $f:\PP^1\rightarrow \GG(1,n)$,
then consider the pull-back under $f$ of the projectivization of tautological rank $2$ vector over $\GG(1,n)$, that is,
\begin{equation}\label{smoothscroll}
\Xi:=\Bigl\{(t,x):t\in \PP^1, x\in L_{f(t)}\Bigr\}\subseteq \PP^1\times \PP^n.
\end{equation}
Here $L_{f(t)}\subseteq \PP^n$ denotes the line whose moduli point in  $\GG(1,n)$ is precisely $f(t)$.

The projection $\pi_2:\Xi\rightarrow \PP^n$ is a finite map and its image is a scroll $R\subseteq \PP^n$ of degree
$$\mbox{deg}(\Gamma_R)=\mbox{deg} f^*\Bigl(\OO_{\GG(1,n)}(1)\Bigr).$$

\vskip 3pt

Throughout the paper, we interpret scrolls in terms of their associated curves of rulings. It will be useful to determine, using this language, when a scroll is smooth.

\begin{proposition}\label{rulcurve}
Let $R\subseteq \PP^n$ be a scroll which is not a cone and such that $\Gamma_R$ is a smooth rational curve in $\GG(1,n)$ which is not contained in a plane. Then there is a bijective correspondence between singularities of $R$ and
bisecant lines to $\Gamma_R$ lying on $\GG(1,n)$. In particular, if $\Gamma_R$ admits no bisecant lines contained in $\GG(1,n)$, then $R$ is smooth.
\end{proposition}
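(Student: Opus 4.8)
The plan is to exploit the smooth model $\Xi\subseteq \PP^1\times \PP^n$ of (\ref{smoothscroll}) and to read the singularities of $R$ off the finite map $\pi_2\colon \Xi\to R\subseteq\PP^n$. Since $\Gamma_R$ is a smooth rational curve, the defining map $f\colon\PP^1\to\GG(1,n)$ is an embedding, and $\Xi=\PP(\E)$ is a smooth (Hirzebruch) surface. I would first argue that, because $R$ is not a cone and $\Gamma_R$ is non-degenerate (not planar), the map $\pi_2$ is generically injective, hence birational onto its image, so that $\pi_2\colon\Xi\to R$ is the normalization. The singular locus of $R$ is then exactly the image of the locus where $\pi_2$ fails to be a local isomorphism, and this splits into two phenomena: points with at least two preimages (self-intersections) and points where the differential $d\pi_2$ is not injective (pinch points).

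For the self-intersection locus I would set up the dictionary between pencils of lines and lines on the Grassmannian. Note first that $\pi_2$ maps each fibre $\{t\}\times L_{f(t)}$ isomorphically onto the ruling $L_{f(t)}$, so two distinct preimages of a point $p\in R$ must lie over distinct parameters $t_1\neq t_2$; equivalently, $p$ lies on two distinct rulings $L_{f(t_1)},L_{f(t_2)}$ meeting at $p$. Two distinct lines through a common point $p$ span a $2$-plane $P\subseteq\PP^n$, and the pencil of all lines through $p$ inside $P$ is precisely a line $\ell\subseteq\GG(1,n)$ passing through $f(t_1)$ and $f(t_2)$, i.e.\ a bisecant line to $\Gamma_R$ lying on $\GG(1,n)$. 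Conversely, a bisecant line $\ell\subseteq\GG(1,n)$ meeting $\Gamma_R$ at $f(t_1),f(t_2)$ is a pencil with some vertex $p$, which forces both rulings $L_{f(t_1)},L_{f(t_2)}$ to pass through $p$; hence they meet and $p$ is a singular point of $R$. The hypothesis that $R$ is not a cone guarantees that no point lies on infinitely many rulings, so this assignment is a genuine bijection between self-intersection points and bisecant lines on $\GG(1,n)$.

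It remains to show that no further singularities come from $d\pi_2$ failing to be injective, and this I expect to be the main obstacle. Working locally, write $V(t)=\langle a(t),b(t)\rangle\subseteq\mathbb{C}^{n+1}$ for the $2$-plane corresponding to $f(t)$, so that $\pi_2(t,[\alpha:\beta])=[\alpha a(t)+\beta b(t)]$. A direct Jacobian computation shows that $d\pi_2$ drops rank at $(t,[\alpha:\beta])$ exactly when $\alpha a'(t)+\beta b'(t)\in V(t)$, that is, when $[\alpha:\beta]$ lies in the kernel of the tangent homomorphism $f'(t)\in\Hom\bigl(V(t),\mathbb{C}^{n+1}/V(t)\bigr)$. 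As $f$ is an embedding this homomorphism is nonzero, so its kernel is nontrivial precisely when $f'(t)$ has rank one, which is the classical condition that the tangent line to $\Gamma_R$ at $f(t)$ be a rank-one direction, i.e.\ lie on $\GG(1,n)$. Thus a pinch point of $R$ forces a tangent line of $\Gamma_R$ contained in $\GG(1,n)$, an infinitely near bisecant. The delicate point is to control this rank-one focal locus: a priori it can be nonempty, producing either isolated pinch points of the above type or, in the extreme developable case, an entire cuspidal edge. One must therefore use the standing hypotheses to exclude the developable situation (a cone being already ruled out) and to show the residual rank-one locus is empty, so that $\pi_2$ is an immersion. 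Granting this, the dictionary of the previous paragraph yields the asserted bijection between singularities of $R$ and bisecant lines on $\GG(1,n)$, and in particular the absence of such bisecant lines forces $R$ to be smooth.
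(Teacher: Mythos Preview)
Your approach is exactly the paper's: analyze the finite birational morphism $\pi_2\colon\Xi\to R$ and split the singularities into the multi-preimage locus (case (i)) and the differential-drop locus (case (ii)). Your treatment of case (i) and your Jacobian analysis of case (ii), identifying pinch points with tangent lines of $\Gamma_R$ contained in $\GG(1,n)$, are correct and coincide with what the paper does.

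The discrepancy is in how you finish. You propose to \emph{eliminate} case (ii) by showing, from the hypotheses ``not a cone'' and ``$\Gamma_R$ not planar'', that the rank-one focal locus is empty, so that $\pi_2$ is an immersion. This cannot be done: those hypotheses do not prevent $\Gamma_R$ from having isolated tangent lines lying on $\GG(1,n)$, and the paper makes no such claim. Instead, the paper \emph{includes} case (ii) in the bijection. A tangent line of $\Gamma_R$ contained in $\GG(1,n)$ is treated as a bisecant line in the usual scheme-theoretic sense (a line meeting $\Gamma_R$ in a length-$2$ scheme), and it corresponds to a pinch-type singularity of $R$; the paper's local computation shows this correspondence is reversible. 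You already noticed this yourself when you called it ``an infinitely near bisecant''. So the fix is not to rule out case (ii) but simply to record that it, too, produces a bisecant line on $\GG(1,n)$ and that conversely any such tangent-type bisecant gives a singularity of $R$. Once you adopt this reading of ``bisecant'', your argument is complete and identical to the paper's; the ``main obstacle'' you anticipated dissolves.
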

\begin{proof}
We consider the projection $\pi_2:\Xi\rightarrow R$ defined by (\ref{smoothscroll}). Then $\Xi$ is a smooth variety and the assumptions made on $R$ imply that  $\pi_2$ is a finite map. If a point $x\in R$ corresponds
to a singularity, then one of the two following possibilities occur: (i) the fibre $\pi_2^{-1}(x)$ consists of more than point, or (ii) the differential of $\pi_2$ at a point of $(t,x)\in \pi_2^{-1}(x)$ is not an isomorphism.

\vskip 3pt

In case (i), we choose distinct points $t_1, t_2\in \pi_1 \bigl(\pi_2^{-1}(x)\bigr)$. Denoting by $\ell_1:=f_R(t_1)$ and $\ell_2:=f_R(t_2)$ the rulings of $\Xi$ corresponding to these points, we observe that  $x\in \ell_1\cap \ell_2$. The set $L$ of lines in the $2$-plane $\langle \ell_1, \ell_2\rangle$ passing through $x$
is a line in $\GG(1,n)$ such that $\Gamma_R\cap L\supseteq \{\ell_1,\ell_2\}$, that is, $\Gamma_R$ possesses a secant line lying inside $\GG(1,n)$ in its Pl\"ucker embedding.
Note that $L$ is a genuine secant line in the sense that it meets the curve $\Gamma_R$ in two distinct points $\ell_1$ and $\ell_2$.
All lines lying inside $\GG(1,n)$ in its Pl\"ucker embedding  correspond to pencils of lines in a $2$-plane passing through a point in $\PP^n$. Thus conversely, when such a line meets
$\Gamma_R$ in two distinct points, these will correspond to two incident rulings of $R$. In particular $R$ is singular at their point of intersection.

\vskip 3pt

To deal with case (ii), we carry out a local calculation. Assume $(t_0,x)\in \Xi$ is a point at which the differential of $\pi_2$ is not an isomorphism. We set $\ell_0:=f_R(t_0)$ and
denote by $$p_{ij}(t)=a_i(t)b_j(t)-a_j(t)b_i(t), \ \ \mbox{ where } 0\leq i<j\leq n$$ the Pl\"ucker coordinates of the curve $\Gamma_R$ in a neighborhood of $\ell_0$,
where $a(t)=(a_0(t), \ldots, a_n(t))$ and $b(t)=(b_0(t), \ldots, b_n(t))$.

\vskip 3pt

In local coordinates, the map $\pi_2$ is given by $\PP^1\times \mathbb C\ni ([\lambda,\mu],t)\mapsto \bigl[(\lambda a_i(t)+\mu b_i(t))\bigr]=:x$. By direct calculation, the condition that $(d\pi_2)_{(t_0,x)}$ is not an isomorphism is equivalent to
$$b'(t_0)\wedge a(t_0)=0\in \bigwedge ^2 \mathbb C^{n+1}.$$

Setting $a_i:=a_i(t_0)$, $b_i:=b_i(t_0)$, $a_i':=a_i'(t_0)$ and
$b_i':=b_i(t_0)$, we then observe that the Pl\"ucker coordinates of a point on the tangent line $\mathbb T_{\ell_0}(\Gamma_R)\subseteq \PP^{{n+1\choose 2}-1}$ are given by
$$a_ib_j-a_jb_i+\mu\bigl(a_i'b_j+a_ib_j'-a_j'b_i-a_jb_i'\bigr)=b_j(a_i+\mu a_i')-b_i(a_j+\mu a_j'),$$
for some scalar $\mu$. It follows that the tangent line to $\Gamma_R$ at $\ell_0$  is contained in $\GG(1,n)$. The argument being reversible, we finish the proof.
\end{proof}

\vskip 5pt

The scrolls $R\subseteq \PP^n$ we consider most of the time have at worst \emph{non-normal nodal singularities} $x\in R$,  corresponding to the case $|\phi^{-1}(x)|=2$.
The tangent cone of $R$ at $x$ is isomorphic to the union of two $2$-planes in $\PP^4$ meeting in one point. According to Proposition \ref{rulcurve}, to each such singularity corresponds a line in the Pl\"ucker embedding of $\GG(1,n)$ meeting $\Gamma_R$ in two distinct points.

\vskip 3pt

Suppose now that $R\subseteq X\subseteq \PP^5$ is a rational scroll with isolated nodal singularities contained in a cubic fourfold. Using  the \emph{double point formula} \cite{Ful} 9.3
applied to the map $\phi:R'\rightarrow X$, we find the number of singularities of $R=\phi(R')$:
\begin{equation}\label{doublepoints}
D(\phi)=R^2-6h^2-K_R^2-3h\cdot K_R+2\chi_{\mathrm{top}}(R).
\end{equation}

When $[X]\in \cC_{26}$, assuming that $A(X)=\langle h^2,[R]\rangle$, where $h^2\cdot [R]=\mbox{deg}(R)=7$, necessarily $R^2=25$. From formula
(\ref{doublepoints}), we compute $D(\phi)=3$, that is, if $R$ has only (isolated) improper nodes, then it is $3$-nodal.

\vskip 4pt

Before stating our next result, we recall that $\mm_{0}(F(X),7)$ denotes the space of stable maps $f:C\rightarrow F(X)$, from a nodal curve $C$ of genus zero such that
$\mbox{deg}(f^*(\OO_{F(X)}(1))=7$. We denote by $\cM_{0}(F(X),7)$ the open sublocus consisting of maps with source $\PP^1$ and denote by $\mm_7(X)$
the closure of $\cM_{0}(F(X),7)$ inside $\mm_0(F(X),7)$.

\begin{corollary}\label{k326}
Let $[X]\in \cC_{26}$ a general special fourfold of discriminant $26$ and $[S,H]\in \F_{26}$ its associated $K3$ surface. Then there is an isomorphism $S\cong \mm_7(X)$.
\end{corollary}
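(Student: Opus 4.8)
The plan is to construct an explicit morphism $\alpha\colon S\to \mm_7(X)$ from the universal family carried by the diagonal, prove it is bijective using Proposition \ref{scrolls1}, and then upgrade bijectivity to an isomorphism through a normal bundle computation. First I would recall that the diagonal $\Delta=\PP(T_S)\subseteq S^{[2]}=F(X)$ is a $\PP^1$-bundle $\pi\colon \Delta\to S$ whose fibres are precisely the smooth rational curves $\Delta_p$, each satisfying $\Delta_p\cdot \gamma_S=\delta_p\cdot(2f-7\delta)=7$. Composing $\pi$ with the inclusion $\Delta\hookrightarrow F(X)$ exhibits $\Delta\to S$ as a family of genus-zero stable maps of degree $7$ into $F(X)$, each of which is a closed embedding and hence automorphism-free and stable. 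By the universal property of the Kontsevich space $\mm_0(F(X),7)$, this family induces a morphism $\alpha\colon S\to \cM_0(F(X),7)\subseteq \mm_7(X)$, $p\mapsto [\Delta_p\hookrightarrow F(X)]$.

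Next I would establish bijectivity. Distinct points $p\neq p'$ yield distinct fibres $\Delta_p\neq \Delta_{p'}$, hence distinct stable maps, so $\alpha$ is injective. For surjectivity onto $\cM_0$, let $[f\colon \PP^1\to F(X)]$ have $\deg f^*\OO_{F(X)}(1)=7$; then by the projection formula the image cycle $f_*[\PP^1]$ is effective with $f_*[\PP^1]\cdot \gamma_S=7$, so Proposition \ref{scrolls1} forces $f_*[\PP^1]=\Delta_p$ as a reduced irreducible curve for some $p\in S$. Comparing multiplicities shows $\deg(f\colon \PP^1\to \Delta_p)=1$, and since $\Delta_p$ is smooth, $f$ is an isomorphism onto $\Delta_p$; in particular no multiple covers occur and every point of $\cM_0$ lies in the image of $\alpha$. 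Because $S$ is proper and $\mm_0(F(X),7)$ is separated, $\alpha(S)$ is closed; as it equals the open set $\cM_0(F(X),7)$, we conclude $\cM_0(F(X),7)=\mm_7(X)$ and that $\alpha\colon S\to \mm_7(X)$ is a bijective morphism.

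Finally, to promote this to an isomorphism I would compare tangent spaces. At $[\Delta_p]$ the tangent space to the stable-map space is $H^0(\Delta_p,N_{\Delta_p/F(X)})$, which I would analyze via the exact sequence $0\to N_{\Delta_p/\Delta}\to N_{\Delta_p/F(X)}\to N_{\Delta/F(X)}|_{\Delta_p}\to 0$. Here $N_{\Delta_p/\Delta}\cong \OO_{\PP^1}^{\oplus 2}$ since $\Delta_p$ is a fibre of the smooth map $\pi$, while $N_{\Delta/F(X)}|_{\Delta_p}=\OO_{F(X)}(\Delta)|_{\Delta_p}\cong \OO_{\PP^1}(-2)$, the degree being $\Delta\cdot\Delta_p=2\delta\cdot\delta_p=-2$. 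Since $H^0(\OO_{\PP^1}(-2))=0$, the sequence yields a canonical isomorphism $T_pS=H^0(N_{\Delta_p/\Delta})\xrightarrow{\ \sim\ }H^0(N_{\Delta_p/F(X)})$, and this identification is exactly the differential $d\alpha_p$, since the fibrewise deformations of $\Delta_p$ in $\Delta$ are precisely the deformations of $p$ in $S$. Thus $d\alpha_p$ is an isomorphism onto the two-dimensional space $T_{[\Delta_p]}\mm_7(X)$ for every $p$, so $\mm_7(X)$ is smooth of dimension $2$ along $\alpha(S)=\mm_7(X)$ and $\alpha$ is étale; being also bijective and proper, $\alpha$ is an isomorphism $S\cong \mm_7(X)$.

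I expect the main obstacle to be the deformation-theoretic bookkeeping: verifying that the tangent space of $\mm_7(X)$ at $[\Delta_p]$ really is $H^0(N_{\Delta_p/F(X)})$, despite the curves $\Delta_p$ being obstructed (indeed $H^1(N_{\Delta_p/F(X)})\cong \mathbb{C}\neq 0$, reflecting that $\Delta$ is an excess-dimensional uniruled divisor sweeping out a one-parameter-too-large family of rational curves), and confirming that $d\alpha_p$ coincides with the inclusion $H^0(N_{\Delta_p/\Delta})\hookrightarrow H^0(N_{\Delta_p/F(X)})$ rather than merely sharing its rank. Once the normal bundle sequence is in place, the remaining steps are formal.
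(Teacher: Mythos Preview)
Your approach coincides with the paper's: define $p\mapsto[\Delta_p]$, invoke Proposition~\ref{scrolls1} for surjectivity onto $\cM_0(F(X),7)$, and analyze the normal bundle sequence $0\to N_{\Delta_p/\Delta}\to N_{\Delta_p/F(X)}\to \OO_{\Delta_p}(\Delta)\to 0$ to control tangent spaces and conclude that the map is an immersion, hence an isomorphism.

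The one substantive discrepancy is the degree of the last term. The paper asserts $\OO_{\Delta_p}(\Delta)=\OO_{\PP^1}(-1)$ and deduces $H^1(N_{\Delta_p/F(X)})=0$, whereas you compute $\OO_{\Delta_p}(\Delta)=\OO_{\PP^1}(-2)$ via $[\Delta]\cdot\delta_p=2\delta\cdot\delta_p=-2$ and hence $H^1\cong\mathbb C$. Your computation is the correct one: since $K_{S^{[2]}}=0$, adjunction gives $\OO_\Delta(\Delta)=K_\Delta$, and the canonical bundle of $\PP(T_S)$ restricts to $\OO_{\PP^1}(-2)$ on each fibre. So the paper's claim that the obstruction space vanishes is in error. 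Fortunately both computations give $h^0(N_{\Delta_p/F(X)})=2$, so the tangent-space dimension is unaffected; your route to smoothness, via showing that $d\alpha_p$ already surjects onto the two-dimensional tangent space rather than appealing to unobstructedness, is the sound one and addresses exactly the worry you flagged at the end.
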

\begin{proof}
Using the identification $S^{[2]}\cong F(X)$, we define the map $j:S\rightarrow \mm_7(X)$, by setting $j(p):=\Delta_p\subseteq F(X)$. All points in the image
of $j$ consist of embedded smooth rational curves  $\PP^1\stackrel{\cong}\hookrightarrow  \Delta_p$ and we identify $\Delta_p$ with the corresponding
map $\PP^1\hookrightarrow F(X)$. In a neighborhood of this map, the moduli space $\mm_0(F(X),7)$ is locally isomorphic to the Hilbert scheme
of septic rational curves on $F(X)$.
\vskip 3pt

The tangent space of $\mm_7(X)$ at the point $[\Delta_p]$ is canonically isomorphic to $H^0(N_{\Delta_p/F(X)})$. Using the following exact sequence on $\Delta_p\cong \PP^1$
$$0\longrightarrow N_{\Delta_p/\Delta}\longrightarrow N_{\Delta_p/F(X)}\longrightarrow \OO_{\Delta_p}(\Delta)\longrightarrow 0,$$
since $N_{\Delta_p/\Delta}=\OO_{\Delta_p}^{\oplus 2}$ and $\OO_{\Delta_p}(\Delta)=\OO_{\Delta_p}(-1)$, we compute $N_{\Delta_p/F(X)}=\OO_{\Delta_p}^{\oplus 2}\oplus \OO_{\Delta_p}(-1)$.
It follows that $H^1(\Delta_p, N_{\Delta_p/F(X)})=0$, hence the obstruction space for deformations vanishes and
$$\mbox{dim } T_{[\Delta_p]}(\mm_0(F(X),7)=h^0(\Delta_p, N_{\Delta_p/F(X)})=2.$$
We conclude that $[\Delta_p]$ is a smooth point of expected dimension of $\mm_7(X)$, for every $p\in S$.

\vskip 3pt
Furthermore, $j$ is injective, because for distinct points $p,q\in S$, since $\Delta_p\cap \Delta_q=\emptyset$, the associated scrolls $R_p$ and $R_q$ share no rulings.
We finally observe that $j$ is an immersion. Indeed, for each $p\in S$, we have the identification
$\Delta_p=\PP\Bigl(T_p(S)\oplus T_p(S)/T_p(S)\Bigr)$, the quotient being given by the diagonal embedding. Thus the differential $dj(p)$ is essentially the identity map,
via the identification $\PP(T_S)\cong \bigcup_{p\in S} \PP\bigl(N_{\Delta_p/\Delta}\bigr)$. Since according to Proposition \ref{scrolls1}, we have that $\cM_0(F(X),7)\subseteq \mbox{Im}(j)$, we can
conclude the proof.
\end{proof}

\section{Nodal septic scrolls and cubic fourfolds}

In this section we study in more detail the moduli space $\mathfrak{H}_{\mathrm{scr}}$ of $3$-nodal septic scrolls that will be used to parametrize the universal $K3$ surface of degree $26$.
We fix once and for all the smooth septic scroll
$$R':=S_{3,4} \hookrightarrow \PP^8,$$
given as the image of the map $\phi_{|4\ell-3E|}$ on the Hirzebruch surface $\FF_1=\mbox{Bl}_{\mathpzc{o}}(\PP^2)$. We denote by $h:R'\rightarrow \PP^1$ the map induced by the linear system $|\ell-E|$. The fibres of $h$ are pairwise disjoint lines in $\PP^8$. Equivalently, we consider the vector bundle on $\PP^1$
$$\G=\OO_{\PP^1}(3)\oplus \OO_{\PP^1}(4)$$ and then $R'\cong \PP(\G)$. One has the canonical identification between space of sections:
$$H^0\bigl(R',\OO_{R'}(1)\bigr)\cong H^0\bigl(\PP(\G), \OO_{\PP(\G)}(1)\bigr)\cong H^0\bigl(\PP^1, \G\bigr).$$

Later, when computing the dimension of the parameter space of $3$-nodal septic scrolls, we shall make use of the basic fact $$\mbox{dim } \mbox{Aut}(R')=\mbox{dim } \mbox{Aut}(\FF_1)=6.$$
Every smooth septic scroll in $\PP^8$ is obtained from $R'$ by applying a linear transformation of $\PP^8$. In particular, the Hilbert scheme of septic scrolls in $\PP^8$ has dimension
equal to $$\mbox{dim } PGL(9)-\mbox{dim } \mbox{Aut}(R')=80-6=74.$$

\vskip 4pt

Using coordinates in $\PP^8$, if $\PP^3_{y_0, \ldots, y_3}\subseteq \PP^8$ is the linear span of the twisted cubic $E$ corresponding to the exceptional divisor on $\FF_1$ and
$\PP^4_{x_0,\ldots, x_4}\subseteq \PP^8$ is the linear span of a rational quartic curve linearly equivalent to $\ell$, then the ideal of $R'$ in $\PP^8$ is given by the following
determinantal condition, see
for instance \cite{Ha} Lecture 9:
$$\mbox{rk}\begin{pmatrix}
x_0 & x_1& x_2& x_3 & y_0 & y_1 &y_2\\
x_1 & x_2 & x_3& x_4& y_1 &y_2 &y_3 \\
\end{pmatrix}
\leq 1.$$

The secant variety $\mbox{Sec}(R')\subseteq \PP^8$ is also  determinantal, with equations given by the $3\times 3$ minors  of the following 1-\emph{generic} matrix:

$$\mbox{rk} \begin{pmatrix}
x_0 & x_1& x_2& y_0& y_1\\
x_1 & x_2 & x_3& y_1 & y_2 \\
x_2 & x_3 & x_4 & y_2 & y_3\\
\end{pmatrix}
\leq 2
$$
It follows from \cite{CJ} Lemma 3.1 that, as expected, $\mbox{Sec}(R')$ is $5$-dimensional. Furthermore, applying e.g. \cite{Ei} Corollary 3.3, it follows that the singular
locus of $\mbox{Sec}(R')$ coincides with the scroll $R'$.

\vskip 4pt

\begin{lemma}\label{genproj}
 Let $a_1,a_2,a_3\in \mathrm{Sec}(R')$ be general points and set $\Lambda:=\langle a_1,a_2,a_3\rangle\in \GG(2,8)$. The image $R$ of the projection $\pi:R'\rightarrow \PP^5$ with
 center $\Lambda$ has three non-normal nodes corresponding to the three bisecant lines passing through $a_1, a_2$ and $a_3$ and no further singularities.
\end{lemma}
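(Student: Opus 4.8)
The plan is to study the finite morphism $\pi:=\pi_{\Lambda}|_{R'}\colon R'\to R\subseteq\PP^5$ directly, reading off its singularities from the way the plane $\Lambda$ meets the secant and tangential loci of $R'$. First I would record the numerics. Since $a_1,a_2,a_3$ are general points of the $5$-dimensional variety $\mathrm{Sec}(R')$, each $a_i$ lies off $R'$, hence on a unique genuine chord $s_i=\langle p_i,q_i\rangle$ joining two distinct points $p_i\neq q_i$ that lie on two distinct rulings of $R'$; uniqueness holds because the secant map $\mathrm{Sym}^2(R')\dashrightarrow \mathrm{Sec}(R')$ is generically finite, both sides being $5$-dimensional, and $R'$ is not secant defective, so the generic entry locus is a single pair. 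A dimension count (carried out below) then shows that for general $a_i$ the plane $\Lambda$ is disjoint from $R'$ and from every ruling of $R'$, so that $\pi$ is a finite morphism carrying the rulings of $R'$ isomorphically onto lines. In particular $\deg R=\deg R'=7$ and $R\subseteq\PP^5$ is a septic scroll.

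The singularities of $R$ arise in exactly two ways: either $\pi$ identifies two distinct points of $R'$, or $\pi$ fails to be an immersion. Two distinct points $p\neq q$ satisfy $\pi(p)=\pi(q)$ precisely when the chord $\langle p,q\rangle$ meets $\Lambda$, equivalently when $\Lambda$ meets $\mathrm{Sec}(R')$ in a point of $\mathrm{Sec}(R')\setminus R'$ lying on that chord. By construction the three chords $s_1,s_2,s_3$ are of this kind, meeting $\Lambda$ at $a_1,a_2,a_3$ respectively. At the image point $x_i:=\pi(p_i)=\pi(q_i)$ the surface $R$ carries two smooth local sheets, the images of neighbourhoods of $p_i$ and $q_i$, whose tangent planes are $\pi(\mathbb{T}_{p_i}(R'))$ and $\pi(\mathbb{T}_{q_i}(R'))$. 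For general $a_i$ these are two $2$-planes spanning a $\PP^4$ and meeting only at $x_i$, so the tangent cone of $R$ at $x_i$ is the union of two $2$-planes in $\PP^4$ meeting in one point; that is, $x_i$ is a non-normal node in the sense of Section~2. On the other hand, $\pi$ fails to be an immersion at a point $p$ exactly when $\mathbb{T}_p(R')$ meets $\Lambda$, i.e.\ when $\Lambda$ meets the tangential variety $\mathrm{Tan}(R')=\bigcup_{p\in R'}\mathbb{T}_p(R')$.

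It remains to show that these are the only singularities: that for general $a_1,a_2,a_3$ the plane $\Lambda$ meets $\mathrm{Sec}(R')$ in no point beyond $a_1,a_2,a_3$ (so $\pi$ identifies no further pair) and that $\Lambda\cap\mathrm{Tan}(R')=\emptyset$ (so $\pi$ is everywhere an immersion). This is the main obstacle, and its difficulty is that $\Lambda$ is \emph{not} a general plane of $\PP^8$: it is forced through three points of $\mathrm{Sec}(R')$, and each chord is itself contained in $\mathrm{Sec}(R')$, so the naive estimate ``a $2$-plane misses a $5$-fold in $\PP^8$'' does not apply verbatim. I would handle it by an incidence computation over the $15$-dimensional family of triples. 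Writing $\mathcal{C}\subseteq\GG(1,8)$ for the $4$-dimensional variety of chords of $R'$, I would bound the dimension of $\{(a_1,a_2,a_3,s)\in\mathrm{Sec}(R')^3\times\mathcal{C}: s\notin\{s_1,s_2,s_3\},\ s\cap\Lambda\neq\emptyset\}$ by fixing the chord $s$ and the intersection point $x\in s\cap\Lambda$ (a $5$-dimensional choice) and observing that $x\in\langle a_1,a_2,a_3\rangle$ forces the plane $\langle a_1,a_2,x\rangle$ to contain a point of $\mathrm{Sec}(R')$ beyond $a_1,a_2$. Since $\mathrm{Sec}(R')$ is irreducible, non-degenerate and not a cone, this is a proper condition, cutting the choice of $(a_1,a_2)$ down to dimension at most $9$; the total dimension is then at most $5+9=14<15$, so the general triple produces no extra identification. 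The same count against the \emph{smaller} $4$-dimensional locus $\mathrm{Tan}(R')\subsetneq\mathrm{Sec}(R')$ shows $\Lambda\cap\mathrm{Tan}(R')=\emptyset$ for general $a_i$ and thereby excludes pinch points. Combining these facts, $R$ has precisely the three non-normal nodes $x_1,x_2,x_3$ coming from the bisecant lines through $a_1,a_2,a_3$ and no further singularities.
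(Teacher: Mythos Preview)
Your overall set-up coincides with the paper's: analyse the finite map $\pi_\Lambda$, identify singularities with chords meeting $\Lambda$, and rule out non-immersive points via $\mathrm{Tan}(R')$. The discrepancy is in the one substantive step, namely showing that for general $a_1,a_2,a_3\in\mathrm{Sec}(R')$ the plane $\Lambda$ meets $\mathrm{Sec}(R')$ only in $\{a_1,a_2,a_3\}$. The paper does not argue this by a dimension count; it invokes the Trisecant Lemma (\cite{CC}, Proposition~2.6), a general-position result which says exactly that a general $(k+1)$-secant $k$-plane to a non-degenerate irreducible variety of the expected secant dimension meets it scheme-theoretically in the $k+1$ chosen points. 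That single citation also handles $\mathrm{Tan}(R')$, since $\mathrm{Tan}(R')\subseteq\mathrm{Sec}(R')$ and the $a_i$ are general, hence lie off $\mathrm{Tan}(R')$.

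Your incidence argument, as written, is circular at the crucial point. After fixing the extra chord $s$ and the point $x\in s$ (a $5$-dimensional choice), you claim that ``$\langle a_1,a_2,x\rangle$ contains a point of $\mathrm{Sec}(R')$ beyond $a_1,a_2$'' is a proper condition on $(a_1,a_2)$, cutting $10$ down to $9$. But $x$ itself is such a point, so this is vacuous; if instead you require a point beyond $a_1,a_2,x$, you are asking that a general trisecant plane to $\mathrm{Sec}(R')$ be $4$-secant, which is precisely the negation of what you want to prove. The honest count is $5+10+0=15$ (the last $0$ coming from $a_3\in\langle a_1,a_2,x\rangle\cap\mathrm{Sec}(R')$, generically finite), equal to the dimension of $\mathrm{Sec}(R')^3$, so no conclusion follows from dimensions alone. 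This is exactly why the Trisecant Lemma is a genuine theorem and not a parameter count: the incidence has the ``wrong'' expected dimension and one needs an additional argument (monodromy, or the tangent-cone analysis in \cite{CC}) to show the bad locus is proper. Either cite that result, or supply such an argument; the properties ``irreducible, non-degenerate, not a cone'' do not by themselves force the condition to be proper.
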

\begin{proof} The chosen  points $a_1,a_2,a_3$ can be assumed to lie in $\mbox{Sec}(R')-(R'\cup \mbox{Tan}(R'))$. Since $\mbox{dim } \mbox{Sec}(R')=5$, by using the \emph{Trisecant lemma},
see for instance \cite{CC} Proposition 2.6, it follows that the scheme-theoretic
intersection of $\mbox{Sec}(R')$ with $\Lambda$ consists only of the points $a_1,a_2,a_3$. In particular, $\Lambda\cap R'=\emptyset$, hence the projection $\pi=\pi_{\Lambda}:R'\rightarrow R$
is a regular morphism. Furthermore, each point $a_i$ lies on a unique bisecant line $\langle x_i, y_i\rangle$, where $x_i$ and $y_i$ are distinct points of $R'$, for $i=1,2,3$.

\vskip 3pt

Suppose now that for  $x,y\in R'$, one has $\pi(x)=\pi(y)$. This happens  if and only if $\langle x,y\rangle \cap \Lambda\neq \emptyset$, hence $\emptyset \neq \langle x,y\rangle\cap \Lambda
\subseteq \{a_1,a_2,a_3\}$ and then
necessarily $\{x,y\}=\{x_i,y_i\}$, for $i\in \{1,2,3\}$. Since $\Lambda\cap \mbox{Tan}(R')=\emptyset$, it follows that the differential of $\pi$ is everywhere injective.
To summarize, the only singularities of $R$ are the three non-normal nodes $\pi(x_i)=\pi(y_i)$, for $i=1,2,3$.
\end{proof}

We now fix a general projection $\pi=\pi_{\Lambda}:R'\rightarrow \PP^5$ as in Lemma \ref{genproj}. We denote by $p_i$ the three singularities of the image scroll
$R$.  The map $\pi_{_{\Lambda}}$ is defined by the 6-dimensional  subspace
$V := H^0(\PP^8, \mathcal I_{\Lambda / \PP^8}(1))$ of $H^0(\PP^1, \mathcal G)$. To give $\Lambda$ amounts to specifying $V \subseteq H^0(\PP^1, \mathcal G)$. Since $\Lambda \cap R'=\emptyset$ ,  it follows that the evaluation map $\mathrm{ev}_V: V \otimes \mathcal O_{\mathbf P^1} \to \mathcal G$ is surjective. Hence $\mathrm{ev}_V$ defines a morphism
$$
f: \PP^1 \rightarrow \GG(1,5).
$$
This map is induced by the ruling of the image scroll $R$, that is, $f_R=f$ is the map given by $f_R(t):=\bigl[\pi(h^{-1}(t))\bigr],$ for $t\in \PP^1$. Set $\Gamma_R:=\mbox{Im}(f_R)$.

\begin{proposition}\label{transv3}
For a general choice of the $3$-secant plane $\Lambda$ to $\mathrm{Sec}(R')$, the following hold:
\begin{enumerate}
\item $\mathrm{dim } \langle p_1, p_2,  p_3 \rangle=2$.
\item $\langle p_1, p_2,  p_3\rangle \cap R = \{p_1, p_2, p_3\}$.
\end{enumerate}
\end{proposition}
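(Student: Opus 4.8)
The plan is to pull both assertions back along the projection $\pi=\pi_\Lambda$ to the ambient $\PP^8$, where they become statements about the position of the six points $x_1,y_1,x_2,y_2,x_3,y_3\in R'$, the endpoints of the three bisecant lines through $a_1,a_2,a_3$. Since a general point of the fivefold $\mathrm{Sec}(R')$ lies on a unique secant chord and the incidence (unordered pair of points, point of their chord) dominates $\mathrm{Sec}(R')$ with generically finite fibres, the generality of $a_1,a_2,a_3$ forces $(x_1,y_1,x_2,y_2,x_3,y_3)$ to be a general point of $(R')^6$. Because $a_i\in\langle x_i,y_i\rangle$, the linear span $M:=\langle x_1,y_1,x_2,y_2,x_3,y_3\rangle$ contains $\Lambda$ and is the cone $\Lambda\ast\Pi$ over $\Pi:=\langle p_1,p_2,p_3\rangle$ with vertex $\Lambda$; hence $\dim\Pi=\dim M-3$ and $\pi(M)=\Pi$. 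Finally, $R'\cap\Lambda=\emptyset$ by Lemma \ref{genproj}, so $\Pi\cap R=\pi(M\cap R')$, and everything is reduced to understanding the single subscheme $M\cap R'\subseteq R'$.

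For assertion (i) it suffices to show $\dim M=5$, for then the cone formula gives $\dim\Pi=2$. This is immediate: $R'\subseteq\PP^8$ is embedded by the complete very ample system $|\OO_{R'}(1)|$ with $h^0=9\ge 6$, so six general points impose six independent conditions on it, which is precisely the statement that they span a $\PP^5$.

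For assertion (ii) I must prove that $M\cap R'$ consists of exactly the six points $x_i,y_i$, since then $\Pi\cap R=\pi(M\cap R')=\{p_1,p_2,p_3\}$. Now $M$ is cut out on $R'$ by a net $W\subseteq H^0(\OO_{R'}(1))=H^0(4\ell-3E)$ with $\dim W=3$, and $M\cap R'$ is its base locus, which already contains the six points. The key point is that there are no further base points. The class $4\ell-3E$ is primitive in $\mathrm{Pic}(\FF_1)=\ZZ\ell\oplus\ZZ E$, since $4\ell-3E=k(a\ell+bE)$ forces $k\mid\gcd(4,3)=1$; consequently no net inside $|4\ell-3E|$ can be composed with a pencil, as that would require $4\ell-3E\equiv kD$ with $k\ge 2$. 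Using in addition that $\OO_{R'}(1)$ is very ample, one checks that $W$ has no fixed component and that the induced rational map $\phi_W\colon\FF_1\dashrightarrow\PP^2$ is dominant. Its degree equals $(4\ell-3E)^2-\deg\mathrm{Bs}(W)=7-\deg\mathrm{Bs}(W)$ and must be at least $1$; since $\deg\mathrm{Bs}(W)\ge 6$, we conclude $\deg\mathrm{Bs}(W)=6$, so the six points are the whole of $M\cap R'$. A separate, easy check that $W$ is base-point free along the exceptional curve $E$ shows that no base point is concealed over the blown-up point of $\PP^2$.

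The one genuinely delicate step is this last one: ruling out the unexpected seventh point of $R'\cap M$. Equivalently, two general members of $W$ meet in $(4\ell-3E)^2=7$ points, six of which are prescribed, and the difficulty is to see that the residual seventh point does not lie on every member of the net. The primitivity of $4\ell-3E$ is exactly what prevents $W$ from being composed with a pencil and thereby keeps $\phi_W$ generically finite, making the degree count decisive; the only point requiring care is the irreducibility of the general member of $W$ (equivalently, the absence of a fixed curve in the net), which I would deduce from the very ampleness of $\OO_{R'}(1)$ together with the generality of the six points.
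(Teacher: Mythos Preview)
Your reduction to the six points $x_1,y_1,\ldots,x_3,y_3$ on $R'\subseteq\PP^8$ and your cone formula $\dim\Pi=\dim M-3$ are exactly right, and your argument for (i) via independence of conditions on $|\OO_{R'}(1)|$ is fine. For (ii), however, your route through the net $W$, the primitivity of $4\ell-3E$, and the degree of $\phi_W$ is considerably more elaborate than necessary, and the step you yourself flag as delicate---the absence of a fixed component in $W$---is a genuine gap. Very ampleness of $\OO_{R'}(1)$ says nothing about whether a particular sub-net has a fixed curve; what you would actually need is a case analysis over the effective classes $D$ on $\FF_1$ showing that for six general points the span $M$ contains no curve of class $D$. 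This is doable but you have not done it, and your parenthetical ``equivalently'' between irreducibility of the general member and absence of a fixed component is also not quite right without already knowing the system is not composed with a pencil.

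The paper bypasses all of this with a single move. Rather than starting from six general points and analysing their span, it starts from a general codimension-two linear section $Z=R'\cap\PP^6$. Since $\deg R'=7$ and rational normal scrolls are arithmetically Cohen--Macaulay, $Z$ consists of seven reduced points spanning $\PP^6$. Label six of them $x_1,y_1,\ldots,x_3,y_3$ and the seventh $z$; then $M=\langle x_1,\ldots,y_3\rangle$ is a $\PP^5$ not containing $z$, and since $M\subseteq\PP^6$ one has $M\cap R'\subseteq Z\setminus\{z\}$, so $M\cap R'=\{x_1,\ldots,y_3\}$ at once. Picking general $a_i\in\langle x_i,y_i\rangle$ gives a plane $\Lambda$ satisfying both conditions; since the conditions are open this suffices for the general $\Lambda$. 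Note that this $\PP^6$-section trick also hands you your missing step for free (a fixed curve of $W$ would lie in $M\cap R'$, which is finite), so the cleanest way to close your gap is precisely the paper's observation---at which point the primitivity argument, while correct in spirit, becomes unnecessary.
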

\begin{proof} It suffices to consider a codimension $2$ general linear section $Z \subseteq R'\subseteq \PP^8$. Then $Z$ is a smooth $0$-dimensional scheme supported at seven distinct points $x_1,y_1,x_2,y_2,x_3,y_3$ and $z$, spanning a  $6$-dimensional linear space in $\PP^8$. In particular, $z$ does not lie in the $5$-plane spanned by the points $\{x_i,y_i\}_{i=1}^3$ and no line intersecting the lines  $\langle x_1, y_1 \rangle$,
$\langle x_2, y_2 \rangle$, $\langle x_3, y_3 \rangle$ exists. Pick general points $a_i \in \langle x_i, y_{i} \rangle$, for $i=1,2,3$. Then the projection $\pi_{_{\Lambda}}$ defined by the plane $\Lambda = \langle a_1, a_2, a_3 \rangle$ satisfies both conditions  (i) and (ii).
\end{proof}

\vskip 4pt

For a projection $\pi_{\Lambda}$ satisfying the assumptions of Lemma \ref{genproj}, the map $f_R:\PP^1\rightarrow \GG(1,5)$ is an embedding, for $\Lambda$ intesects no ruling of $R'$.
We record the conclusion of Proposition \ref{rulcurve} for a scroll $R$ as above:

\begin{proposition}\label{bisecant}
The rational curve $\Gamma_R\subseteq \GG(1,5)$ admits three secant lines that lie in $\GG(1,5)$.
Conversely, a rational septic curve $\Gamma\subseteq \GG(1,5)$ having three secant lines lying in $\GG(1,5)$ is the curve of rulings of a $3$-nodal septic scroll
in $\PP^5$.
\end{proposition}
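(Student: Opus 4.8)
The plan is to obtain both halves of the statement as essentially immediate consequences of Proposition~\ref{rulcurve} and Lemma~\ref{genproj}, which together already encode the full dictionary between singularities of a scroll and bisecant lines of its curve of rulings lying on $\GG(1,5)$. For the forward direction, I would first check that the scroll $R$ produced by the projection $\pi_{\Lambda}$ satisfies the hypotheses of Proposition~\ref{rulcurve}: it is not a cone, being a general projection of the smooth, non-cone scroll $R'$; its curve of rulings $\Gamma_R$ is a smooth rational curve, since $f_R$ is an embedding (as recorded just above the statement); and $\Gamma_R$ is not contained in a plane, because a smooth rational curve of degree $7$ cannot be a plane curve, a smooth plane septic having arithmetic genus $15\neq 0$. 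Granting these, Proposition~\ref{rulcurve} yields a bijection between the singular points of $R$ and the bisecant lines of $\Gamma_R$ lying on $\GG(1,5)$. By Lemma~\ref{genproj}, $R$ has exactly three singular points, each a non-normal node; since a non-normal node is precisely case (i) in the proof of Proposition~\ref{rulcurve}, namely two distinct rulings meeting, the associated line is a genuine secant meeting $\Gamma_R$ in two distinct points. Hence $\Gamma_R$ possesses exactly three secant lines lying on $\GG(1,5)$.

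For the converse I would reverse the construction. Starting from a smooth rational septic curve $\Gamma\subseteq \GG(1,5)$ with three secant lines on $\GG(1,5)$, presented through its parametrization $f:\PP^1\rightarrow \Gamma$, I form the incidence variety $\Xi$ of (\ref{smoothscroll}) and let $R:=\pi_2(\Xi)$ be its image under the second projection. Then $R$ is a scroll of degree $\mathrm{deg}(\Gamma)=7$, i.e. a septic scroll. Running the argument of Proposition~\ref{rulcurve} backwards: each of the three secant lines meets $\Gamma$ in two distinct points, which correspond to two incident rulings of $R$, so $R$ acquires a non-normal node at each of the three intersection points. Because the correspondence in Proposition~\ref{rulcurve} is a bijection, these three nodes are the only singularities of $R$, so $R$ is a $3$-nodal septic scroll whose curve of rulings is $\Gamma$.

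The main obstacle I anticipate is the bookkeeping in the converse needed to conclude that $R$ has \emph{no further} singularities. One must ensure that the three given secant lines exhaust the bisecant lines of $\Gamma$ on $\GG(1,5)$ and that $\Gamma$ has no tangent line lying on $\GG(1,5)$, which would otherwise create a pinch point through case (ii) of Proposition~\ref{rulcurve}. This is exactly what the bijectivity in Proposition~\ref{rulcurve} supplies, once the mild non-degeneracy hypotheses of that proposition, that the constructed $R$ is not a cone and that $\Gamma$ is not contained in a plane, are verified for the given $\Gamma$; checking these two conditions is the only point that requires genuine care, and I would dispatch them by the same genus argument as above together with the observation that the rulings of a genuine septic $\Gamma$ cannot all pass through a common point.
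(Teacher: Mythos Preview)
Your proposal is correct and matches the paper's approach exactly: the paper gives no separate proof at all, stating just before the proposition that ``We record the conclusion of Proposition~\ref{rulcurve} for a scroll $R$ as above,'' after noting that $f_R$ is an embedding. Your detailed verification of the hypotheses of Proposition~\ref{rulcurve} and your discussion of the converse simply spell out what the paper leaves implicit; the caveat you flag about ``exactly three'' bisecants versus ``at least three'' is a genuine imprecision in the stated converse, and the paper does not address it either---in practice both the paper and your argument read the hypothesis as ``exactly three,'' consistent with how $\mathcal{U}$ is used in Section~4.
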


We establish a couple of properties concerning the linear system of cubic fourfolds containing a $3$-nodal septic scroll:

\begin{proposition}\label{vanish}
The following statements hold for a general $3$-nodal septic scroll $R\subset \PP^5$:
$$ (i) \ \ \mathrm{dim } |\mathcal{I}_{R/\PP^5}(3)|=12 \ \ \ \mbox{ and } \ \ \  (ii)\  \ \ H^1(\PP^5, \I_{R/\PP^5}(3))=0.$$
\end{proposition}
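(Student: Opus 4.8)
The plan is to reduce both statements to a single surjectivity assertion and then to carry out a Riemann--Roch computation on the normalization of $R$. From the ideal sheaf sequence
$$0\longrightarrow \I_{R/\PP^5}(3)\longrightarrow \OO_{\PP^5}(3)\longrightarrow \OO_R(3)\longrightarrow 0$$
together with $H^i(\PP^5,\OO_{\PP^5}(3))=0$ for $i>0$, one obtains $H^i(\I_{R/\PP^5}(3))\cong H^{i-1}(\OO_R(3))$ for $i\geq 2$ and a four-term sequence identifying $H^1(\I_{R/\PP^5}(3))$ with the cokernel of the restriction map
$$\rho\colon H^0(\OO_{\PP^5}(3))\longrightarrow H^0(R,\OO_R(3)).$$
Hence statement (ii) is exactly the surjectivity of $\rho$, while (i) reads $h^0(\I_{R/\PP^5}(3))=h^0(\OO_{\PP^5}(3))-\mathrm{rk}(\rho)=56-\mathrm{rk}(\rho)$; so everything comes down to understanding $\rho$.

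To pin down $h^0(\OO_R(3))$ I would use the normalization $\nu\colon \FF_1\to R$, which by Lemma \ref{genproj} identifies the three pairs $\{x_i,y_i\}=\nu^{-1}(p_i)$ at the non-normal nodes. Since $\nu^*\OO_R(1)=\OO_{\FF_1}(4\ell-3E)$, twisting the exact sequence comparing $\OO_R$ with its normalization by $\OO_R(3)$ gives
$$0\longrightarrow \OO_R(3)\longrightarrow \nu_*\OO_{\FF_1}(12\ell-9E)\longrightarrow \bigoplus_{i=1}^3\mathbb C_{p_i}\longrightarrow 0.$$
On $\FF_1$ one has $h^0(12\ell-9E)=\binom{14}{2}-\binom{10}{2}=46$, and writing $12\ell-9E=K_{\FF_1}+(15\ell-10E)$ with $15\ell-10E$ ample, Kodaira vanishing yields $H^{>0}(\FF_1,12\ell-9E)=0$. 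As $\OO_{\FF_1}(12\ell-9E)$ is very ample it separates the six points $x_i,y_i$, so the evaluation map onto $\bigoplus_i\mathbb C_{p_i}$ is surjective; the long exact sequence then gives $h^0(\OO_R(3))=46-3=43$ and $H^{>0}(\OO_R(3))=0$. Consequently $\chi(\I_{R/\PP^5}(3))=56-43=13$, so $h^0(\I_{R/\PP^5}(3))=13+h^1(\I_{R/\PP^5}(3))$, and both (i) and (ii) follow at once from the vanishing $h^1(\I_{R/\PP^5}(3))=0$.

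It remains to prove that $\rho$ is surjective, which is the heart of the matter. Pulling cubics back along the composition $\FF_1\to R\hookrightarrow\PP^5$ --- a morphism given by the $6$-dimensional subspace $V:=H^0(\PP^8,\I_{\Lambda/\PP^8}(1))\subseteq H^0(\FF_1,4\ell-3E)$ --- identifies $\rho$ with the multiplication map $\mu\colon \mathrm{Sym}^3V\to H^0(\FF_1,12\ell-9E)$. Every element of $\mathrm{Im}(\mu)$ is the restriction of a function on $\PP^5$, hence lies in the codimension-$3$ subspace $W\subseteq H^0(\FF_1,12\ell-9E)$ of sections taking equal values at each pair $(x_i,y_i)$, and surjectivity of $\rho$ is equivalent to $\mathrm{Im}(\mu)=W$. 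Since $R'\subseteq\PP^8$ is a smooth rational normal scroll, it is projectively normal, so the multiplication map on the full space $\mathrm{Sym}^3H^0(\FF_1,4\ell-3E)\to H^0(\FF_1,12\ell-9E)$ is already surjective; the assertion is that replacing this by the codimension-$3$ subspace $V$ determined by a general center $\Lambda$ costs precisely the three node conditions and nothing more.

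I expect this maximal-rank statement to be the main obstacle: a priori a special center could enlarge $\ker\mu$ beyond the expected dimension $13$, i.e. force extra cubics through $R$. The genericity already established is exactly what should rule this out --- by Lemma \ref{genproj} the plane $\Lambda$ meets $\mathrm{Sec}(R')$ only in $a_1,a_2,a_3$, and by Proposition \ref{transv3} the nodes span a genuine plane meeting $R$ only in $\{p_1,p_2,p_3\}$, so no secant or trisecant contributions beyond the three nodes can occur. To finish rigorously I would invoke semicontinuity: the locus $\{h^1(\I_{R/\PP^5}(3))=0\}$ is open in the irreducible family of $3$-nodal septic scrolls, so it suffices to exhibit a single scroll $R_0$ with $h^1=0$. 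Such an $R_0$ I would produce from one explicit projection, computing $h^0(\I_{R_0/\PP^5}(3))=13$ directly from the determinantal equations of $R'$ and $\mathrm{Sec}(R')$ displayed above; semicontinuity then upgrades the vanishing to the general member, completing the proof.
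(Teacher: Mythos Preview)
Your argument is correct and lands in exactly the same place as the paper's proof: both compute the Euler characteristic $\chi(\I_{R/\PP^5}(3))=13$ by a Riemann--Roch count on the normalization $\FF_1$, reduce (i) and (ii) simultaneously to the single vanishing $h^1(\I_{R/\PP^5}(3))=0$, and then certify that vanishing by an explicit computation on one scroll (the paper says ``a simple \emph{Macaulay} calculation'', you say ``one explicit projection''), upgraded to the general member by semicontinuity.

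The only cosmetic difference is the bookkeeping: the paper uses the sequence $0\to\I_{R/\PP^5}(3)\to\I_{\mathrm{Sing}(R)/\PP^5}(3)\to\I_{\mathrm{Sing}(R)/R}(3)\to 0$ and the identification $\pi^*|\I_{\mathrm{Sing}(R)/R}(3)|\cong|\I_{\{x_i,y_i\}/R'}(12\ell-9E)|$ to reach the count $53-40=13$, whereas you use the standard ideal sheaf sequence together with the normalization sequence $0\to\OO_R(3)\to\nu_*\OO_{\FF_1}(12\ell-9E)\to\bigoplus\mathbb C_{p_i}\to 0$ to reach $56-43=13$. Your discussion of the multiplication map $\mu\colon\mathrm{Sym}^3V\to H^0(\FF_1,12\ell-9E)$ is a nice conceptual reformulation of what surjectivity of $\rho$ means, but since you ultimately defer to an explicit check rather than proving $\mathrm{Im}(\mu)=W$ directly, it does not buy anything beyond what the paper does.
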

\begin{proof}
Recall that $R$ is the image of a projection $\pi=\pi_{\Lambda}:R'\rightarrow R$ with center $\Lambda$, and denote by $p_1,p_2,p_3\in R$ the three (non-normal) singularities of $R$ and
by $\{x_i,y_i\}=\pi^{-1}(p_i)$, for $i=1,2,3$. By Proposition \ref{transv3}, the points $p_1, p_2$ and $p_3$ are in general linear position in $\PP^5$ and thus impose independent conditions
on cubic hypersurfaces, that is, $H^1(\PP^5, \I_{\mathrm{Sing}(R)/\PP^5}(3))=0$.

By passing to cohomology in the short exact sequence
$$0\longrightarrow \I_{R/\PP^5}(3)\longrightarrow \I_{\mathrm{Sing}(R)/\PP^5}(3)\longrightarrow \I_{\mathrm{Sing}(R)/R}(3)\longrightarrow 0,$$
we write the following exact sequence:
$$0\longrightarrow H^0(\I_{R/\PP^5}(3))\longrightarrow H^0(\I_{\mathrm{Sing}(R)/\PP^5}(3))\longrightarrow H^0(\I_{\mathrm{Sing}(R)/R}(3))\longrightarrow H^1(\I_{R/\PP^5}(3))\longrightarrow 0.$$
Clearly $h^0\bigl(\PP^5,\I_{\mathrm{Sing}(R)/\PP^5}(3)\bigr)={8\choose 3}-3=53$. Furthermore,  we have the following identification of linear systems:
\begin{equation}\label{idlin}
\pi^*\Bigl(\bigl|\I_{\mathrm{Sing}(R)/R}(3)\bigr|\Bigr)=\Bigl|\I_{\{x_1,y_1,x_2,y_2, x_3,y_3\}/R'}\bigl(12\ell-9E\bigr)\Bigr|.
\end{equation}
The scroll $[R]\in \mathfrak{H}_{\mathrm{scr}}$ is obtained as a general projection like in Lemma \ref{genproj}. In particular,
the points $\{x_i,y_i\}_{i=1}^3\subseteq  R'$ are general as well, hence impose independent conditions on the linear system $|12\ell-9E|$ on $R'$.
Using the identification (\ref{idlin}), we compute:
$$h^0\bigl(R, \I_{\mathrm{Sing}(R)/R}(3)\bigr)=h^0\bigl(R',\OO_{R'}(12\ell-9E)\bigr)-6=h^0(\PP^2, \OO_{\PP^2}(12))-{10\choose 2}-6=40.$$
Therefore $h^0\bigl(\PP^5,\I_{R/\PP^5}(3)\bigr)=13$ if and only if $h^1\bigl(\PP^5, \I_{R/\PP^5}(3)\bigr)=0$. This last statement
can be proved via a simple \emph{Macaulay} calculation by choosing the points $a_1, a_2, a_3$ randomly in the variety $\mbox{Sec}(R')$ whose
equations have been given explicitly.
\end{proof}

\vskip 3pt

\begin{remark}
It is possible to realize the rational curve $\Gamma_R$ inside the linear system $|\OO_R(1)|$ as follows.
Recall that we have denoted by $\phi:\FF_1\hookrightarrow \PP^8$ the embedding whose image is the smooth scroll $R'$. In $|4\ell-3E|\cong \PP^8$, we consider the space of reducible hyperplane sections:
$$\Bigl\{A'+L': A'\in |3\ell-2E|, \ L'\in |\ell-E|\Bigr\}.$$
Note that $L'$ is a ruling of $R'$, whereas $A'\subseteq \PP^8$ is a sextic with $\langle A' \rangle =\PP^6$ and with $L'\cdot A'=1$. In the linear system $|3\ell-2E|$ there exists a
\emph{unique} sextic $A_0'$ such that $\Lambda \subseteq \langle A_0'\rangle\subseteq \PP^8$. The curve $A_0'$  corresponds to the unique curve in
the linear system
$$\Bigl|\I_{\{x_1,y_1,x_2,y_2,x_3,y_3\}/R'}(3\ell-2E)\Bigr|$$ on $R'$.
Indeed, $x_i,y_i\in A_0'$, therefore $a_i\in \langle x_i,y_i\rangle \subseteq \langle A_0'\rangle$, for $i=1,2,3$. It then follows that $\Lambda=\langle a_1,a_2,a_3\rangle \subseteq \langle A_0'\rangle$. The projection $A_0:=\pi(A_0')\subseteq \PP^5$ is
a sextic curve on $R$ passing through the nodes $p_1, p_2, p_3$. One identifies $\Gamma_R$ with $A_0$ via the map $L \mapsto L\cdot A_0$.
\end{remark}
\vskip 4pt


We denote by $\mathcal{H}_{\mathrm{scr}}$ the Hilbert scheme of $3$-nodal septic scrolls in $R\subseteq \PP^5$ and set
$$\mathfrak{H}_{\mathrm{scr}}:=\mathcal{H}_{\mathrm{scr}}/PGL(6).$$
We regard $\mathfrak{H}_{\mathrm{scr}}$ as the coarse moduli space of $3$-nodal septic scrolls.

\begin{proposition}\label{isom1}
The parameter space $\mathfrak{H}_{\mathrm{scr}}$ is  birationally isomorphic to
 $\mathrm{Sym}^3\bigl(\mathrm{Sec}(R')\bigr)/\mathrm{Aut}(R')$. In particular, $\mathfrak{H}_{\mathrm{scr}}$ is a unirational $9$-dimensional variety.
\end{proposition}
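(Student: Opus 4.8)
The plan is to exhibit explicit, mutually inverse rational maps between $\mathrm{Sym}^3\bigl(\mathrm{Sec}(R')\bigr)/\mathrm{Aut}(R')$ and $\mathfrak H_{\mathrm{scr}}$, and then read off the dimension and unirationality. First I would build the forward map. To an unordered general triple $\{a_1,a_2,a_3\}$ of points of $\mathrm{Sec}(R')$ I associate the plane $\Lambda=\langle a_1,a_2,a_3\rangle\in\GG(2,8)$ and the projected scroll $R:=\pi_\Lambda(R')\subseteq\PP^5$. By Lemma \ref{genproj} the scroll $R$ is $3$-nodal, so this yields a point of $\mathcal H_{\mathrm{scr}}$, well defined only up to a projective identification of the target $\PP(\PP^8/\Lambda)$ with $\PP^5$, that is, up to $PGL(6)$. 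Since every $g\in\mathrm{Aut}(R')$ extends to a linear automorphism of $\PP^8=\langle R'\rangle$ preserving $R'$, hence $\mathrm{Sec}(R')$, and since $\pi_{g\Lambda}\circ g=\bar g\circ\pi_\Lambda$ for the induced isomorphism $\bar g$ of target spaces, one has $\pi_{g\Lambda}(R')=\bar g\bigl(\pi_\Lambda(R')\bigr)$, which is projectively equivalent to $\pi_\Lambda(R')$. Thus the assignment is constant on $\mathrm{Aut}(R')$-orbits and descends to a rational map
\[ \Phi:\ \mathrm{Sym}^3\bigl(\mathrm{Sec}(R')\bigr)/\mathrm{Aut}(R')\ \dashrightarrow\ \mathfrak H_{\mathrm{scr}}. \]

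Next I would construct the inverse by reversing each choice. Given a general $[R]\in\mathfrak H_{\mathrm{scr}}$, its normalization is the Hirzebruch surface $\FF_1$: this is the generic scroll type, since among the smooth septic scrolls $S_{1,6},S_{2,5},S_{3,4}\subseteq\PP^8$ the balanced one $S_{3,4}=\FF_1$ fills the component of largest dimension $\dim PGL(9)-\dim\mathrm{Aut}(R')=74$. The pullback of $\OO_{\PP^5}(1)$ to the normalization is the unique class of fibre degree $1$ and self-intersection $7$, namely $4\ell-3E$, and the complete system $|4\ell-3E|$ re-embeds $\FF_1$ as a smooth septic scroll, which is $R'$ up to the action of $\mathrm{Aut}(\FF_1)=\mathrm{Aut}(R')$. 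Under this identification the $6$-dimensional subsystem $W\subseteq H^0(R',\OO_{R'}(1))$ cutting out $R\subseteq\PP^5$ recovers the projection center $\Lambda\in\GG(2,8)$ as the common zero locus of the hyperplanes in $W$, and the unordered triple is $\{a_1,a_2,a_3\}=\Lambda\cap\mathrm{Sec}(R')$ (three points by Lemma \ref{genproj}, spanning $\Lambda$ for general $R$).

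To see that these are genuinely inverse, hence that $\Phi$ is birational, I would invoke the functoriality of normalization. If $g\in PGL(6)$ identifies two projected scrolls, then $g$ restricts to an isomorphism of them, which lifts to an isomorphism $\tilde g\in\mathrm{Aut}(\FF_1)=\mathrm{Aut}(R')$ of the abstract normalizations. As $g^*\OO_{\PP^5}(1)=\OO_{\PP^5}(1)$, the map $\tilde g$ preserves the polarization $4\ell-3E$ and carries one subsystem $W$ to the other, hence one center $\Lambda$, and so one triple, to the other. The main obstacle lies precisely here: one must check that the $\mathrm{Aut}(R')$-indeterminacy in reconstructing the fixed model $R'\subseteq\PP^8$ from the \emph{abstract} normalization is exactly the quotient taken on the source, and that a general $3$-nodal septic scroll really has normalization $\FF_1$ rather than an unbalanced $S_{a,b}$; granting this, the two constructions are inverse over the dense locus where Lemma \ref{genproj} and Proposition \ref{transv3} apply.

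Finally, I would extract the dimension and unirationality. Since $\mathrm{Sec}(R')$ is $5$-dimensional, $\mathrm{Sym}^3\bigl(\mathrm{Sec}(R')\bigr)$ has dimension $15$. The group $\mathrm{Aut}(R')=\mathrm{Aut}(\FF_1)$ is the stabilizer of $\mathpzc{o}$ in $PGL(3)$, of dimension $6$, and it acts with finite stabilizers on a general triple: such a triple records six general points of $\FF_1\subseteq\PP^2$, and an element of $PGL(3)$ fixing four general points is the identity. Hence the quotient has dimension $15-6=9$, so $\dim\mathfrak H_{\mathrm{scr}}=9$. For unirationality, note that $\mathrm{Sec}(R')$ is rational, being dominated birationally by the $\PP^1$-bundle of points lying on secant lines over $R'\times R'\cong\FF_1\times\FF_1$; consequently $\bigl(\mathrm{Sec}(R')\bigr)^3$ is rational, its symmetric quotient $\mathrm{Sym}^3\bigl(\mathrm{Sec}(R')\bigr)$ is unirational, and the further quotient by $\mathrm{Aut}(R')$ remains unirational. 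Through the birational isomorphism $\Phi$ this gives that $\mathfrak H_{\mathrm{scr}}$ is a unirational $9$-dimensional variety, completing the proof.
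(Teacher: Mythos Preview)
Your proposal is correct and follows essentially the same approach as the paper: define the forward map $\Lambda=\langle a_1,a_2,a_3\rangle\mapsto \pi_\Lambda(R')$ mod $PGL(6)$, check $\mathrm{Aut}(R')$-invariance, and use Lemma \ref{genproj} together with the $\PP^1$-bundle structure of $\mathrm{Sec}(R')$ for unirationality. The paper's proof is terser and simply asserts that $\vartheta$ is birational by Lemma \ref{genproj}; you supply the inverse explicitly via normalization and recovery of $\Lambda$ from the $6$-dimensional subsystem, which is a welcome elaboration. One point where you go beyond the paper but could be tightened: your argument that the general $3$-nodal septic scroll has normalization $\FF_1$ rests on a dimension count in $\PP^8$, which does not by itself control the situation after projection; it would be cleaner either to carry the count through to $\PP^5$ (comparing the dimensions of the projected loci for each $S_{a,7-a}$), or to note, as the paper tacitly does, that the relevant component of $\mathcal H_{\mathrm{scr}}$ is the closure of projections of $R'=S_{3,4}$.
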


\begin{proof}
 We identify $\mbox{Aut}(R')$ with the group consisting of linear automorphisms $\sigma\in PGL(9)$ such that $\sigma(R')=R'$. Every  $\sigma\in \mbox{Aut}(R')$ clearly invariates
 $\mbox{Sec}(R')$. Since $\mbox{Sing} (\mbox{Sec}(R'))=R'$, conversely, every automorphism $\sigma\in PGL(9)$ invariating $\mbox{Sec}(R')$ belongs actually to
 $\mbox{Aut}(R')$. One has a birational action of $\mbox{Aut}(R')$ on $\mbox{Sym}^3 \bigl(\mbox{Sec}(R')\bigr)$ given by
 $$\sigma \langle a_1,a_2,a_3\rangle:=\langle \sigma(a_1),\sigma(a_2),\sigma(a_3)\rangle,$$
 for $\sigma\in \mbox{Aut}(R')$ and $a_1,a_2,a_3\in \mbox{Sec}(R')$. We can now define a birational morphism
 $$\vartheta:\mathrm{Sym}^3\bigl(\mathrm{Sec}(R')\bigr)/\mathrm{Aut}(R')\dashrightarrow \mathfrak{H}_{\mathrm{scr}}, \ \mbox{ by setting}$$
 $$\Lambda:=\langle a_1,a_2,a_3\rangle \mapsto \pi_{\Lambda}(R') \ \mathrm{ mod }\ PGL(6),$$
 where $\pi_{\Lambda}:\PP^9\dashrightarrow \PP^5$ is a projection of center $\Lambda$. The assignment is clearly $\mbox{Aut}(R')$-invariant, hence $\vartheta$ is well-defined. Applying Lemma \ref{genproj}, we obtain that $\vartheta$ is a birational isomorphism.

 \vskip 3pt
 The secant variety $\mbox{Sec}(R')$ being a $\PP^1$-bundle over the rational variety $\mbox{Sym}^2(R')$ is unirational. This implies that $\mbox{Sym}^3\bigl(\mbox{Sec}(R')\bigr)$
 and thus $\mathfrak{H}_{\mathrm{scr}}$ are unirational as well.
\end{proof}

Over the Hilbert scheme  $\mathcal{H}_{\mathrm{scr}}$ we consider the universal family of scrolls:
$$
\begin{CD}
{\mathcal{H}_{\mathrm{scr}}} @<p_1<< {\mathcal{Y}_{\mathrm{scr}}} @>p_2>> {\PP^5} \\
\end{CD}.
$$


\vskip 4pt

We introduce the incidence correspondence between cubic fourfolds of discriminant $26$ and nodal septic scrolls in $\PP^5$:
$$
\begin{CD}
{|\OO_{\PP^5}(3)|} @<<< {\X:=\PP\Bigl((p_1)_*\bigl(\I_{\mathcal{Y}_{\mathrm{scr}}/\mathcal{H}_{\mathrm{scr}}\times \PP^5}\otimes p_2^*\OO_{\PP^5}(3)\bigr)\Bigr)} @>>> {\mathcal{H}_{\mathrm{scr}}} \\
\end{CD}
$$
The group $PGL(6)$ acts on the entire diagram. By quotienting out this action, if we set $\mathfrak{X}:=\mathcal{X}/PGL(6)$, we obtain two projections:
$$
\begin{CD}
{\cC_{26}} @<\pi_1<< {\mathfrak{X}} @>\pi_2>> {\mathfrak{H}_{\mathrm{scr}}} \\
\end{CD}
$$

The $21$-dimensional variety $\mathfrak{X}$ being a $\PP^{12}$-bundle over the unirational variety $\mathfrak{H}_{\mathrm{scr}}$ is unirational as well.
A general scroll $[R]\in \mathfrak{H}_{\mathrm{scr}}$ has precisely $3$ non-normal nodes. Checking that a general cubic fourfold $X\supseteq R$ is smooth, reduces to a standard Macaulay calculation. Applying (\ref{doublepoints}), we obtain that the lattice $A(X)$ contains a $2$-dimensional lattice $\langle h^2,[R]\rangle$ of discriminant $26$, therefore the map $\pi_1$ is well-defined.
Proposition \ref{scrolls1} implies  $\mbox{dim } \pi_1^{-1}([X])\leq 2$, for all $[X]\in \cC_{26}$, hence $\mathfrak{X}$ dominates $\cC_{26}$. In fact one can prove something more precise  and  establish in the process Theorem \ref{univk3}.

\begin{theorem} The incidence correspondence $\mathfrak{X}$ is birational to the universal $K3$ surface $\F_{14,1}$.
\end{theorem}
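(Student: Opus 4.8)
The plan is to compare $\mathfrak{X}$ and $\F_{14,1}$ as families over the common base $\cC_{26}$, using Hassett's birational isomorphism $\varphi\colon\F_{14}\stackrel{\cong}{\longrightarrow}\cC_{26}$ to regard the universal $K3$ surface $u\colon\F_{14,1}\to\F_{14}$ as a family whose fibre over a general $[X]\in\cC_{26}$ is the associated surface $S=\varphi^{-1}([X])$. On the other side, $\pi_1\colon\mathfrak{X}\to\cC_{26}$ is already known to be dominant with fibres of dimension at most $2$ by Proposition \ref{scrolls1}; since $\dim\mathfrak{X}=21$ while $\dim\cC_{26}=19$, the general fibre is exactly two-dimensional. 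The goal is to identify this fibre canonically with $S$ and then to verify that these fibrewise identifications assemble into a birational map of total spaces over $\cC_{26}$.

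First I would pin down the general fibre of $\pi_1$. Fix a general $[X]\in\cC_{26}$, so that $\Aut(X)$ is trivial and $F(X)\cong S^{[2]}$ with $S=\varphi^{-1}([X])$. A point of $\pi_1^{-1}([X])$ is a $3$-nodal septic scroll $R\subseteq X$; since every ruling of $R$ is a line of $X$, its curve of rulings $\Gamma_R$ is a degree-$7$ rational curve in $F(X)$. By Proposition \ref{scrolls1} every such curve equals $\Delta_p$ for a unique $p\in S$, and conversely each $\Delta_p$ is the curve of rulings of a $3$-nodal septic scroll $R_p\subseteq X$, the three nodes being forced by the double point formula (\ref{doublepoints}). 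This sets up a bijection $\pi_1^{-1}([X])\cong\{\Delta_p:p\in S\}$ which, by Corollary \ref{k326}, is the isomorphism $\pi_1^{-1}([X])\cong\mm_7(X)\cong S=u^{-1}([S,H])$; this is exactly the assertion of Theorem \ref{univk3}.

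To globalize, I would form the relative space of stable maps $\cM:=\mm_7(\mathcal F/\cC_{26})$, where $\mathcal F\to\cC_{26}$ is the relative variety of lines of the universal cubic fourfold (with fibre $F(X)$). This is an algebraic family over $\cC_{26}$; the fibrewise scroll-to-ruling correspondence of the previous paragraph exhibits it as birational to $\mathfrak{X}$, and its fibre over a general $[X]$ is the associated $K3$ surface $S$. The classifying map $\cC_{26}\dashrightarrow\F_{14}$ of this family of polarized $K3$ surfaces therefore coincides with $\varphi^{-1}$, so by the universal property of $u\colon\F_{14,1}\to\F_{14}$ (in the stacky sense) $\cM$ is birational to the pullback $(\varphi^{-1})^{*}\F_{14,1}$, hence to $\F_{14,1}$ itself. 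Chaining $\mathfrak{X}\dashrightarrow\cM\dashrightarrow\F_{14,1}$ then finishes the proof.

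The hard part is this last globalization step. Hassett's identification $F(X)\cong S^{[2]}$, and with it the labelling $R\leftrightarrow p$, is defined only at the level of moduli of weight-$2$ Hodge structures, so a priori one has merely a fibrewise correspondence. What must be checked is that these identifications vary algebraically, equivalently that the classifying map into $\F_{14}$ of the genuinely algebraic family $\cM\to\cC_{26}$ really is the inverse of $\varphi$. This reduces to the statement that the polarized $K3$ surface carried by the fibre $\mm_7(X)$ has the same period as $S=\varphi^{-1}([X])$, which is guaranteed by the Torelli-type construction underlying $\varphi$. Once this relative statement is in place, birationality of $\mathfrak{X}$ and $\F_{14,1}$ follows formally from the fibrewise isomorphism together with the birationality of the base map $\varphi$.
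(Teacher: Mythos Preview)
Your proposal uses the same core identification as the paper---a pair $(X,R)\in\mathfrak{X}$ determines the curve of rulings $\Gamma_R\subseteq F(X)$, and via Proposition~\ref{scrolls1} this equals $\Delta_p$ for a unique $p\in S=\varphi^{-1}([X])$---but the paper's argument is much more direct. Rather than passing through a relative moduli space of stable maps and invoking the universal property of $\F_{14,1}$, the paper simply defines the map $\theta:\mathfrak{X}\to\F_{14,1}$, $[X,R]\mapsto[S,H,p]$, observes it is generically injective (distinct scrolls have distinct curves of rulings, hence distinct $p$), and concludes from $\dim\mathfrak{X}=\dim\F_{14,1}=21$ and irreducibility of both sides that $\theta$ is birational. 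The point is that a generically injective rational map between irreducible varieties of equal dimension is automatically birational; no family-theoretic gluing is required. The ``hard globalization step'' you worry about is thus entirely bypassed.

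Your route also carries a small gap. To invoke the universal property of $u:\F_{14,1}\to\F_{14}$ for the family $\cM\to\cC_{26}$, you need $\cM$ to be a family of \emph{polarized} $K3$ surfaces of genus $14$, i.e.\ you need a relative ample line bundle restricting to the correct polarization on each fibre $\mm_7(X)$. You do not say where this polarization comes from intrinsically; appealing to the isomorphism $\mm_7(X)\cong S$ to import $H$ is circular, since that isomorphism is exactly what you are trying to globalize. The paper's formulation avoids this because $\theta$ is defined pointwise and the polarization on the target side is just part of the data of a point of $\F_{14,1}$. A second minor overstatement: you assert that \emph{every} $\Delta_p$ is the curve of rulings of a $3$-nodal scroll, but this is only established for general $p$ as a \emph{consequence} of the theorem (see the last sentence of the paper's proof); fortunately only generic injectivity of $\theta$ is needed, so this does not affect the argument.
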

\begin{proof}
We define a map $\theta:\mathfrak{X} \rightarrow \F_{14,1}$ as follows. We start with a pair $[X,R]\in \mathfrak{X}$ and denote by $f_R:\PP^1\rightarrow F(X)$ the rational curve of rulings described in
Proposition \ref{bisecant}. Denoting by $[S,H]:=\phi^{-1}([X])\in \F_{14}$ the polarized $K3$ surface provided by the identification (\ref{ident1}), applying Proposition \ref{scrolls1}, there exists a uniquely determined point $p\in S$ such that $\Delta_p=\Gamma_R$.

\vskip 3pt

The map $\theta$ is clearly generically injective. Since both $\mathfrak{X}$ and $\F_{14,1}$ are irreducible varieties of the same dimension $21$, it follows that $\theta$ is birational. In particular, in
the isomorphism $S\cong \mm_7(X)$ constructed in Corollary \ref{k326}, the general point on both sides corresponds to a septic scroll $R\subseteq X$ which is $3$-nodal and has no further singularities.
\end{proof}

\vskip 4pt



\section{The rationality of $\F_{14,1}$}
In this section, using in an essential way the characterization given in Proposition \ref{bisecant} of the rational curves $\Gamma_R$ of rulings of $3$-nodal scrolls $R\subseteq \PP^5$,  we show that the universal $K3$ surface of genus $14$ is rational.

We begin by recalling the structure of the moduli space of curves of genus $8$. Consider the Grassmannian $\GG(1,5)\subseteq \PP^{14}$ in its Pl\"ucker embedding. Denote by
$$\mathfrak{M}_8:=\GG \Bigl(7,\PP\Bigl(\bigwedge^2 \mathbb C^6\Bigr)\Bigr)/ PGL(6)$$ the space of codimension $7$ linear sections of $\GG(1,5)$. Mukai \cite{M6} has shown that the map
$$\mathfrak{M}_8\dashrightarrow \mm_8,
$$
sending a general $7$-plane $[\PP(V)\hookrightarrow \PP^{14}] \in \mathfrak{M}_8$ to the intersection $[\GG(1,5)\cdot \PP(V)] \in \overline {\mathcal M}_8$ viewed as a canonical curve of genus $8$, is a birational isomorphism. For more details on how to extend Mukai's isomorphism over parts of the boundary of $\mm_8$, see also \cite{FV2}.


\vskip 4pt

Recall that we introduced in Section 3 the smooth septic scroll $R'\cong \FF_1 \subseteq \PP^8$, then considered a singular scroll $R\subseteq \PP^5$, defined as the image of a linear projection
$\pi_{_{\Lambda}}:R'\rightarrow \mathbf P^5 $ whose center is a general plane $\Lambda \subset \PP^8$, which is $3$-secant to $\mbox{Sec}(R')$.
We denote by $p_1, p_2, p_3$ the three nodes of $R$ and $\{x_i,y_i\}=\pi^{-1}(p_i)$. As explained in the Introduction, $P_i\subseteq \PP^5$ denotes
the $2$-plane spanned by the rulings of $R$ passing through $p_i$, for $i=1,2,3$. The line $$L_i\subseteq \GG(1,5)\subseteq \PP^{14}$$ parametrizes the lines in the
plane $P_i$ passing through the point $p_i$. If $\Gamma=\Gamma_R\subseteq \GG(1,5)$ is the curve of rulings associated to $R$ introduced in Proposition \ref{bisecant}, then $L_i$ meets $\Gamma$ in two distinct points. We keep this notation throughout this section.

\vskip 4pt

Due to the results of the previous section,  our strategy is now to describe the family
$$
\mathcal{U} \subseteq \mbox{Hom}(\PP^1,\GG(1,5))
$$
of smooth rational septic curves $\Gamma_R\subseteq \GG(1,5)$ carrying three bisecant lines contained in $\GG(1,5)$. From Proposition \ref{bisecant} it follows that  $\mathcal{U}$ is birational to the  Hilbert scheme $\mathcal{H}_{\mathrm{scr}}$ of $3$-nodal septic scrolls in $\PP^5$. Then we show that
the quotient  $\mathcal{U}/PGL(6)$ is rational. Since $\mathcal{U}/PGL(6)$ is birational to $\mathfrak{H}_{\mathrm{scr}}$ and,
as proven in Theorem \ref{univk3}, the universal $K3$ surface of genus $14$ is a $\PP^{12}$-bundle over $\mathfrak{H}_{\mathrm{scr}}$, its rationality will follow.

\vskip 4pt

The nodal curve
$
\Gamma + L_1 + L_2 + L_3 \subseteq \langle \Gamma \rangle \cdot  \mathbf G(1,5)
$
has arithmetic genus 3. It follows from Mukai's work \cite{M1} that the intersection  $\langle \Gamma \rangle \cdot \mathbf G(1,5)$ is a canonical curve of genus $8$, provided (i) it is proper and reduced and (ii) $\mbox{dim } \langle \Gamma \rangle =7$. Using the surjectivity of the period map for polarized $K3$ surfaces of genus $8$, we shall show that both assumptions (i) and (ii) are satisfied. Granting both (i) and (ii) for the moment, we consider the canonically embedded curve in $\langle \Gamma \rangle=\PP^7$, pictured also below:
\begin{equation}\label{linint}
C:=\langle \Gamma \rangle \cdot \mathbf G(1,5) = Q + \Gamma + L_1 + L_2 + L_3.
\end{equation}
\begin{figure}[h]
\centering
  \includegraphics[width=2.3in]{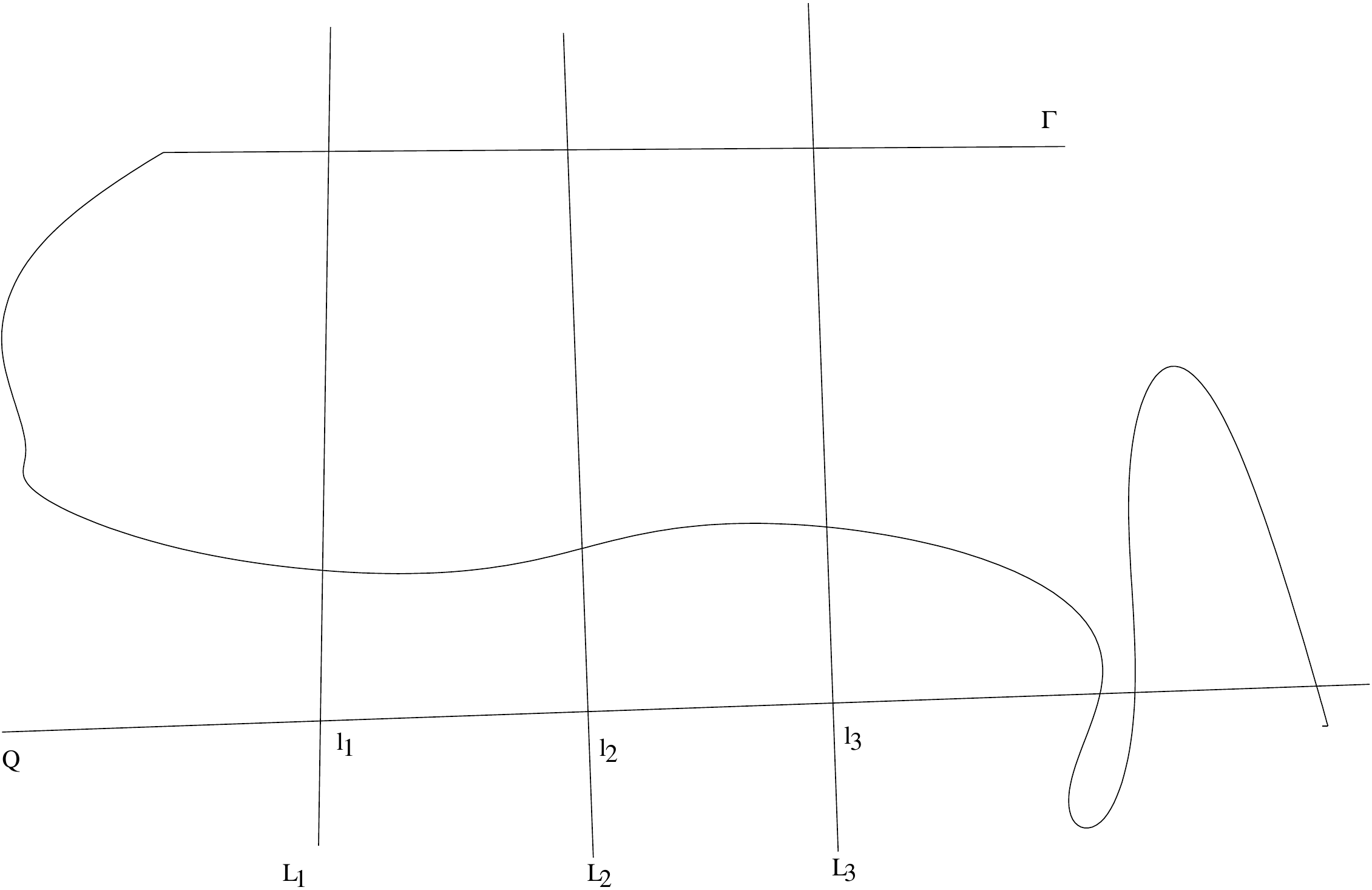}
  \caption{The canonical curve $C=\Gamma+Q+L_1+L_2+L_3.$}
  \label{fig:cover}
\end{figure}

Bertini's Theorem implies that a general $8$-dimensional space  $\langle \Gamma \rangle\subseteq \PP^8\subseteq \PP^{14}$ cuts out on $\mathbf G(1,5)$ a smooth 2-dimensional linear section
$T$, see also \cite{Ve}, Propositions 3.2 and 3.3. By the adjunction formula, $T\hookrightarrow \PP^8$ is a smooth $K3$ surface (of genus $8$) polarized by $\mathcal O_T(C)$.
We now describe the Picard lattice of $T$:

\begin{lemma}\label{intnumb}
One has the following intersection products on $T$:
$$
Q^2=-2, \ Q \cdot \Gamma = 3,\  \  Q\cdot  L_i = 1, \  \ \  \Gamma \cdot L_i  = 2, \   \ L_i \cdot L_j = -2\delta_{ij},
 \ \ \mbox{ for } i,j=1,2,3.
$$
\end{lemma}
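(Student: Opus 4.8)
The plan is to carry out the entire computation inside the Picard group of the smooth $K3$ surface $T$, on which $C$ is a hyperplane section, so that $[C]=\OO_T(1)$ and $C^2=2\cdot 8-2=14$. By the decomposition (\ref{linint}) we have the relation
$$[C]=[Q]+[\Gamma]+[L_1]+[L_2]+[L_3]$$
in $\Pic(T)$, and pairing it against any effective curve $D\subseteq T$ gives $D\cdot C=D\cdot \OO_T(1)=\deg D$, the degree being taken in the Pl\"ucker embedding. Thus the whole computation reduces to linear algebra, once the self-intersections of the five components and the mutual intersections of the lines $L_i$ with $\Gamma$ and with one another are known.

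First I would record the self-intersections supplied by adjunction on $T$: since $K_T=0$, an integral curve $D\subseteq T$ satisfies $D^2=2p_a(D)-2$. As $\Gamma$ is a smooth rational septic curve and each $L_i$ is a line, all have arithmetic genus $0$, whence $\Gamma^2=L_i^2=-2$. The Pl\"ucker degrees are $\deg \Gamma=7$ and $\deg L_i=1$; since $\deg C=C^2=14$, the residual curve satisfies $\deg Q=14-7-3=4$. Two intersection numbers remain to be pinned down geometrically. The lines $L_i$ and $L_j$ with $i\neq j$ are distinct pencils of lines in $\PP^5$: here $L_i$ consists of the lines through $p_i$ lying in the plane $P_i$, and similarly for $L_j$. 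A common member would be the line $\langle p_i,p_j\rangle$ lying in $P_i\cap P_j$; but for a general $3$-secant plane $\Lambda$ the planes $P_i$ and $P_j$ are disjoint, so $L_i\cap L_j=\emptyset$ and $L_i\cdot L_j=0$. By Proposition \ref{bisecant} each $L_i$ is a genuine bisecant of $\Gamma$ meeting it in two distinct points; since $T$ is smooth and the configuration is general, these two intersections are transverse, so $\Gamma\cdot L_i=2$. Combined with $L_i^2=-2$ this gives $L_i\cdot L_j=-2\delta_{ij}$.

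Finally I would substitute these values into the three relations obtained by pairing the decomposition with $\Gamma$, with $L_i$, and with $Q$:
\begin{align*}
7&=\Gamma\cdot Q+\Gamma^2+\textstyle\sum_{i}\Gamma\cdot L_i=\Gamma\cdot Q+4,\\
1&=L_i\cdot Q+\Gamma\cdot L_i+L_i^2+\textstyle\sum_{j\neq i}L_i\cdot L_j=L_i\cdot Q,\\
4&=Q^2+Q\cdot\Gamma+\textstyle\sum_i Q\cdot L_i.
\end{align*}
The first two give $Q\cdot\Gamma=3$ and $Q\cdot L_i=1$, and the third then yields $Q^2=4-3-3=-2$; note that $Q^2=-2$ is obtained here without assuming in advance that $Q$ is irreducible or smooth. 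The only genuinely geometric inputs are the disjointness of the pencils $L_i,L_j$ and the transversality of the bisecant intersections $\Gamma\cap L_i$, and this is the step I expect to require the most care: both rely on the general position of the $3$-secant plane $\Lambda$ furnished by Propositions \ref{transv3} and \ref{bisecant}, and one must check that the local intersection multiplicities are exactly $1$ so that $\Gamma\cdot L_i=2$ and $L_i\cdot L_j=0$ hold on the nose.
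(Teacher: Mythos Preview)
Your proof is correct and follows essentially the same approach as the paper: both exploit that $C\cdot D=\deg D$ for each component $D$, use adjunction on the $K3$ surface to get the self-intersections, and then read off the remaining numbers from the decomposition $C=Q+\Gamma+L_1+L_2+L_3$. The one place where the arguments differ slightly is the disjointness $L_i\cdot L_j=0$ for $i\neq j$: you assert that $P_i\cap P_j=\emptyset$ for general $\Lambda$, while the paper gives a sharper justification by observing that a common point of $L_i$ and $L_j$ would force the four rulings of $R'$ over $x_i,y_i,x_j,y_j$ to span only a $\PP^6$ in $\PP^8$, whereas $h^0\bigl(R',\OO_{R'}(1)(-4(\ell-E))\bigr)=h^0(R',\OO_{R'}(E))=1$ shows they always span a $\PP^7$. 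This is worth incorporating, since your bare generality claim for $P_i\cap P_j=\emptyset$ is stronger than needed and not obviously covered by Propositions~\ref{transv3} and~\ref{bisecant}; otherwise your write-up is complete, and your explicit flagging of the transversality issue for $\Gamma\cdot L_i=2$ is a point the paper leaves implicit.
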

\begin{proof} The generality assumptions ensure that $L_i$ and $L_j$ are disjoint lines, for $i\neq j$. Else, if $L_i\cap L_j\neq \emptyset$, then $\langle p_i,p_j\rangle \subseteq P_i\cap P_j\subseteq \PP^5$. It follows that the four rulings of $R'$ passing through the points $x_i,y_i,x_j,y_j$ respectively,  span a $6$-dimensional space in $\PP^8$, which is impossible for
$$h^0\Bigl(R',\OO_{R'}(1)(-4(\ell-E))\Bigr)=h^0(R',\OO_{R'}(E))=1,$$
where recall that $\ell, E\in \mbox{Pic}(R')$ denote the line class and the exceptional divisor respectively. This implies that there exists a unique hyperplane in $\PP^8$ containing the four
rulings, therefore they must span a $7$-dimensional linear space.

Since $L_i^2=-2$, by intersecting (\ref{linint}) with $L_i$, we obtain
$Q\cdot L_i=1$. Furthermore $7=\Gamma \cdot C$ and since $\Gamma^2=-2$, we obtain $\Gamma \cdot Q=3$. Finally, $C\cdot Q=\mbox{deg}(Q)=4$, therefore $Q^2+\Gamma\cdot Q+3=4$, implying $Q^2=-2$ and thus finishing the proof.
\end{proof}

In particular $Q\subseteq \langle T\rangle =\PP^8$ is a reduced, connected quartic curve of arithmetic genus zero. Since
$C - Q \equiv \Gamma + L_1 + L_2 + L_3$, we obtain $h^0(T, \OO_T(C-Q))=4$.  The next lemma summarizes the situation.

\begin{lemma}\label{quarticzero} The span $\langle Q \rangle$ is 4-dimensional and $Q$ is a connected nodal quartic curve with $p_a(Q) = 0$. \end{lemma}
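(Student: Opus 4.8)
The plan is to read off every assertion from the intersection table of Lemma \ref{intnumb}, together with the fact that $T\subseteq\PP^8$ is a linearly normal $K3$ surface of genus $8$, embedded by the complete linear system $|\OO_T(C)|$. First I would dispose of the numerical points. Since $T$ is a smooth $K3$ surface, $K_T=0$, so adjunction yields $p_a(Q)=1+\frac{1}{2}Q^2=1+\frac{1}{2}(-2)=0$ from $Q^2=-2$. The degree of $Q$ in the Pl\"ucker embedding is $C\cdot Q=Q^2+\Gamma\cdot Q+\sum_{i}L_i\cdot Q=-2+3+3=4$, so $Q$ is a quartic. Finally, $Q$ is a union of irreducible components of the curve $C=\GG(1,5)\cdot\langle\Gamma\rangle$, and, as established in the preceding discussion, $C$ is a reduced nodal canonical curve of genus $8$; hence $Q$ is automatically reduced and has at worst nodes, namely those nodes of $C$ that happen to lie on $Q$.

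Next I would pin down the span. Because $T\subseteq\PP^8$ is linearly normal, restriction of linear forms identifies $H^0(\PP^8,\OO(1))$ with $H^0(T,\OO_T(C))$, and a hyperplane contains $Q$ precisely when the corresponding section lies in $H^0\bigl(T,\OO_T(C)\otimes\I_{Q/T}\bigr)=H^0(T,\OO_T(C-Q))$, using that $Q$ is an effective Cartier divisor on the smooth surface $T$. As recorded just before the statement, $C-Q\equiv\Gamma+L_1+L_2+L_3$ and $h^0(T,\OO_T(C-Q))=4$ (Riemann--Roch gives $(C-Q)^2=4$, hence $\chi=4$, and $H^1$ vanishes). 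The linear forms vanishing on $Q$ are exactly those vanishing on $\langle Q\rangle$, so if $\langle Q\rangle=\PP^k$ this space has dimension $8-k$; therefore $8-k=4$, i.e. $\langle Q\rangle=\PP^4$ is $4$-dimensional.

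The remaining and, I expect, only delicate point is connectedness. From the structure sequence $0\to\OO_T(-Q)\to\OO_T\to\OO_Q\to 0$, together with $H^1(\OO_T)=0$ and $H^0(\OO_T(-Q))=0$, one reads off $h^0(\OO_Q)=1+h^1(\OO_T(-Q))=1+h^1(\OO_T(Q))$ by Serre duality; hence $Q$ is connected if and only if $h^1(\OO_T(Q))=0$, equivalently, since $\chi(\OO_T(Q))=1$ and $h^2=0$, if and only if $|Q|$ is rigid. The subtlety is that $p_a(Q)=0$, $\deg Q=4$ and non-degeneracy in $\PP^4$ do not by themselves force connectedness: a genus-$1$ curve meeting a disjoint rational tail is consistent with all three conditions, and even with $Q^2=-2$ on a $K3$. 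To exclude such a splitting I would use that, for a general member $T$ of the family, the generality of the construction forces the Picard lattice to be the rank-$5$ lattice spanned by $\Gamma,L_1,L_2,L_3,Q$ with Gram matrix given by Lemma \ref{intnumb}; a finite check shows its effective cone admits no decomposition $Q\equiv A+B$ with $A^2=0$, $B^2=-2$ and $A\cdot B=0$, so $|Q|$ has no positive-dimensional part and $Q$ is rigid, hence connected. Alternatively, since the family of scrolls is irreducible, it suffices to certify on a single explicit member, by the same Macaulay computation used in Proposition \ref{vanish}, that the residual quartic $Q$ is connected, and then to spread the conclusion out to the general member.
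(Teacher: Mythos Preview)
Your argument is correct and tracks the paper closely: the numerical assertions ($p_a(Q)=0$, $\deg Q=4$, nodality) and the span computation via $h^0(T,\OO_T(C-Q))=4$ are exactly what the paper uses. In fact the paper gives no separate proof of this lemma at all---it is explicitly labelled a summary of the paragraph preceding it---and the one genuinely nontrivial point, connectedness, is established only by the \emph{subsequent} construction: the authors invoke surjectivity of the period map to produce a $K3$ surface with $\mathrm{Pic}(T)\cong\mathbb L$, on which each of $\Gamma,Q,L_1,L_2,L_3$ is forced to be a smooth irreducible $(-2)$-curve (else $\mathrm{Pic}(T)$ would be strictly larger). Your proposed route, arguing that a general member of the family already has $\mathrm{Pic}(T)=\mathbb L$ and then ruling out decompositions $Q\equiv A+B$, is a Noether--Lefschetz variant of the same idea; it works, but the paper's version is slightly cleaner since it sidesteps having to justify minimality of the Picard rank for the general member.

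One small point: your parenthetical ``$H^1$ vanishes'' for $\OO_T(C-Q)$ is asserted rather than argued. The most direct justification is that $C-Q=\Gamma+L_1+L_2+L_3$ is a connected reduced curve (each $L_i$ meets $\Gamma$), so from $0\to\OO_T\to\OO_T(C-Q)\to\omega_{C-Q}\to 0$ and $H^1(\OO_T)=0$ one reads off $h^0(\OO_T(C-Q))=1+p_a(C-Q)=1+3=4$ directly, and $h^1=0$ follows. The paper asserts $h^0=4$ with equal brevity, so you are not behind here.
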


In fact, we shall construct a $K3$ surface $T$, such that the curve $Q$ described in Lemma \ref{quarticzero} is actually smooth.

\vskip 3pt

To establish the validity of the assumptions (i) and (ii) and thus the existence of the special $K3$ surface $T$, we use Hodge theory. We consider the following sublattice
\begin{equation}\label{lattice}
\mathbb L := \mathbb Z \cdot [Q] \oplus  \mathbb Z \cdot [\Gamma] \oplus \mathbb Z \cdot [L_1] \oplus \mathbb Z \cdot [L_2] \oplus \mathbb Z\cdot [L_3]
\end{equation}
generated by the $(-2)$ classes corresponding to  $Q, \Gamma, L_1, L_2$ and $L_3$ respectively, and with intersection pairing as given in Lemma \ref{intnumb}.  We  invoke  the
surjectivity of the period map for $K3$ surfaces. The rank $5$ lattice $\mathbb L$ is even and has signature $(1,4)$. Applying \cite{Mo} Corollary 2.9, there exists a smooth
$K3$ surface $T$, such that $\mbox{Pic}(T)\cong \mathbb L$. We define the following class on $T$
$$C:=\Gamma+Q+L_1+L_2+L_3.$$
The genus zero curves $\Gamma, Q, L_1, L_2, L_3\subseteq T$ cannot have multiple components, for that would make $\mbox{Pic}(T)$ larger than $\mathbb L$, therefore they are all smooth,
rational curves on $T$.

\begin{lemma}\label{veryample}
The linear system $|\OO_T(C)|$ is very ample.
\end{lemma}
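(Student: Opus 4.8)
The plan is to verify very ampleness of $\OO_T(C)$ on the $K3$ surface $T$ using the standard criterion of Saint-Donat: a nef line bundle $L$ with $L^2\geq 4$ on a $K3$ surface fails to be very ample precisely when one of a short list of obstructions occurs, namely (a) there is a divisor class $E$ with $E^2=0$ and $E\cdot L\in\{1,2\}$, or (b) there is a class $B$ with $B^2=-2$ and $B\cdot L=0$, or (c) $L\equiv 2B$ with $B^2=2$, or an irreducible genus-$2$ hyperelliptic situation. Since by Lemma \ref{intnumb} we know the full intersection form on $\Pic(T)\cong\mathbb L$, and $C^2=2\cdot 8-2=14$, the entire verification reduces to a finite lattice-theoretic check: I would first compute $C\cdot Q$, $C\cdot\Gamma$, $C\cdot L_i$ directly from (\ref{linint}) and Lemma \ref{intnumb} to confirm that $C$ is nef (each of the five $(-2)$-generators meets $C$ non-negatively), and that $C$ is not divisible in $\mathbb L$.

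The key steps, in order, are as follows. First I would establish that $C$ is base-point free, equivalently that $|\OO_T(C)|$ has no fixed part and $C$ is nef with $C^2>0$; here I use that $C\cdot\Gamma$, $C\cdot Q$ and $C\cdot L_i$ are all strictly positive (computed from the pairing matrix), so no effective $(-2)$-curve is orthogonal to $C$, ruling out obstruction (b). Second, to rule out obstruction (a) I must show there is no class $E\in\mathbb L$ with $E^2=0$ and $0<E\cdot C\leq 2$; writing $E=aQ+b\Gamma+\sum c_iL_i$ and imposing $E^2=0$ against the explicit Gram matrix, this becomes a Diophantine problem, and the positivity of $C^2=14$ together with the Hodge index theorem bounds the coefficients, so only finitely many candidates need be excluded. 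Third, I rule out the hyperelliptic/double-cover obstruction (c) by observing that $C$ is not $2$-divisible in $\mathbb L$ (its coefficient vector $(1,1,1,1,1)$ is primitive), so $C\not\equiv 2B$.

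A cleaner route, which I would prefer to present, is to exhibit $C$ directly as a hyperplane section of $T$ under the embedding $T\hookrightarrow\PP^8$ coming from the linear section of $\GG(1,5)$. Indeed $T$ was constructed in (\ref{linint}) as $T=\langle\Gamma\rangle\cdot\GG(1,5)$, a smooth $2$-dimensional linear section polarized by $\OO_T(C)=\OO_T(1)$, the restriction of the Plücker $\OO_{\PP^8}(1)$. Since the Plücker embedding of $\GG(1,5)$ restricted to a transverse linear section of the correct dimension gives a projectively normal, linearly normal $K3$ surface of genus $8$ (by Mukai's theorem \cite{M6} and Bertini, as already invoked before Lemma \ref{intnumb}), the polarization $\OO_T(C)$ is the restriction of an ample line bundle under an embedding, hence very ample. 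The two constructions of $T$ — the abstract Hodge-theoretic one via \cite{Mo} and the geometric linear-section one — agree because both have Picard lattice $\mathbb L$ with the same polarization class $C$, so very ampleness transfers.

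The main obstacle is confirming that the abstractly constructed $K3$ surface of \cite{Mo} genuinely coincides with a smooth transverse linear section of $\GG(1,5)$, rather than merely sharing the lattice $\mathbb L$; the delicate point is that the period-surjectivity argument only guarantees a $K3$ with $\Pic(T)\cong\mathbb L$, and one must ensure the polarization $C$ lands in the Mukai locus where $|\OO_T(C)|$ realizes $T$ as a quadratic linear section of the Grassmannian. I expect this is handled by checking that $C$ is ample with $C^2=14$ and primitive, so that $(T,\OO_T(C))\in\F_8$ is a \emph{general} polarized $K3$ of genus $8$ in Mukai's sense, whence Mukai's structure theorem applies and forces the Grassmannian model; the remaining effort is the finite Saint-Donat lattice check to certify ampleness and rule out the exceptional cases, which is routine given the explicit Gram matrix.
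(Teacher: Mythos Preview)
Your first route via Saint-Donat/Reider is exactly what the paper does: it checks that $\mathbb L$ contains no class $E$ with $E^2=0$ and $E\cdot C\in\{1,2\}$ and no $(-2)$-class $F$ with $F\cdot C=0$, reducing everything to a short Diophantine computation in the explicit Gram matrix. That part of your proposal is correct and matches the paper essentially verbatim.

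Your preferred second route, however, is circular in the paper's logic. The surface $T$ in Lemma~\ref{veryample} is \emph{not} the geometric linear section from (\ref{linint}); it is the abstract $K3$ produced by Morrison's surjectivity just before the lemma, introduced precisely to \emph{establish} that the geometric configuration (assumptions (i) and (ii) for a general $\Gamma$) actually occurs. The Grassmannian embedding of this abstract $T$ is the content of the \emph{subsequent} Proposition~\ref{grassm}, so it is not available here. Your final paragraph recognizes this obstacle, but your proposed fix---check that $C$ is ample and primitive, then invoke Mukai---simply defers back to the Saint-Donat lattice check you already outlined in Route~1; once that check is done, Route~2 contributes nothing further. (Note also that ``both have Picard lattice $\mathbb L$, so they agree'' is not a valid inference: the moduli of $\mathbb L$-polarized $K3$s is $15$-dimensional. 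What \emph{is} true is that very ampleness of $C$ on a $K3$ with $\Pic\cong\mathbb L$ is determined by the lattice alone---but that is precisely the Saint-Donat criterion, i.e.\ Route~1 again.)
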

\begin{proof} We use Reider's Theorem \cite{R}, which, in the case of $K3$ surfaces, had been proven before in \cite{SD}. It suffices to show that there exists no curve $E$ on $T$ with $E^2=0$ and $E\cdot C\in \{1,2\}$, nor a curve $F$ on $T$
with $F^2=-2$ and
$F\cdot C=0$. We prove the first statement, the second follows similarly. Assuming there is such a curve $E$, we express it as an integral combination
$E\equiv x\Gamma+yQ+z_1L_1+z_2L_2+z_3L_3$ of the generators of $\mbox{Pic}(T)$. If $C\cdot E=1$, we obtain
$$-15x^2-12xy-5y^2+2x+y=z_1^2+z_2^2+z_3^2.$$
By comparing the signs of the two sides, one concludes that this equation has no integral solutions. The case $C\cdot E=2$ is similar. Finally, if $
F\equiv x\Gamma+yQ+z_1L_1+z_2L_2+z_3L_3$ is a $(-2)$-curve with $C\cdot F=0$, we obtain
$$-15x^2-12xy-5y^2+1=z_1^2+z_2^2+z_3^2,$$
which implies $x=y=0$ and, say $z_2=z_3=0$ and then $z_1=1$. Thus $F=L_1$, but $C\cdot L_1=1$, hence this case does not appear.
We conclude that $C$ is very ample.
\end{proof}

We show that the $K3$ surface $T$ constructed in Lemma \ref{veryample} is a linear section of $\GG(1,5)$. In particular, Mukai's results \cite{M6} will apply for
its hyperplane section $C$.

\begin{proposition}\label{grassm}
The $K3$ surface $T$ carries a globally generated rank two vector bundle $T$ with $\mathrm{det}(T)=\OO_T(C)$, providing an embedding $T\hookrightarrow \GG(1,5)$ such that
$$\langle T\rangle \cdot \GG(1,5)=S.$$
\end{proposition}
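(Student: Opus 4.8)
The plan is to realize $T$ as a transversal linear section of $\GG(1,5)=\GG(2,6)\subseteq\PP^{14}$ by producing its \emph{Mukai bundle} $\E$ and then appealing to Mukai's structure theorem \cite{M6}. Since $C^2=14$, the polarized surface $(T,C)$ has genus $8$, and the bundle $\E$ I am looking for should have rank two with Mukai vector $v(\E)=(2,[C],4)$, i.e. $c_1(\E)=[C]$ and $c_2(\E)=5$; then $\chi(\E)=4+\tfrac12 C^2-c_2=6$ by Riemann--Roch, while the Mukai pairing gives $v(\E)^2=C^2-2\cdot 2\cdot 4=14-16=-2$. First I would invoke the theory of moduli of sheaves on a $K3$ surface: because $v(\E)$ is primitive of square $-2$, the moduli space of $C$-stable sheaves with this invariant is a single reduced point, so there is a unique stable, hence simple and rigid, sheaf $\E$ with these Chern classes, and a simple rigid sheaf on a $K3$ surface is locally free. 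A short stability argument then gives $h^0(\E^\vee)=0$, since a nonzero section would yield a subsheaf $\OO_T\hookrightarrow\E^\vee$ of slope $0$ exceeding $\mu_C(\E^\vee)=-\tfrac12 C^2<0$; hence $h^2(\E)=h^0(\E^\vee)=0$ by Serre duality, and combined with $h^1(\E)=0$ (which I would take from \cite{M6}) this yields $h^0(\E)=6$.

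The crux will be to show that $\E$ is globally generated and defines an embedding. Following Mukai's genus $8$ analysis, I expect this to reduce to a Brill--Noether genericity statement: the general curve in $|C|$ should carry no $g^2_7$. By the description of special linear series on curves lying on a $K3$ surface (Green--Lazarsfeld, Donagi--Morrison), such a net would be cut out by a class $A\in\Pic(T)=\mathbb L$ with $A^2=2$ and $A\cdot C=7$, and the same holds for its residual $C-A$; so the whole question collapses to a finite lattice computation entirely parallel to Lemma \ref{veryample}. Writing $A=x\Gamma+yQ+z_1L_1+z_2L_2+z_3L_3$ and using Lemma \ref{intnumb}, the condition $A\cdot C=7$ reads $7x+4y+(z_1+z_2+z_3)=7$, and substituting into $A^2=2$ gives
$$z_1^2+z_2^2+z_3^2=-15x^2-5y^2-12xy+14x+7y-1.$$
Since the binary form $15x^2+12xy+5y^2$ is positive definite, the right-hand side is nonnegative only on a bounded region, whose sole integral point $(x,y)=(0,1)$ forces $z_1+z_2+z_3=3$ while $z_1^2+z_2^2+z_3^2=1$, which is impossible. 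I therefore expect to conclude that no such $A$ exists, that the general hyperplane section of $T$ has no $g^2_7$, and that $\E$ is globally generated.

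Finally, once $\E$ is globally generated with $h^0(\E)=6$, it defines a morphism $\varphi_\E:T\rightarrow\GG(2,H^0(\E)^\vee)=\GG(1,5)$ with $\varphi_\E^*\OO_{\GG(1,5)}(1)=\det\E=\OO_T(C)$. I would check that the multiplication map $\bigwedge^2 H^0(\E)\rightarrow H^0(\OO_T(C))$ is surjective, so that the composite of $\varphi_\E$ with the Pl\"ucker embedding is the complete map $\varphi_{|C|}$, which is a closed embedding because $\OO_T(C)$ is very ample by Lemma \ref{veryample}; hence $\varphi_\E$ is itself an embedding and $\langle T\rangle=\PP^8$. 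Comparing degrees — both $T$ and the codimension six section $\langle T\rangle\cdot\GG(1,5)$ have degree $14=\deg\GG(1,5)$, with $T$ smooth and contained in the section — the section must be reduced of pure dimension two and equal to $T$, which is exactly the desired conclusion. The hard part throughout is the global generation of $\E$, that is, the no-$g^2_7$ verification secured by the lattice computation above; everything else is formal once Mukai's theorem is in place.
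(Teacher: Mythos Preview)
Your approach and the paper's are essentially the same idea---verify Mukai's Brill--Noether criterion for the genus~$8$ polarized $K3$ surface $(T,\OO_T(C))$ by a lattice computation in $\mathbb L$---but the two packagings differ in useful ways.

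The paper proceeds more directly: it invokes Mukai \cite{M7}, which says that if $(T,\OO_T(C))$ is \emph{Brill--Noether general} (i.e.\ $h^0(M)\,h^0(N)<h^0(C)$ for every splitting $C\equiv M+N$), then the rigid rank~$2$ bundle exists, is globally generated, and realizes $T$ as a linear section of $\GG(1,5)$. Via \cite{GLT} Lemma~2.8, Brill--Noether generality is reduced to checking that $\mathbb L$ contains no class $D$ with $D^2=2$ and $D\cdot C\in\{6,7\}$, and no class with $D^2=0$ and $D\cdot C\leq 4$; the paper treats the case $(D^2,D\cdot C)=(2,7)$ in detail and declares the others similar. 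So the paper's reduction demands \emph{several} lattice checks, not just the one you carried out.

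Your route---first producing $\E$ as the unique stable sheaf with $v=(2,[C],4)$, then reducing global generation to the absence of a $g^2_7$ on the general $C_0\in|C|$, then invoking Green--Lazarsfeld/Donagi--Morrison to pull the $g^2_7$ back to a class $A$ with $A^2=2$, $A\cdot C=7$---is more hands-on but has a soft spot: the Donagi--Morrison style results you cite are sharpest for pencils, and the passage from ``general $C_0$ has a $g^2_7$'' to ``there exists $A\in\Pic(T)$ with $A^2=2$, $A\cdot C=7$'' is not quite immediate (the Lazarsfeld--Mukai bundle of a net has rank~$3$, and its destabilization can produce other divisor classes). The paper's surface-level BN-generality criterion sidesteps this by listing \emph{all} the offending classes at once. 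In practice the missing cases $(D^2,D\cdot C)=(2,6)$ and $D^2=0$, $D\cdot C\leq 4$ are handled by the same positive-definiteness argument you used, so closing the gap is routine. Incidentally, your constant term $-1$ in the equation $z_1^2+z_2^2+z_3^2=-15x^2-5y^2-12xy+14x+7y-1$ is the correct one; the paper's displayed $+1$ appears to be a typo.
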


\begin{proof}
We use \cite{M7} and need to show that the polarized $K3$ surface $(T,\OO_T(C))$ is \emph{Brill-Noether general}, that is, for all pairs of line bundles $M,N$ on $T$ such that
$M\otimes N=\OO_T(C)$, one has $h^0(T,M)\cdot h^0(T,N)<h^0(T,C)$. Under these circumstances, it is shown in \emph{loc.cit.} that $T$ carries a rigid, globally generated, stable rank $2$
vector bundle $E$ with $h^0(T,E)=6$ and
$\mbox{det}(E)=\OO_T(C)$, inducing a map $\varphi_{E}:T\rightarrow \GG(1,5)$. Reasoning along the lines of \cite{M7} Theorem 3.10, the $K3$ surface $T$ is then a linear section of
$\GG(1,5)$ in its Pl\"ucker embedding, that is,
$T=\GG(1,5)\cdot \langle T\rangle.$

\vskip 4pt

To establish the Brill-Noether generality of $(T,\OO_T(C))$, we use for instance \cite{GLT} Lemma 2.8.  It suffices to show that in the lattice $\mathbb L$ there exists no vector $D$
such that $D^2=2$ and $D\cdot C\in \{7,6\}$, nor is there a vector $D$ with $D^2=0$ and $D\cdot C\leq 4$.

\vskip 4pt

We treat in detail only the first case, the remaining ones being similar. We write
$$D\equiv x\Gamma+yQ+z_1L_1+z_2L_2+z_3L_3.$$ The conditions $D^2=2$ and $D\cdot C=7$ translate into the equalities $z_1+z_2+z_3+7x+4y=7$ and $-15x^2-5y^2-12xy+14x+7y+1=z_1^2+z_2^2+z_3^2\geq 0$. It is elementary to see that there are no integral solutions.
\end{proof}

Using Proposition \ref{grassm}, we conclude that the intersection (\ref{linint}) corresponding to a general curve $\Gamma_R\in \mathcal{U}$ corresponds to a
semistable canonical curve of genus $8$.

\vskip 5pt

It will be useful to have a criterion for determining when the  curve $\Gamma$ spans a space of maximal possible dimension in the Pl\"ucker space $\PP^{14}\supseteq \GG(1,5)$. To that end, recall that the Pl\"ucker embedding of the dual Grassmannian
$\GG(1,5)^{\vee}=\mathbf G(3,5) \hookrightarrow \bigl(\PP^{14}\bigr)^{\vee} $
assigns to a point $p\in \GG(1,5)^{\vee}$ corresponding to a $3$-plane $\PP^3_p\subseteq \PP^5$ the Schubert cycle
$$\sigma_p:=\bigl\{\ell\in \GG(1,5): \ell\cap \PP^3_p\neq \emptyset\bigr\}.$$ Note that  $\dim \langle \Gamma \rangle + 1 = \codim \langle \Gamma \rangle^{\perp}$. Setting
$$
W^1(\Gamma) := \mathbf G(3,5) \cap \langle \Gamma \rangle^{\perp} = \bigl\{p \in \GG(3,5): \Gamma \subseteq \sigma_p \bigr\},
$$
for dimension reasons, the next lemma follows immediately:
\begin{lemma} Assume $W^1(\Gamma)$ is finite. Then $\dim \langle \Gamma \rangle = 7$.
\end{lemma}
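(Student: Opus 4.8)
The plan is to deduce the statement purely from the classical projective dimension theorem, applied to the intersection $W^1(\Gamma)=\GG(3,5)\cap\langle\Gamma\rangle^{\perp}$ living in the dual projective space $(\PP^{14})^{\vee}$. Everything reduces to assembling the right dimension inputs and then reading off the inequality the finiteness hypothesis contradicts.

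First I would record the two numerical facts needed. On the one hand, $\Gamma$ is an irreducible nondegenerate curve of degree $7$ inside its own span; since a nondegenerate irreducible curve in $\PP^n$ has degree at least $n$, this gives the upper bound $\dim\langle\Gamma\rangle\le 7$. On the other hand, the dual Grassmannian $\GG(3,5)\subseteq(\PP^{14})^{\vee}$ is an irreducible subvariety of dimension $8$. Using the identity $\codim\langle\Gamma\rangle^{\perp}=\dim\langle\Gamma\rangle+1$ recorded just above the lemma, the linear subspace $\langle\Gamma\rangle^{\perp}\subseteq(\PP^{14})^{\vee}$ has dimension $13-\dim\langle\Gamma\rangle$, which is at least $6$ by the upper bound.

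Next I would invoke the projective dimension theorem for the intersection of an irreducible projective variety with a linear subspace. Since
$$\dim\GG(3,5)+\dim\langle\Gamma\rangle^{\perp}=8+\bigl(13-\dim\langle\Gamma\rangle\bigr)=21-\dim\langle\Gamma\rangle\ge 14,$$
the intersection $W^1(\Gamma)$ is nonempty, and moreover every irreducible component of it has dimension at least $\dim\GG(3,5)+\dim\langle\Gamma\rangle^{\perp}-14=7-\dim\langle\Gamma\rangle$. If $W^1(\Gamma)$ is finite, this lower bound forces $7-\dim\langle\Gamma\rangle\le 0$, that is $\dim\langle\Gamma\rangle\ge 7$; combined with $\dim\langle\Gamma\rangle\le 7$ we conclude $\dim\langle\Gamma\rangle=7$.

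There is essentially no obstacle here, which is why the author calls the conclusion immediate: the whole argument is a dimension count. The only points worth stating with care are the nonemptiness of $\GG(3,5)\cap\langle\Gamma\rangle^{\perp}$ and the lower bound on the dimension of each of its components, both of which are instances of the standard dimension theorem for intersecting a projective variety with a linear subspace, together with the degree bound $\dim\langle\Gamma\rangle\le 7$ coming from $\deg\Gamma=7$.
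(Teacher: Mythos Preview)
Your argument is correct and is exactly the ``dimension reasons'' the paper invokes: you combine the projective dimension theorem for $\GG(3,5)\cap\langle\Gamma\rangle^{\perp}$ in $(\PP^{14})^{\vee}$ with the degree bound $\dim\langle\Gamma\rangle\le 7$ to pin down the equality. This is precisely what the paper leaves implicit, so there is nothing to add.
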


\vskip 4pt

Keeping the previous notation, let $f_R: \PP^1 \to \GG(1,5)$ be a sufficiently general element of $\mathcal U$ and set again $\Gamma=\Gamma_R$. Then under the assumption  $R'=S_{3,4}$, we can prove that:
\begin{theorem}\label{finiterulings}
 The set $W^1(\Gamma)$ is finite. In particular $\dim \langle \Gamma \rangle = 7$ and $\Gamma$ is a rational normal septic curve.
\end{theorem}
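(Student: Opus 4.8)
The plan is to prove finiteness of $W^1(\Gamma)$ by computing the span of $\Gamma$ directly and showing that $\dim\langle\Gamma\rangle=7$. Once this is known, the orthogonal $\langle\Gamma\rangle^{\perp}$ is a $\PP^{6}\subseteq(\PP^{14})^{\vee}$, and since $\GG(3,5)\subseteq(\PP^{14})^{\vee}$ has dimension $8$, the locus $W^1(\Gamma)=\GG(3,5)\cap\langle\Gamma\rangle^{\perp}$ is an intersection of complementary expected dimension $8+6-14=0$. For a sufficiently general $f_R\in\mathcal U$ this intersection is transverse (in fact of length $\deg\GG(1,5)=14$), which gives the desired finiteness; moreover $\dim\langle\Gamma\rangle=7$ forces the irreducible degree-$7$ rational curve $\Gamma$ to be a rational normal septic curve, and recovers a fortiori the preceding lemma. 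The hypothesis $R'=S_{3,4}$ is exactly what makes the span computation transparent, so I would carry it out first on the smooth scroll in $\PP^8$ and then project.

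Writing $S_{3,4}=\PP(\OO_{\PP^1}(3)\oplus\OO_{\PP^1}(4))$, its rulings are the lines $\tilde\ell_t=\langle e(t),q(t)\rangle$, where $e(t)=(1,t,t^2,t^3)$ sweeps out the directrix twisted cubic in the block $\PP^{3}_{y}$ and $q(t)=(1,t,t^2,t^3,t^4)$ sweeps out the quartic directrix in the complementary block $\PP^{4}_{x}$. Since the two blocks are coordinate-disjoint, the Pl\"ucker vector of $\tilde\ell_t$ is $e(t)\wedge q(t)=\sum_{i=0}^{3}\sum_{j=0}^{4}t^{i+j}\,y_i\wedge x_j=\sum_{s=0}^{7}t^{s}v_s$, with $v_s:=\sum_{i+j=s}y_i\wedge x_j$. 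As $i+j$ runs through every value $0,\dots,7$, the vectors $v_0,\dots,v_7$ are linearly independent, so the curve of rulings of $R'$ spans the $\PP^{7}=\PP(U)$ with $U:=\langle v_0,\dots,v_7\rangle\subseteq\bigwedge^{2}\mathbb{C}^{9}$, and is already a rational normal septic curve in $\GG(1,8)$.

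The projection $\pi_{\Lambda}$ is induced by a surjection $L\colon\mathbb{C}^{9}\to\mathbb{C}^{6}$ with $\ker L=\hat\Lambda=:K$ of dimension $3$, and on Pl\"ucker coordinates it acts through $\bigwedge^{2}L\colon\bigwedge^{2}\mathbb{C}^{9}\to\bigwedge^{2}\mathbb{C}^{6}$, carrying the ruling curve of $R'$ to $\Gamma_R$. Consequently $\langle\Gamma\rangle=\PP\bigl((\bigwedge^{2}L)(U)\bigr)$, and $\dim\langle\Gamma\rangle=7$ if and only if $\bigwedge^{2}L$ is injective on $U$, i.e. $U\cap\ker(\bigwedge^{2}L)=0$. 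Here $\ker(\bigwedge^{2}L)=K\wedge\mathbb{C}^{9}$ has dimension $36-15=21$ while $\dim U=8$ inside the $36$-dimensional $\bigwedge^{2}\mathbb{C}^{9}$, so the intersection has expected dimension $8+21-36<0$. The delicate point, which I expect to be the crux, is that $K=\hat\Lambda$ is not an arbitrary $3$-plane but the cone over a general $3$-secant plane to $\mathrm{Sec}(R')$: a special $\Lambda$ could make $\langle\Gamma'\rangle$ meet the center of projection and drop the span. I would settle this by noting that $\{K:U\cap(K\wedge\mathbb{C}^{9})\neq 0\}$ is a proper closed subvariety of the Grassmannian of $3$-planes in $\mathbb{C}^{9}$ by the above dimension count, and then exhibiting one admissible $\Lambda$ — with explicit points $a_i$ on secant lines of $S_{3,4}$ — for which $U\cap(K\wedge\mathbb{C}^{9})=0$, a finite linear-algebra verification of the same kind already invoked in Proposition \ref{vanish}. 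Semicontinuity then yields the vanishing for general $\Lambda$, hence $\dim\langle\Gamma\rangle=7$, and the finiteness of $W^1(\Gamma)$ follows from the complementary-dimension argument of the first paragraph.
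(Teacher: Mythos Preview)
Your route to $\dim\langle\Gamma\rangle=7$ is genuinely different from the paper's and is sound as far as it goes: the explicit Pl\"ucker parametrisation of the ruling curve of $S_{3,4}\subseteq\PP^8$, the identification $\langle\Gamma\rangle=\PP\bigl((\bigwedge^2L)(U)\bigr)$, and the reduction to checking $U\cap(K\wedge\mathbb C^9)=0$ for one admissible $\Lambda$ is a clean computational argument. The paper proceeds in the opposite order: it proves finiteness of $W^1(\Gamma)$ \emph{first}, by interpreting a point $p\in W^1(\Gamma)$ as a section $A$ of $R'$ whose $\pi_\Lambda$-image lies in a $3$-plane. For each degree $3\le d\le 6$ this becomes an incidence between the $(2d-6)$-dimensional family $\mathbf S_d=\{\mathbf I_{A'}:A'\in|(d-3)\ell-(d-4)E|\}$ and a Schubert cycle $\sigma^1_V$ in a Grassmannian of subspaces of $|H|$; Kleiman's transversality for general $V$ (equivalently, general $\Lambda$) then gives the finiteness. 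The paper's approach explains conceptually why $R'=S_{3,4}$ is the right hypothesis (it fails for $S_{a,7-a}$ with $a\ne 3$, as the sections of too-low degree spoil the count), whereas your approach would simply observe that the verification happens to succeed.

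The gap in your proposal is the step from $\dim\langle\Gamma\rangle=7$ back to finiteness of $W^1(\Gamma)$. You write $W^1(\Gamma)=\GG(3,5)\cap\langle\Gamma\rangle^{\perp}$ and observe that the expected dimension is zero, then assert that for general $f_R\in\mathcal U$ the intersection is transverse. But nothing you have proved forces this: $\langle\Gamma\rangle^{\perp}$ is not a general $\PP^6$ in $(\PP^{14})^{\vee}$, and the family $\mathcal U$ is far too small to invoke Kleiman or Bertini for linear sections of $\GG(3,5)$. The preceding lemma in the paper only gives the implication in the direction ``$W^1(\Gamma)$ finite $\Rightarrow\dim\langle\Gamma\rangle=7$'', not its converse, and the paper genuinely uses finiteness (not just the span) later in Lemma~\ref{scrolltypes}. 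Your argument can be repaired in the same spirit as your span check: for the same explicit $\Lambda$ at which you verify $U\cap(K\wedge\mathbb C^9)=0$, also compute $\GG(3,5)\cap\langle\Gamma\rangle^{\perp}$ directly and confirm it is zero-dimensional; upper semicontinuity of the fibre dimension then propagates finiteness to the general $\Lambda$. Without that second verification, the proof is incomplete.
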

\begin{proof} If $p \in W^1(\Gamma)$, then $\mathbf P^3_p$ contains an integral curve intersecting each line of $R$. Its strict transform by $\pi_{_{\Lambda}}:R'\rightarrow R$ is an
integral section  $A$ of the ruled surface $R'$. Set $d:=\deg(A)$, hence $A\equiv (d-3)\ell-(d-4)E\in \mbox{Pic}(\FF_1)$. Clearly $\langle A\rangle \subseteq \pi_{\Lambda}^{-1}(\PP^3_p)$,
implying  $\mbox{dim } \langle A\rangle \leq 6$.

\vskip 3pt

Let $\mathbf I_{A}:=|H-A|$ be the linear system of hyperplanes in $\PP^8$ containing the curve $A\subseteq R'$. By direct calculation, we find
$\dim(\mathbf I_A)$ $=$ $\dim \vert H - A \vert$ $=$ $7 - d\geq 1$
and $\dim |A|= 2d - 6$. It follows that $3\leq d\leq 6$. Recalling that $V=H^0(\PP^8, \mathcal{I}_{\Lambda/\PP^8}(1))$,  the condition
$$
\dim (\mathbf PV \cap \mathbf I_A) \geq 1
$$
is equivalent to the condition that the  curve $\pi_{_{\Lambda}}(A)$ be contained in a $3$-space $\mathbf P^3_p$. For $3 \leq d \leq 6$ let $\mathbf G(7-d,|H|)$ denote the Grassmannian of $(7-d)$-subspaces  of $|H|\cong \PP^8$ and introduce the $(2d-6)$-dimensional variety
$$
\mathbf{S}_d :=\Bigl\{\mathbf I_{A'}\in \GG(7-d, |H|): A' \in |(d-3)\ell-(d-4)E|\Bigr\}.
$$
For an integer $k \geq 1$, we consider the Schubert cycle
$$
\sigma^k_V := \bigl\{\mathbf I \in \mathbf G(7-d,|H|): \dim (\mathbf PV \cap \mathbf I) \geq k \bigr\}.
$$
The cycle $\sigma^k_V \cdot \mathbf{S}_d$ is finite for $k = 1$ and empty for $k \geq 2$, provided the intersection is proper. By Kleiman's transversality of a general translate this is true for a general translate of $\sigma^k_V$ in $\mathbf G(d-7,|H|)$, that is, for a general choice of $\Lambda$ (or equivalently, of $V$). Hence $W^1(\Gamma)$ is finite.
\end{proof}

\begin{remark} The theorem above fails for rational septic scrolls in $\mathbf P^8$ containing sections of degree $d\leq 2$, that is, for the scrolls $S_{a,7-a}$, where $a\neq 3$.
\end{remark}

We turn to the smooth residual rational curve $Q\subseteq \GG(1,5)$ defined by (\ref{linint}). Let
$$
R_Q \subseteq \PP^5
$$
be the quartic scroll whose rulings are parametrized by the curve $Q$.

\begin{lemma} $R_Q$ is a non-degenerate smooth rational normal scroll in $\mathbf P^5$. \end{lemma}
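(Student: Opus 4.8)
The plan is to show that $R_Q$ is smooth, non-degenerate, and of the expected numerical type by combining the intersection theory already computed on the $K3$ surface $T$ with the rulings-curve criterion of Proposition \ref{rulcurve}. By Lemma \ref{quarticzero}, the residual curve $Q$ is a smooth rational curve (it is one of the generators of $\mathrm{Pic}(T)$, hence smooth and irreducible by the argument following the construction of $T$) spanning $\langle Q\rangle=\PP^4$. Since $Q$ is a smooth rational curve of degree $4$ in $\GG(1,5)$, by the correspondence between rational scrolls and their curves of rulings recalled in Section 2, it is the curve of rulings of a quartic scroll $R_Q\subseteq\PP^5$; the non-degeneracy of $R_Q$ is then exactly the statement $\dim\langle Q\rangle=4$ from Lemma \ref{quarticzero}.

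The main content is the smoothness. First I would apply Proposition \ref{rulcurve}: since $Q$ is a smooth rational curve in $\GG(1,5)$ which is not contained in a plane (it spans a $\PP^4$), $R_Q$ is smooth precisely when $Q$ admits no bisecant line lying on $\GG(1,5)$. A bisecant line of $Q$ contained in $\GG(1,5)$ would correspond to a singularity of $R_Q$, i.e.\ two distinct rulings of $R_Q$ meeting in a point of $\PP^5$. I would rule this out using the intersection numbers on $T$ from Lemma \ref{intnumb}. A line $L\subseteq\GG(1,5)$ meeting $Q$ in two points would lie on the $K3$ surface $T=\GG(1,5)\cdot\langle T\rangle$ (being contained both in $\GG(1,5)$ and, via its span with $Q$, in $\langle T\rangle$), hence would define a $(-2)$-class in $\mathrm{Pic}(T)=\mathbb L$; but the analysis in Lemma \ref{veryample} shows that every $(-2)$-curve in $\mathbb L$ is one of $\Gamma, Q, L_1, L_2, L_3$, and none of these is a further bisecant of $Q$ disjoint from the decomposition \eqref{linint}. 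Since $Q\cdot L_i=1$ by Lemma \ref{intnumb}, each $L_i$ is a genuine \emph{unisecant} of $Q$, not a bisecant, so they do not produce singularities of $R_Q$.

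The step I expect to require the most care is verifying that $Q$ has no self-intersection forced by the three distinguished lines $L_i$ and hence that $R_Q\cong\FF_0$ as claimed in the surrounding text. Concretely, after excluding extraneous bisecant lines one must check that the numerical type of $R_Q$ is the balanced one: a smooth rational normal quartic scroll in $\PP^5$ is isomorphic to $\FF_0$ (the type $S_{2,2}$), as opposed to $\FF_2=S_{1,3}$, which is degenerate or singular. For this I would observe that the very ampleness established in Lemma \ref{veryample} forces $Q$ to span the full $\PP^4$ with no special section of negative degree, so the scroll cannot be of type $S_{1,3}$ or a cone; the balanced type $S_{2,2}\cong\FF_0$ is the only remaining possibility for a smooth non-degenerate quartic scroll in $\PP^5$. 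Thus $R_Q$ is a non-degenerate smooth rational normal scroll, completing the proof.
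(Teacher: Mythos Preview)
Your argument has two substantive gaps and a confusion of scope.

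\textbf{Non-degeneracy and the cone case.} The assertion that ``the non-degeneracy of $R_Q$ is exactly the statement $\dim\langle Q\rangle=4$'' is incorrect: $\langle Q\rangle$ is the span of $Q$ in the Pl\"ucker space $\PP^{14}$, not the span of the scroll in $\PP^5$. If $R_Q$ lay in a hyperplane $H\subseteq\PP^5$, then $Q$ would lie in $\GG_H:=\GG(1,H)\subseteq\PP^9$, where it can perfectly well span a $\PP^4$. The paper rules this out by a different mechanism: since $\deg\GG_H=5$ and its curvilinear sections have arithmetic genus $1$, one obtains $\langle Q\rangle\cdot\GG_H=Q+L$ with $L$ a line bisecant to $Q$; but then $L\subseteq\langle\Gamma\rangle\cap\GG(1,5)=C$, so $L\in\{L_1,L_2,L_3\}$, contradicting $Q\cdot L_i=1$. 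You also omit the cone case, which must be excluded before Proposition \ref{rulcurve} applies; the paper observes that if $R_Q$ were a cone with vertex $v$, then $\langle Q\rangle\cong\PP^4$ would be the Schubert space of lines through $v$, hence contained in $\GG(1,5)$ and therefore in the curve $C$, which is absurd.

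\textbf{Smoothness.} Your route via $(-2)$-curves on $T$ misquotes Lemma \ref{veryample}: that lemma does \emph{not} show that every $(-2)$-curve in $\mathbb L$ is one of $\Gamma,Q,L_1,L_2,L_3$; it only treats $(-2)$-classes orthogonal to $C$. The paper's argument is simpler and avoids $T$ entirely: a bisecant line $L$ to $Q$ lying on $\GG(1,5)$ satisfies $L\subseteq\langle Q\rangle\subseteq\langle\Gamma\rangle$, hence $L$ is a component of the \emph{curve} $C=\langle\Gamma\rangle\cdot\GG(1,5)$; the only line components of $C$ are $L_1,L_2,L_3$, and none is bisecant to $Q$.

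Finally, your last paragraph is both beyond the lemma (it does not assert $R_Q\cong S_{2,2}$; that is Theorem \ref{scrolltypes2}) and mistaken: $S_{1,3}$ is a smooth non-degenerate rational normal scroll in $\PP^5$, so smoothness and non-degeneracy alone do not force the balanced type. The paper distinguishes the two types only afterwards, in Lemma \ref{scrolltypes}, by showing $R_Q$ contains no lines other than its rulings.
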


\begin{proof} First, observe that  $R_Q$ cannot be a cone. Let us assume $R_Q$ is a cone of vertex $v\in \PP^5$. Then $\langle Q \rangle\cong \PP^4\subseteq \mathbf G(1,5)$  parametrizes  the lines passing through $v$. This is a contradiction because  $\langle Q \rangle \subseteq \langle \Gamma \rangle \cdot \mathbf G(1,5) = C$. Now assume that $R_Q$ is contained in a hyperplane $H\subseteq \PP^5$. Then $Q$ is contained in the
Grassmannian $\GG_H:=\GG(1,H)\subseteq \GG(1,5)$ of lines of  $H$. Since $K_{\GG_H}=\OO_{\GG_H}(-5)$, we observe that, by adjunction, the curvilinear sections of $\GG_H$ are curves of
arithmetic genus $1$. Because of this fact and since $\mbox{deg}(\GG_H)=5$,  it follows that
$$\langle Q \rangle \cdot \GG_H = Q + L \subseteq C,$$ where $L$ is a bisecant line to $Q$. But the only line components in $C$ are $L_1, L_2, L_3$ and none of them is bisecant to $Q$. Via Proposition \ref{rulcurve}, the same argument shows that the scroll $R_Q$ has no incident rulings, therefore $R_Q$ is smooth.
\end{proof}

\begin{lemma}\label{scrolltypes}
The scroll $R_Q$ contains no other lines except the ruling parametrized by $Q$.
\end{lemma}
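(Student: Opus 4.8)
The plan is to reduce the statement to excluding a single potential extra line and then to derive a contradiction from the geometry of $\GG(1,5)$. By the preceding lemma, $R_Q\subseteq\PP^5$ is a smooth, non-degenerate rational normal scroll of degree four which is not a cone, hence it is isomorphic either to $\FF_0=S_{2,2}$ or to $\FF_2=S_{1,3}$. In the first case the only lines lying on $R_Q$ are the rulings, whereas in the second there is exactly one additional line, the directrix $C_0$ (the unique $(-2)$-section), which meets every ruling of $R_Q$. Thus proving the lemma amounts to showing that $R_Q$ carries no line $\ell_0$ meeting all of its rulings; equivalently, that $R_Q\cong\FF_0$.

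First I would translate the existence of such a directrix $\ell_0$ into a statement on the Grassmannian. For each $q=[\ell_q]\in Q$ the ruling $\ell_q$ meets $\ell_0$ in a point $P_q$, and the pencil of lines through $P_q$ contained in the plane $\langle\ell_0,\ell_q\rangle$ is, by the description of lines on $\GG(1,5)$ recalled in the proof of Proposition \ref{rulcurve}, a line $M_q\subseteq\GG(1,5)$ passing through both $[\ell_0]$ and $q$. As $q$ varies over $Q$, the lines $M_q$ sweep out the cone $Y:=\mathrm{Cone}\bigl([\ell_0],Q\bigr)$ with vertex $[\ell_0]$ over the quartic $Q$; since $\ell_0$ is not a ruling we have $[\ell_0]\notin Q$, so $Y$ is a two-dimensional surface contained in $\GG(1,5)$ and singular at its vertex.

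Next I would play the vertex $[\ell_0]$ against the linear spans $\langle\Gamma\rangle=\PP^7$ and $\langle T\rangle=\PP^8$, recalling $Q\subseteq C=\langle\Gamma\rangle\cdot\GG(1,5)\subseteq T=\langle T\rangle\cdot\GG(1,5)$. If $[\ell_0]\in\langle\Gamma\rangle$, then each $M_q$, being spanned by the two points $[\ell_0],q\in\langle\Gamma\rangle$, lies in $\langle\Gamma\rangle$ and hence in $\langle\Gamma\rangle\cdot\GG(1,5)=C$; a line contained in the curve $C$ must be one of its irreducible components, but the only line components of $C$ are $L_1,L_2,L_3$, which cannot absorb the moving one-parameter family $\{M_q\}_{q\in Q}$, a contradiction. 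More generally, if $[\ell_0]\in\langle T\rangle$ then $[\ell_0]\in\GG(1,5)\cdot\langle T\rangle=T$, so the whole cone satisfies $Y\subseteq\GG(1,5)\cdot\langle T\rangle=T$; as $\dim Y=\dim T=2$ and $T$ is an irreducible $K3$ surface, this forces $Y=T$, impossible because $Y$ is singular at $[\ell_0]$ while $T$ is smooth.

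The remaining, and decisive, case is $[\ell_0]\notin\langle T\rangle$, which is exactly where the specific geometry of the construction must enter: a smooth quartic scroll of type $S_{1,3}$ does exist abstractly, so the conclusion can hold only because $Q$ is residual to $\Gamma+L_1+L_2+L_3$ for $\Gamma$ the ruling curve of a projection of $R'=S_{3,4}$. To handle it I would reformulate the existence of a directrix in the style of Theorem \ref{finiterulings}. Setting $W^1(Q):=\GG(3,5)\cap\langle Q\rangle^{\perp}=\{p\in\GG(3,5):Q\subseteq\sigma_p\}$ and using $\dim\langle Q\rangle=4$, the expected dimension of $W^1(Q)$ is $8+9-14=3$. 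On the other hand, a directrix $\ell_0$ meeting every ruling would place the entire four-dimensional family $\{p\in\GG(3,5):\ell_0\subseteq\PP^3_p\}$ inside $W^1(Q)$, forcing an excess component. The crux is therefore to show that, for a general choice of the $3$-secant plane $\Lambda$, the intersection $\GG(3,5)\cap\langle Q\rangle^{\perp}$ is dimensionally proper, of dimension exactly $3$; this I expect to be the main obstacle, and I would establish it by the same Kleiman-transversality argument used for $\Gamma$ in Theorem \ref{finiterulings}, now applied to the quartic $Q$ and crucially exploiting the hypothesis $R'=S_{3,4}$, thereby ruling out the excess and hence the directrix.
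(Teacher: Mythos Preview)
Your first two cases are fine, but you yourself flag the third case, $[\ell_0]\notin\langle T\rangle$, as the crux and then do not prove it. The proposed route---showing that $W^1(Q)=\GG(3,5)\cap\langle Q\rangle^{\perp}$ has the expected dimension $3$ via a Kleiman-transversality argument ``applied to the quartic $Q$''---is not a proof as it stands, and it is doubtful that it can be made into one along those lines. The curve $Q$ is defined \emph{residually} inside $\langle\Gamma\rangle\cdot\GG(1,5)$; it is not the image of a freely chosen map, and there is no moving family of translates of the relevant Schubert cycle that is manifestly general with respect to $Q$. In Theorem~\ref{finiterulings} the transversality worked because the plane $\Lambda$ (equivalently $V$) was the free parameter and the cycles $\mathbf{S}_d$ were fixed; for $Q$ you have no such separation.

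The paper bypasses this difficulty by reducing to the already-established Theorem~\ref{finiterulings} rather than proving a new transversality statement. If $\ell_0\subseteq R_Q$ is a non-ruling line, consider the $4$-dimensional family $G\cong\GG(1,3)\subseteq\GG(3,5)$ of $3$-planes $\PP^3_p$ containing $\ell_0$. Since $\ell_0$ meets every ruling of $R_Q$, each such $\PP^3_p$ meets every ruling, hence $Q\subseteq\sigma_p$ and so $G\subseteq\langle Q\rangle^{\perp}$. Now use $\langle Q\rangle\subseteq\langle\Gamma\rangle$, which gives $\langle\Gamma\rangle^{\perp}\subseteq\langle Q\rangle^{\perp}$. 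Inside the $9$-dimensional space $\langle Q\rangle^{\perp}$ the subvarieties $G$ and $\langle\Gamma\rangle^{\perp}$ have dimensions $4$ and $6$, so $\dim\bigl(G\cap\langle\Gamma\rangle^{\perp}\bigr)\geq 1$. But $G\cap\langle\Gamma\rangle^{\perp}\subseteq\GG(3,5)\cap\langle\Gamma\rangle^{\perp}=W^1(\Gamma)$, contradicting the finiteness of $W^1(\Gamma)$. No case analysis on the position of $[\ell_0]$ is needed, and no new transversality result for $Q$ is required.
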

\begin{proof}    Assume $R_Q$ contains a line $\ell_0$ not parametrized by a point of $Q$. We prove that this implies that $W^1(\Gamma)$ is not finite, thus contradicting Theorem \ref{finiterulings}.  Consider the family $G$ of codimension $1$ Schubert cycles $\sigma_p$ defined by a 3-space $\PP^3_p\supseteq \ell_0$. Note that $G \cong \GG(1,3)$. We have $G \subseteq \langle Q \rangle^{\perp}$. Since $\langle Q \rangle \subseteq \langle \Gamma \rangle$, we also have $\langle \Gamma \rangle^{\perp}\subseteq \langle Q \rangle^{\perp}$. Counting dimensions it follows $\dim (G \cap \langle \Gamma \rangle^{\perp}) \geq 1$,  which implies that $W^1(\Gamma)$ is not finite. \end{proof}

There are two types of smooth quartic scrolls in $\PP^5$, namely
$S_{1,3}=\mathbf P\bigl(\mathcal O_{\mathbf P^1}(1) \oplus \mathcal O_{\mathbf P^1}(3)\bigr)$ and $S_{2,2}= \PP\bigl(\mathcal O_{\mathbf P^1}(2) \oplus \mathcal O_{\mathbf P^1}(2)\bigr)$.
The latter case is characterized by the property that every line contained in the scroll is a ruling. Lemma \ref{scrolltypes} implies the following:

\begin{theorem}\label{scrolltypes2}
Let $\Gamma\subseteq \GG(1,5)$ be a smooth septic rational curve corresponding to a general element of $\mathcal U$ and $Q\subseteq \GG(1,5)$ the residual quartic curve. Then $R_Q$ is
isomorphic to $S_{2,2}$.
\end{theorem}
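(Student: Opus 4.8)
The plan is to deduce Theorem \ref{scrolltypes2} directly from Lemma \ref{scrolltypes}, combined with the elementary classification of lines lying on the two types of smooth quartic scrolls in $\PP^5$. The preceding lemma already establishes that $R_Q$ is a smooth rational normal scroll of degree $4$ in $\PP^5$, so abstractly $R_Q$ is isomorphic to either $S_{1,3}$ or $S_{2,2}$, and the only thing left is to exclude the first possibility. As noted in the remark immediately before the theorem, these two types are separated precisely by the presence or absence of a line on the scroll that is not a ruling.

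First I would record the line content of each type. Writing $S_{1,3}=\PP\bigl(\OO_{\PP^1}(1)\oplus \OO_{\PP^1}(3)\bigr)\cong \FF_2$, the summand $\OO_{\PP^1}(1)$ determines the negative section (the $(-2)$-curve on $\FF_2$), whose image under the tautological embedding $\OO_{\PP(\E)}(1)$ has degree $1$. Thus $S_{1,3}$ carries a distinguished directrix \emph{line} which is visibly not a ruling. By contrast, realizing $S_{2,2}\cong \PP^1\times \PP^1$ embedded by $\OO(2,1)$, a curve of bidegree $(m,n)$ has degree $2n+m$ under the hyperplane class, so the equation $2n+m=1$ forces $(m,n)=(1,0)$; hence the only lines on $S_{2,2}$ are the rulings, while the residual family in class $(0,1)$ consists of conics of degree $2$. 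This is exactly the characterization of $S_{2,2}$ quoted in the text.

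Then I would invoke Lemma \ref{scrolltypes}, which asserts that $R_Q$ contains no line other than the rulings parametrized by $Q$. Consequently $R_Q$ admits no non-ruling directrix line, which rules out $S_{1,3}$ and forces $R_Q\cong S_{2,2}$, completing the argument.

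The genuinely substantive input here is Lemma \ref{scrolltypes} itself, whose proof converts the existence of an extra line into the non-finiteness of $W^1(\Gamma)$, contradicting Theorem \ref{finiterulings}; once that lemma is in hand, the present step is purely a matter of recalling the geometry of quartic scrolls, so I expect no real obstacle at this final stage. If one preferred to avoid the explicit bidegree computation, the same conclusion follows from the scroll invariant: $S_{1,3}$ and $S_{2,2}$ are distinguished by the minimal self-intersection of a section on the corresponding Hirzebruch surface ($\FF_2$ versus $\FF_0$), and the decisive criterion is again the non-existence of a degree-one section on $R_Q$, which Lemma \ref{scrolltypes} supplies.
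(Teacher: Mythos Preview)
Your proposal is correct and follows exactly the paper's own argument: the paper simply notes that smooth quartic scrolls in $\PP^5$ are $S_{1,3}$ or $S_{2,2}$, the latter characterized by every line being a ruling, and then states that Lemma \ref{scrolltypes} immediately yields Theorem \ref{scrolltypes2}. Your added computations (the directrix line on $\FF_2$, the bidegree count on $\FF_0$) merely spell out what the paper leaves implicit.
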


To summarize, to a general rational curve $\Gamma=\Gamma_R \in \mathcal{U}$, we associated the quartic scroll $R_Q$, equipped with three rulings $\ell_1, \ell_2, \ell_3$ corresponding to
the points $L_i \cdot Q\in \GG(1,5)$, for $i=1,2,3$. Each ruling $\ell_i$ passes through the node $p_i$ of the scroll $R$ and is contained in the $2$-plane $P_i$ whose existence is
established in Proposition \ref{bisecant}.

\vskip 6pt

To prove the rationality of $\mathfrak{H}_{\mathrm{scr}}$ and thus that $\F_{14,1}$, we reverse this construction. We denote by $\mathcal{V}$  the variety classifying elements
$(R_Q, p_1, p_2, p_3)$, where $R_Q\subseteq \PP^5$ is a smooth quartic scroll isomorphic to $S_{2,2}$ and $p_i\in R_Q$ for $i=1,2,3$.

\begin{lemma}\label{stab}
The $PGL(6)$-stabilizer of a general point $(R_Q,p_1,p_2,p_3)\in \mathcal{V}$ is trivial. In particular, $PGL(6)$ acts transitively on $\mathcal{V}$.
\end{lemma}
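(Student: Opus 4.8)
The plan is to fix a single scroll $R_Q$ and reduce the computation to the action of its projective stabilizer on the three marked points. Since any two smooth quartic scrolls of type $S_{2,2}$ in $\PP^5$ are projectively equivalent (each is the embedding of $\FF_0=\PP^1\times \PP^1$ by the polarization $\OO(2,1)$), the group $PGL(6)$ already acts transitively on the locus of such scrolls. Hence the stabilizer of a point $(R_Q,p_1,p_2,p_3)$ is contained in the stabilizer $G$ of $R_Q$ itself, and it suffices to understand $G$ together with its action on triples of points of $R_Q$. I would first identify $G$ with the group $\Aut(\FF_0,\OO(2,1))$ of polarized automorphisms: every such automorphism acts linearly on $H^0(\FF_0,\OO(2,1))$ and thus lies in $PGL(6)$, and conversely. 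Because the two rulings of $R_Q$ are embedded with different degrees — the rulings of the scroll are lines, whereas the curves of the complementary system are conics — no automorphism can interchange the two factors. Consequently $G\cong PGL(2)\times PGL(2)$, a group of dimension $6$.

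The next step is to analyze the action of $G=PGL(2)\times PGL(2)$ on $R_Q^3=(\PP^1\times \PP^1)^3$. Writing $p_i=(s_i,t_i)$, the action is the diagonal one $(g,h)\cdot (s_i,t_i)=(g s_i, h t_i)$. For a general triple the base-coordinates $s_1,s_2,s_3$ are pairwise distinct, and likewise $t_1,t_2,t_3$. Here I would invoke the classical fact that $PGL(2)$ acts simply transitively on ordered triples of distinct points of $\PP^1$: there is a unique M\"obius transformation carrying one such triple to another, and the only one fixing three distinct points is the identity. Applying this in each factor, $G$ acts simply transitively on the open dense locus $U\subseteq R_Q^3$ of triples whose $s$-coordinates are distinct and whose $t$-coordinates are distinct; in particular the $G$-stabilizer of a general triple is trivial.

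Combining the two reductions yields both assertions. The $PGL(6)$-stabilizer of a general $(R_Q,p_1,p_2,p_3)\in \mathcal{V}$ coincides with the $G$-stabilizer of the corresponding triple, which is trivial; and since $PGL(6)$ is transitive on the scrolls and $G$ is transitive on $U$, the group $PGL(6)$ is transitive on the dense open subset of $\mathcal{V}$ lying over $U$. A dimension check is consistent: the space of $S_{2,2}$-scrolls is the single orbit $PGL(6)/G$ of dimension $35-6=29$, so $\mathcal{V}$ is irreducible of dimension $29+6=35=\dim PGL(6)$, forcing the generic orbit to be dense. The step requiring most care is the precise identification $G\cong PGL(2)\times PGL(2)$ — ruling out the factor-swapping automorphism of $\FF_0$ using the asymmetry of the polarization, and confirming that every polarized automorphism is induced by a linear transformation of $\PP^5$; once this is secured, the simple transitivity of $PGL(2)$ on triples of points does the rest.
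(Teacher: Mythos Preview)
Your proof is correct and follows essentially the same strategy as the paper: reduce to the projective stabilizer of a fixed $S_{2,2}$, identify it with (a subgroup of) $\Aut(\FF_0)$, and then use simple transitivity of $PGL(2)$ on ordered triples together with the dimension count $\dim\mathcal{V}=35=\dim PGL(6)$. The one noteworthy difference is your treatment of the factor-swapping involution $u\in\Aut(\FF_0)$: the paper includes $u$ in the discussion and rules it out by requiring the $p_i$ to lie off the diagonal, whereas you exclude $u$ from the $PGL(6)$-stabilizer of $R_Q$ at the outset by observing that the polarization $\OO_{\FF_0}(1,2)$ is asymmetric (the two rulings have degrees $1$ and $2$), which is a cleaner way to handle this point.
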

\begin{proof}
The automorphism group of $S_{2,2}\cong \FF_0$ is the semidirect
product of $PGL(2)\times PGL(2)$ with $\mathbb Z/2\mathbb Z$. The last factor corresponds to the automorphism $u\in \mbox{Aut}(\FF_0)$ permuting the two factors. In particular,
$\mbox{Aut}(S_{2,2})$ is $6$-dimensional. This implies that the space $\mathcal{V}$ has dimension
$$\mbox{dim } PGL(6)-\mbox{dim } \mbox{Aut}(S_{2,2})+3 \mbox{dim}(R_Q)=35= \mbox{dim } \ PGL(6).$$ Choose general points $p_i=(a_i, b_i)\in \FF_0\cong S_{2,2}$, with $a_i\neq b_i$,
for $i=1,2,3$. Up to the action of $u\in \mbox{Aut}(\FF_0)$, the stabilizer $\mbox{Stab}_{PGL(6)}(R_Q,p_1,p_2,p_3)$ corresponds to pairs of automorphism
$(\sigma_1,\sigma_2)\in PGL(2)\times PGL(2)$, such
that $\sigma_1(a_i)=a_i$ and $\sigma_2(b_i)=b_i$. Thus $\sigma_1=\sigma_2=1$. The points $p_i$ not lying on the diagonal of $\FF_0$, the automorphism $u$ does not fix any of them, thus
the stabilizer in question is trivial. Since $\mathcal{V}$ and $PGL(6)$ have the same dimension, this also implies the trasitivity of the $PGL(6)$-action on $\mathcal{V}$, as claimed.
\end{proof}

\vskip 3pt

We can thus start by fixing once and for all the quartic scroll $R_Q$. Precisely, we embed the surface $\FF_0:=\PP^1\times \PP^1$ in $\PP^5$ via the linear system $|\OO_{\FF_0}(1,2)|$ and denote by $$R_0\subseteq \PP^5$$ the image quartic scroll. The rulings on $R_0$ are the elements of the linear system $|\OO_{\FF_0}(0,1)|$. Let $Q_0\subseteq \GG(1,5)$  be the curve of rulings of $R_0$.
We then fix three points in $\FF_0$, for instance
$$
\mathpzc{o}_1:= \bigl([1:0],[0:1]\bigr), \ \mathpzc{o}_2:= \bigl([0:1],[1:0]\bigr) \ \mbox{ and } \mathpzc{o}_3:= \bigl([1:1],[-1:-1]\bigr),
$$
which we identify with their images in $R_0$.  As explained in Lemma \ref{stab}, the stabilizer subgroup $G$ of $PGL(6)$ fixing both $R_0$ as well as the \emph{set} $\{\mathpzc{o}_1, \mathpzc{o}_2, \mathpzc{o}_3\}$ is isomorphic
to the subgroup of $PGL(2)\times PGL(2)$ fixing the set $\{\mathpzc{o}_1, \mathpzc{o}_2, \mathpzc{o}_3\}$. Therefore $G = \mathfrak S_3$.

\vskip 3pt

For $i=1,2,3$, we denote by  $\ell_i$ the ruling of $R_0$ passing through the point $\mathpzc{o}_i$. Then, let $\PP_i^3$ be the projective space consisting of $2$-planes $\Pi_i \subseteq \mathbf P^5$ containing the line $\ell_i$. Giving a plane $\Pi_i$ is equivalent to specifying a  line $L_i\subseteq \GG(1,5)$  in the Pl\"ucker embedding of the Grassmannian.
Note that $L_i$ meets $Q_0$ transversally at precisely one point, namely $\ell_i\in \GG(1,5)$.

\vskip 5pt

We introduce a rational map
$$
\varkappa: \mathbf P^3_1 \times \mathbf P^3_2 \times \mathbf P^3_3/\mathfrak{S}_3  \dasharrow \mathfrak{H}_{\mathrm{scr}}
$$
defined as follows. To a triple of planes $(\Pi_1,\Pi_2,\Pi_3)$, we attach the lines $L_1, L_2, L_3\subseteq \GG(1,5)$. Since $Q_0\subseteq \GG(1,5)$ is a smooth rational quartic curve,
in the Pl\"ucker embedding we have that $\langle Q_0\rangle \cong \PP^4$. Attaching one general $1$-secant line to $Q_0$ increases the dimension of the linear span of the union by one,
therefore by attaching three general $1$-secant lines, we have
 $$\langle Q_0+L_1+L_2+L_3\rangle \cong \PP^7 \subseteq \PP^{14}.$$  We write
 $$\langle Q_0+L_1+L_2+L_3\rangle \cdot \GG(1,5)=Q_0+L_1+L_2+L_3+\Gamma,$$
 where $\Gamma$ is a degree $7$ curve. Applying Lemma \ref{intnumb}, it follows that $\Gamma$ is a rational curve and $\Gamma\cdot L_i=2$, for $i=1,2,3$. We denote by $\ell_i'$ and $\ell_i''$ the intersection points $L_i\cdot \Gamma$. From Proposition \ref{bisecant} it follows that
 that the scroll $R:=R_{\Gamma}$ induced by $\Gamma$ is $3$-nodal, with nodes given by the intersection $\ell_i'\cap \ell_i''$ taken in the $2$-plane $\Pi_i$. We set
 $$\varkappa(\Pi_1+\Pi_2+ \Pi_3):=[R].$$

\vskip 3pt

We conclude the proof of the rationality of the Hilbert scheme of $3$-nodal scrolls in $\PP^5$:

\vskip 5pt

\noindent \emph{Proof of Theorem \ref{rational2}.}
We first observe that $\varkappa$ is well-defined. To that end, we choose the polarized $K3$ surface $(T, \OO_T(C))$ constructed in Propositions
\ref{veryample} and \ref{grassm} and we keep the notation used there. Applying Theorem \ref{scrolltypes2}, the residual quartic rational curve $Q\subseteq \GG(1,5)$ parametrizes the rulings
of a quartic scroll $R_Q\subseteq \PP^5$, which is isomorphic to $S_{2,2}$. Applying Lemma \ref{stab}, there exists a unique automorphisms $\sigma\in PGL(6)$ such that $\sigma(R_Q)=R_0$ and
$\sigma(p_i)=\mathpzc{o}_i$, for $i=1,2,3$. Set $\sigma(P_i)=:\Pi_i\in \PP^3_i$ and then $\varkappa(\Pi_1+\Pi_2+\Pi_3)=[R_{\Gamma}]$.

\vskip 3pt

To finish the proof it suffices to observe that $\varkappa$ is generically injective. A general septic curve $\Gamma \in \mathcal{U}$  corresponding to a $3$-nodal septic scroll
$[R_{\Gamma}]\in \mathfrak{H}_{\mathrm{scr}}$ has precisely $3$ bisecant lines lying in
$\GG(1,5)$. Giving $\Gamma$ determines its linear span $\langle \Gamma \rangle$, hence the set $\{L_1, L_2, L_3\}$ as well.
\hfill $\Box$

\section{The unirationality of the universal $K3$ surface of genus at most $12$}

We denote by $\F_{g,n}$ the universal $n$-pointed $K3$ surface of genus $g$. Thus $\F_{g,n}$ is an irreducible variety of dimension $19+2n$. Similarly, one can consider the
universal Hilbert scheme of $0$-dimensional cycles of length $n$, that is, $u^{[n]}:\F_{g}^{[n]}\rightarrow \F_g$. We also introduce the notation $\cC_{g,n}:=\cM_{g,n}/\mathfrak{S}_n$ for the degree $n$ universal symmetric product over $\cM_g$, where the symmetric group $\mathfrak{S}_n$ acts by permuting the marked points.

\vskip 3pt

The aim of this short last section is to point out how Mukai's results
determine the birational type of $\F_{g,n}$ and that of $\F_{g}^{[n]}$ for small $g$, and thus put our Theorem \ref{main1} better into context:

\begin{theorem}\label{kodaira} The following results on the Kodaira dimension of $\F_{g,n}$ hold:
 \begin{enumerate}
 \item $\F_{g,g+1}$ is unirational for $g\leq 10$.
 \item $\F_{11,1}$ is unirational. The Kodaira dimension of both $\F_{11,11}$ and $\F_{11}^{[11]}$ equals $19$.
 \end{enumerate}
\end{theorem}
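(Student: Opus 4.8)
The plan is to treat both parts by the same philosophy---pushing the marked points out to a rational ambient model---but with genuinely different inputs.

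For part (1) I would run, uniformly in $2\le g\le 10$, the following construction. By Mukai's structure theorems the general polarized $K3$ surface $(S,H)$ of genus $g$ is recovered from a fixed rational model $\Sigma_g\subseteq \PP^{N_g}$: for $g=7,8,9,10$ the surface is a codimension $(\dim\Sigma_g-2)$ linear section of a homogeneous variety $\Sigma_g$ (the spinor tenfold, $\GG(2,6)$, the Lagrangian Grassmannian, and the $G_2$-variety respectively), while for $g\le 6$ one uses the classical complete-intersection or double-cover models ($\Sigma_g=\PP^g$, or $\GG(2,5)$ for $g=6$). The key observation is that $g+1$ general points of $\Sigma_g$ span exactly the linear space realizing the linear-section part of the model, so that the $K3$ surface $S:=\Sigma_g\cap\langle p_0,\dots,p_g\rangle$ they cut out automatically contains them; any further defining data (the quadrics, resp. cubics, resp. the sextic branch curve) is then taken in the subsystem vanishing at the $p_i$. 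The resulting parameter space---a product of copies of the rational variety $\Sigma_g$ with Grassmannians of equations---is rational and carries a natural rational map to $\F_{g,g+1}$ whose general fibre is a single $\Aut(\Sigma_g)$-orbit. I would establish dominance through the dimension count
$$\dim\bigl(\Sigma_g^{\,g+1}\times(\mbox{equations})\bigr)-\dim\Aut(\Sigma_g)=2g+21=\dim\F_{g,g+1},$$
which, checked case by case, comes out to an equality; hence the map is generically finite, whence dominant, and $\F_{g,g+1}$ is unirational.

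For part (2) the engine is Mukai's theorem that a general canonical curve $C$ of genus $11$ lies on a \emph{unique} $K3$ surface. A hyperplane section of a genus $11$ surface $S\subseteq\PP^{11}$ is a canonical genus $11$ curve, so this uniqueness identifies the universal pointed hyperplane section $\bigl\{(S,C,p):C\in|H|,\ p\in C\bigr\}$ birationally with $\cM_{11,1}$. Since $\cM_{11,1}$ is unirational, so is this space; projecting $(S,C,p)\mapsto(S,p)$, with fibres $|H-p|\cong\PP^{10}$, dominates $\F_{11,1}$, which is therefore unirational.

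For the Kodaira-dimension statement I would use the relative-canonical formula. The fibres $S^{11}$ of $u_{11}:\F_{11,11}\to\F_{11}$ and $S^{[11]}$ of $u^{[11]}:\F_{11}^{[11]}\to\F_{11}$ are holomorphic-symplectic, hence have trivial canonical class, and the holomorphic $2$-form varies as a section of the Hodge line bundle $\lambda$; this yields $\omega_{\F_{11,11}/\F_{11}}=u_{11}^*\lambda^{\otimes 11}$ and $\omega_{\F_{11}^{[11]}/\F_{11}}=(u^{[11]})^*\lambda^{\otimes 11}$, the $n$-point refinement of Remark~\ref{19}. Consequently $K_{\F_{11,11}}=u_{11}^*(K_{\F_{11}}+11\lambda)$ and $K_{\F_{11}^{[11]}}=(u^{[11]})^*(K_{\F_{11}}+11\lambda)$, so both Kodaira dimensions equal $\kappa\bigl(\F_{11},\,K_{\F_{11}}+11\lambda\bigr)$---which is exactly why the two numbers coincide. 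It then remains to show this Iitaka dimension is maximal, equal to $19$: since $\lambda$ is big and nef (ample on the Baily--Borel compactification), I would express $K_{\F_{11}}+11\lambda$ as a positive multiple of $\lambda$ minus an effective boundary/branch class and verify that at the threshold $n=11$ the $\lambda$-positivity outweighs the effective part, so that the class becomes big. Equivalently---and this is the form best suited to Mukai's theorem---the correspondence ``$11$ general points of $S$ span the hyperplane cutting out $C$'' extends the above to birational isomorphisms $\F_{11,11}\sim\cM_{11,11}$ and $\F_{11}^{[11]}\sim\cM_{11,11}/\mathfrak{S}_{11}$, both of dimension $41$, reducing the claim to $\kappa(\cM_{11,11})=19$.

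The main obstacle is precisely this last point. Unirationality in (1) and in the first half of (2) is essentially bookkeeping once Mukai's models are in hand, but pinning down $\kappa\bigl(\F_{11},K_{\F_{11}}+11\lambda\bigr)=19$---equivalently $\kappa(\cM_{11,11})=19$---requires the exact class of $K_{\F_{11}}$ together with enough control of the effective cone to certify bigness at $n=11$ and not before. That divisor-theoretic computation is the delicate heart of the statement.
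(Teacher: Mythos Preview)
Your treatment of part (1) is the paper's argument: both use Mukai's homogeneous models $V_g\subseteq\PP^{N_g}$ and the map $V_g^{g+1}\dashrightarrow\F_{g,g+1}$ sending $(x_0,\dots,x_g)$ to $\bigl[V_g\cap\langle x_0,\dots,x_g\rangle,\,x_0,\dots,x_g\bigr]$. For the Kodaira-dimension half of (2) you also land on the paper's reduction, namely the birational isomorphisms $\F_{11,11}\cong\cM_{11,11}$ and $\F_{11}^{[11]}\cong\cC_{11,11}$ coming from ``eleven general points of $S$ span the hyperplane cutting out $C$''. Your relative-canonical-bundle computation is a detour you never actually use (and a minor slip: $S^{11}$ has trivial canonical class but is not irreducible holomorphic symplectic). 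The step you flag as the ``delicate heart'', $\kappa(\cM_{11,11})=\kappa(\cC_{11,11})=19$, is not re-proved here at all; the paper simply quotes \cite{FV1}, Theorem~0.5. No divisor computation on $\F_{11}$ is needed.

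The genuine gap is your argument for the unirationality of $\F_{11,1}$. You assert that $\cM_{11,1}$ is unirational and then project down, but you give no independent proof of that assertion, and the most tempting route to it---that $\cM_{11,1}$ is a $\PP^{10}$-bundle over $\F_{11,1}$ via Mukai's uniqueness---is exactly the circularity the paper warns against in the Remark following the theorem. The paper avoids this entirely by a different construction: it builds a dominant map $\vartheta:\cM_{10,2}\dashrightarrow\F_{11,1}$ sending $[C,p_1,p_2]$ to the unique $K3$ surface containing the $1$-nodal genus-$11$ curve $C/(p_1\!\sim\!p_2)$ (Mukai's construction extends to such curves by \cite{CLM}), marked at the node. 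Since the unirationality of $\cM_{10,2}$ is established in \cite{BCF}, this closes the argument without ever invoking $\cM_{11,1}$. That nodal-curve idea is the piece your proposal is missing.
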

\begin{proof}

For $g\leq 5$, the general $K3$ surface of genus $g$ is a complete intersection in a projective space and the result follows easily. For details,  see the
table after Theorem 1.10  in \cite{M7}.

For $6\leq g\leq 10$, Mukai \cite{M1} has constructed a rational homogeneous variety
$V_g\subseteq \PP^{N_g}$, where $N_g=g+\mbox{dim}(V_g)-2$, such that the general $K3$ surface of genus $g$ is obtained as a general linear section
$S=V_g\cap \Lambda_g$, where $\Lambda_g\subseteq \PP^{N_g}$ is a $g$-dimensional plane, with the polarization being the one induced by $\OO_{\PP^{N_g}}(1)$. Moreover, one has the following  birational isomorphism, see \cite{M1} Corollary 0.3:
$$\F_g\stackrel{\cong}\dashrightarrow \GG(g,N_g)/ \mbox{Aut}(V_g).$$
These results imply the existence of a dominant map $\chi_g:V_g^{g+1}\dashrightarrow \F_{g,g+1}$ given by
$$\chi(x_1,\ldots,x_{g+1}):=\bigl[V_g\cap \langle x_1,\ldots, x_{g+1}\rangle, x_1, \ldots, x_{g+1}\bigr].$$ This proves that $\F_{g,g+1}$ (and hence $\F_{g,n}$ for $n\leq g+1$) is unirational in this range.

\vskip 3pt


For $g=11$, we use \cite{M8}, where it is shown that  a general curve $[C]\in \cM_{11}$ lies on a \emph{unique} $K3$ surface $C\subseteq S$ as a hyperplane section, with $\mbox{Pic}(S)=\mathbb Z\cdot C$. This implies the existence of a rational map
$\chi_{n}:\cM_{11,n}\dashrightarrow \F_{11,n}$ defined by $$\chi_{n}([C,x_1,\ldots,x_{n}]):=[S,x_1,\ldots, x_{n}].$$ The map $\chi_n$ is dominant for $n\leq 11$ and a birational isomorphism
for $n=11$. Indeed, in this last case, given an embedded $K3$ surface $S\stackrel{|H|}\hookrightarrow \PP^{11}$ and general points $x_1, \ldots,x_{11}\in S$, the hyperplane $\langle x_1,\ldots,x_{11}\rangle\cong \PP^{10}$ cuts out a canonical genus $11$ curve $C$ on $S$, which comes equipped with the marked points $x_1, \ldots, x_{11}$. By quotienting the action of the symmetric group $\mathfrak{S}_{11}$, the map $\chi_{11}$ induces a birational isomorphism between the universal symmetric product $\cC_{11,11}$ and $\F_{11}^{[11]}$. Now we use \cite{FV1} Theorem 0.5. Both varieties $\cM_{11,11}$ and $\cC_{11,11}$ have Kodaira dimension $19$, hence we conclude.

\vskip 4pt

We now pass on to the universal $K3$ surface $\F_{11,1}$. To that end we define a rational map
$$\vartheta:\cM_{10,2}\dashrightarrow \F_{11,1},$$ associating to a $2$-pointed curve
$[C,p_1,p_2]\in \cM_{10,2}$, the unique $K3$ surface $S$ of genus $11$ containing the curve $[X:=C/p_1\sim p_2]$ obtained from $C$ by identifying $p_1$ and $p_2$. To show that $\vartheta$ is well-defined, that is, Mukai's construction \cite{M8} can be also carried out for the $1$-nodal curve $[X]\in \mm_{11}$, we use \cite{CLM} Proposition 4.4. Observe that the $K3$ surface $S$ has a distinguished point corresponding to the image of the singularity of $X$.  The map $\vartheta$ is clearly dominant, for in each linear system on a $K3$ surface, the $1$-nodal curves fill-up a divisor. The unirationality of $\F_{11,1}$ now follows from that of $\cM_{10,2}$, which can be established in a variety of ways, see for instance \cite{BCF} Theorem B.

\end{proof}

\begin{remark}
It is claimed incorrectly in \cite{L} Table 3, that $\cM_{11,n}$ is unirational for $n\leq 10$. The argument sketched in \emph{loc.cit.} only establishes the uniruledness of $\cM_{11,n}$ when $n\leq 10$, precisely using the map $\chi_n:\cM_{11,n}\rightarrow \F_{11,n}$, which is birationally a $\PP^{11-n}$-bundle. But this argument alone offers no indications concerning the birational nature of the base variety $\F_{11,n}$. One can establish partial results on the birational nature of $\F_{11,n}$, for $n\leq 10$. For instance,
it is shown in \cite{Ve} that the universal product $\cC_{11,6}$ is unirational, which implies that $\F_{11}^{[6]}$ is unirational as well.
\end{remark}

\begin{remark} Mukai \cite{M4} gives an explicit orbit space realization over a projective space for the universal $K3$ surface $\F_{13,1}$. The unirationality of $\F_{13,1}$ thus follows. Presumably, a similar argument works for genus $12$, when $\F_{12}$ is known to be birational to a $\PP^{13}$-bundle over the rational moduli space $\mathcal{MF}_{22}$ of Fano $3$-folds $V_{22}\subseteq \PP^{13}$, see again \cite{M1}.
\end{remark}

\begin{remark}\label{19}
Since $u:\F_{g,1}\rightarrow \F_g$ is a morphism fibred in Calabi-Yau varieties, by Iitaka's easy addition formula $\kappa(\F_{g,1})\leq \mbox{dim}(\F_g)=19$, in particular,
$\F_{g,1}$ is never of general type. Furthermore, by \cite{K}, we also write $\kappa(\F_{g,1})\geq \kappa(\F_g)$. In particular, when $\F_g$ is of general type, then $\kappa(\F_{g,1})=19$.
\end{remark}


\begin{thebibliography}{aaaaa}
\bibitem[BCF]{BCF} E. Ballico, G. Casnati and C. Fontanari, {\em{On the birational geometry of moduli spaces of pointed curves}}, Forum Math. \textbf{21} (2009), 935--950.
\bibitem[BM]{BM} A. Bayer and E. Macri, {\em{MMP for moduli of sheaves on $K3$s via wall-crossing: nef and movable cones, Lagrangian fibrations}},
Inventiones Mathematicae \textbf{198} (2014), 505--590.
\bibitem[BD]{BD} A. Beauville and R. Donagi, {\em{La vari\'et\'e des droites d'une hypersurface cubique de dimension $4$}}, C.R. Acad. Sci. Paris Ser I Math,  \textbf{301} (1985), 703--706.
\bibitem[BR]{BR} M. Bolognesi and F. Russo, {\em{Some loci of rational cubic fourfolds}}, arXiv:1504.05863.
\bibitem[CJ]{CJ} M. Catalano-Johnson, {\em{Possible dimensions of higher secant varieties}}, American Journal of Mathematics \textbf{118} (1996), 355--361.
\bibitem[CLM]{CLM} C. Ciliberto, A. Lopez and R. Miranda, {\em{Projective degenerations of $K3$ Surfaces, Gaussian maps, and Fano threefolds}}, Inventiones Mathematicae \textbf{114}, (1993) 641--667.
\bibitem[CC]{CC} L. Chiantini and C. Ciliberto, {\em{Weakly deffective varieties}}, Transactions of the American Mathematical Society \textbf{354} (2001), 151--178.
\bibitem[Dol]{Dol} I. Dolgachev, {\em{Classical algebraic geometry: a modern view}}, Cambridge University Press 2013.
\bibitem[Ei]{Ei} D. Eisenbud, {\em{ Linear sections of determinantal varieties}}, American Journal of Mathematics \textbf{110} (1988), 541--575.
\bibitem[Fa]{Fa} G. Fano, {\em{Sulle forme cubiche dello spazio a cinque dimensioni contenenti rigate razionali del $4^°$ ordine}}, Commentarii Math. Helvetici \textbf{15} (1943), 71--80.
\bibitem[FV1]{FV1} G. Farkas and A. Verra, {\em{The classification of universal Jacobians over the moduli space of curves}}, Commentarii Math. Helvetici \textbf{88} (2013), 587--611.
\bibitem[FV2]{FV2} G. Farkas and A. Verra, {\em{The geometry of the moduli space of odd spin curves}}, Annals of Mathematics \textbf{180} (2014), 927--970.
\bibitem[Ful]{Ful} W. Fulton, {\em{Intersection theory}}, Ergebnisse der Mathematik und ihrer Grenzgebiete \textbf{2}, Springer-Verlag, Berlin 2nd edition 1998.
\bibitem[GKZ]{GKZ} I. Gelfand, M. Kapranov and A. Zelevinsky, {\em{Discriminants, resultants and multidimensional determinants}}, Birkh\"auser 1994.
\bibitem[GLT]{GLT} F. Greer, Z. Li and Z. Tian, {\em{Picard groups on moduli of $K3$ surfaces with Mukai models}}, International Mathematical Research Notices Vol. 2015, No. 16, 7238--7257.
\bibitem[GHS]{GHS} V. Gritsenko, K. Hulek and G.K. Sankaran, {\em{The Kodaira dimension of the moduli space of $K3$ surfaces}}, Inventiones Mathematicae \textbf{169} (2007), 519--567.
\bibitem[Ha]{Ha} J. Harris, {\em{Algebraic geometry: A first course}}, Graduate Texts in Mathematics Vol. 133, Springer-Verlag 1992.
\bibitem[H1]{H1} B. Hassett, {\em{Special cubic fourfolds}},  Compositio Mathematica \textbf{120} (2000) 1--23.
\bibitem[H2]{H2} B. Hassett, {\em{Some rational cubic fourfolds}}, Journal of Algebraic Geometry \textbf{8} (1999), 103--114.
\bibitem[HT1]{HT1} B. Hassett and Y.Tschinkel, {\em{Intersection numbers of extremal rays on holomorphic symplectic varieties}}, Asian Journal of  Mathematics \textbf{14} (2010), 303--322.
\bibitem[HT2]{HT} B. Hassett and Y. Tschinkel, {\em{Rational curves on holomorphic symplectic fourfolds}}, Geometry and Functional Analysis \textbf{11} (2001), 1201--1228.
\bibitem[K]{K} Y. Kawamata, {\em{The Kodaira dimension of certain fibre spaces}}, Proc. Japan Academy, \textbf{5} (1979), 406--408.
\bibitem[L]{L} K.-W. Lai, {\em{New cubic fourfolds with odd degree unirational parametrizations}}, arXiv:1606.03853.
\bibitem[Lo]{Lo} A. Logan, {\em{The Kodaira dimension of moduli spaces of curves with marked points}}, American Journal of Mathematics, \textbf{125} (2003), 105--138.
\bibitem[Ma]{Ma} A. Mattuck, {\em{The field of multisymmetric functions}}, Proceedings of the American Mathematical Society \textbf{19} (1968), 764--765.
\bibitem[Mo]{Mo} D. Morrison, {\em{On $K3$ surfaces with large Picard number}}, Inventiones Mathematicae \textbf{75} (1984), 105--121.
\bibitem[M1]{M1} S. Mukai, {\em{Curves, $K3$ surfaces and Fano $3$-folds of genus $\leq 10$}}, in: Algebraic Geometry and Commutative Algebra in Honor of M. Nagata,
357-377, Kinokuniya, Tokyo, 1988.
\bibitem[M2]{M2} S. Mukai, {\em{Curves and symmetric spaces II}}, Annals of Mathematics \textbf{172} (2010), 1539-1558.
\bibitem[M3]{M3} S. Mukai, {\em{Polarized $K3$ surfaces of genus $18$ and $20$}}, in: Complex Projective Geometry, London Math. Soc. Lecture Notes Series \textbf{179}, 264-276, Cambridge University Press 1992.
\bibitem[M4]{M4} S. Mukai, {\em{Polarised $K3$ surfaces of genus $13$}}, in: Moduli Spaces and Arithmetic Geometry (Kyoto 2004), Advanced Studies in Pure Mathematics \textbf{45}, 315-326, 2006.
\bibitem[M5]{M5} S. Mukai, {\em{$K3$ surfaces of genus $16$}}, RIMS preprint 1743, 2012,  available at http://www.kurims.kyoto-u.ac.jp/preprint/file/RIMS1743.pdf.
\bibitem[M6]{M6} S. Mukai, {\em{Curves and Grassmannians}}, in: Algebraic Geometry and Related Topics (1992), eds. J.-H. Yang, Y. Namikawa, K. Ueno, 19-40.
\bibitem[M7]{M7} S. Mukai, {\em{New developments in the theory of Fano threefolds: vector bundle method and moduli problems}},  translation of Sugaku \textbf{47} (1995), 125--144.
\bibitem[M8]{M8} S. Mukai, {\em{Curves and $K3$ surfaces of genus eleven}}, in: Moduli of vector bundles, Lecture Notes in Pure and Applied Mathematics Vol. 179, Dekker (1996), 189--197.
\bibitem[Nu]{Nu} H. Nuer, {\em{Unirationality of moduli spaces of special cubic fourfolds and $K3$ surfaces}}, arXiv:1503.05256, to appear in Algebraic Geometry.
\bibitem[R]{R} I. Reider, {\em{Vector bundles of rank $2$ and linear systems on algebraic surfaces}}, Annals of Mathematics \textbf{127}(1988), 309--316.
\bibitem[SD]{SD} B. Saint-Donat, {\em{Projective models of $K3$ surfaces}}, American Journal of Mathematics \textbf{96} (1974), 602--639.
\bibitem[TVA]{TVA} S. Tanimoto and A. V\'arilly-Alvarado, {\em{Kodaira dimension of special cubic fourfolds}}, arxiv:1509.01562.
\bibitem[Ve1]{Ve} A. Verra, {\em{The unirationality of the moduli space of curves of genus 14 and lower}}, Compositio Mathematica \textbf{141} (2005), 1425--1444.
\bibitem[Ve2]{Ve2} A. Verra, {\emph{A short proof of the unirationality
of $\cA_5$}}, Indagationes Math. \textbf{46} (1984), 339--355.
\bibitem[V]{V} C. Voisin, {\em{Th\'eor\`eme de Torelli pour les cubiques de $\PP^5$}}, Inventiones Mathematicae \textbf{86} (1986), 577--601.
\end{thebibliography}
\end{document}